\newtheorem{theorem}{Theorem}[section]
\newtheorem{lemma}[theorem]{Lemma}
\newtheorem{prop}[theorem]{Proposition}
\newtheorem{coro}[theorem]{Corollary}
\newtheorem{conj}[theorem]{Conjecture}
\theoremstyle{definition}
\newtheorem{remark}[theorem]{Remark}
\newcommand{\NN}{\mathbb{N}}
\newcommand{\Ec}{\mathcal{E}}
\newcommand{\setdef}{\ : \ }
\newcommand{\vep}{\varepsilon}
\newcommand{\psh}{{\rm PSH}}
\newcommand{\PSH}{{\rm PSH}}
\newcommand{\ddbar}{\partial\bar\partial}
\newcommand{\AM}{{I}}
\newcommand{\id}{\mathbbm{1}}
\newcommand{\dS}{d_{\mathcal {S}}}
\title{\vspace{-0.3in} The metric geometry of singularity types }
\author{Tam\'as Darvas, Eleonora Di Nezza, Chinh H. Lu}
\date{\vspace{-0.5cm}}
\begin{document}

\maketitle

\begin{abstract} Let $X$ be a compact K\"ahler manifold. Given a big cohomology class $\{\theta\}$, there is a natural equivalence relation on the space of $\theta$-psh functions giving rise to $\mathcal S(X,\theta)$, the space of singularity types of potentials. We introduce a natural pseudometric $d_{\mathcal {S}}$ on $\mathcal S(X,\theta)$ that is non-degenerate on the space of model singularity types and whose atoms are exactly the relative full mass classes. In the presence of positive mass we show that this metric space is complete. As applications, we show that solutions to a family of complex Monge-Amp\`ere equations with varying singularity type converge as governed by the $d_\mathcal S$-topology, and we obtain a semicontinuity result for multiplier ideal sheaves associated to singularity types, extending the scope of previous results from the local context.
\end{abstract}

\section{Introduction and main results}

Let $(X,\omega)$ be a K\"ahler manifold with a smooth closed $(1,1)$-form $\theta$. Let $\textup{PSH}(X,\theta)$ be the collection of integrable $\theta$-psh functions on $X$. With slight abuse of precision $u \in \textup{PSH}(X,\theta)$ if and only if $u$ is upper semi-continuous (usc), $u\in L^1(X,\omega^n)$,  and $\theta_u:=\theta + i\ddbar u \geq 0$ in the sense of currents. The set $\textup{PSH}(X,\theta)$ has plenty of members when $\{\theta\} \in H^2(X,\mathbb{C})$ is big, an assumption we will make throughout the paper.

Two potentials $u,v \in \textup{PSH}(X,\theta)$ have the same singularity type if and only if there exists $C \in \mathbb{R}$ such that $u - C \leq v \leq u + C$. This is easily seen to yield an equivalence relation, whose equivalence classes $[w],w \in \textup{PSH}(X,\theta)$ give rise to the space of singularity types $\mathcal S(X,\theta)$. This latter space plays an important role in transcendental algebraic geometry, as its elements represent the building blocks of multiplier ideal sheaves, log-canonical thresholds, etc., bridging the gap between the algebraic and the analytic viewpoint on the subject. We refer to the survey \cite{Dem15}  and references therein for insight into this ever expanding circle of ideas.

The space $\textup{PSH}(X,\theta)$ has a natural complete metric space structure given by the $L^1$ metric. However the $L^1$ metric does not naturally descend to $\mathcal S(X,\theta)$ making the study of variation of singularity type quite awkward and cumbersome. Indeed, reviewing the literature, ``convergence of singularity types'' is only discussed in an ad-hoc manner, under stringent conditions on the potentials involved. 

On the other hand, ``approximating" an arbitrary singularity type $[u]$ with one that is much nicer goes back to the beginnings of the subject. Perhaps the most popular of these approximation procedures is the one that uses Bergman kernels, as first advocated in this context by Demailly \cite{Dem92}. Here, using  Ohsawa-Takegoshi type theorems one obtains a (mostly decreasing) sequence  $[u_{j,B}]$ that in favorable circumstances approaches $[u]$ in the sense that multiplier ideal sheaves, log-canonical thresholds, vanishing theorems, intersection numbers etc. can be recovered in the limit (see for example \cite{Bo02, Bo04, DP04, Ca14, Dem15} and references therein). Still, no metric topology seems to be known that could  quantify the effectiveness or failure of the ``convergence" $[u_{j,B}] \to [u]$ (or that of other approximating sequences, for example the transcendental Bergman kernels suggested in \cite{Ber18}). In this work we propose an alternative remedy to this.

We introduce a natural (pseudo)metric $d_{\mathcal S}$ on ${\mathcal S}(X,\theta)$ and point out that it fits well with some already existing approaches in the literature. The precise definition of $d_\mathcal S$ uses the language of geodesic rays from \cite{DDL3,DL18} and is delayed until Section 3, however for the sake of a gentle introduction we note that there exists an absolute constant $C >1$ only dependent on $\dim_{\mathbb{C}}X$ such that:
$$d_\mathcal S([u],[v]) \leq   \sum_{j=0}^n \bigg(2\int_X \theta_{V_\theta}^j \wedge \theta_{\max(u,v)}^{n-j}-  \int_X \theta_{V_\theta}^j \wedge \theta_{v}^{n-j}-\int_X \theta_{V_\theta}^j \wedge \theta_{u}^{n-j}\bigg)\leq C  d_\mathcal S([u],[v]).$$
This is proved in Proposition \ref{prop: poor_Pythagorean_S}. Here $V_\theta$ is the least singular potential of $\textup{PSH}(X,\theta)$ and the integration is carried out over the respective non-pluripolar products introduced in \cite{BEGZ10}. Also, by \cite{WN19} we have that the expression in the middle is indeed non-negative, and as a result of $d_\mathcal S$ being a pseudo-metric, this expression will also satisfy the quasi-triangle inequality!

As we will see in Theorem \ref{criteria d_1 vanishing} below, $d_\mathcal S([u],[v])=0$ when the singularities of $u$ and $v$ are essentially indistinguishable (the Lelong numbers, multiplier ideal sheaves,  mixed masses of $[u]$ and $[v]$ are the same). More precisely, $d_\mathcal S([u],[v])=0$ if and only if $u$ and $v$ belong to the same relative full mass class, as introduced in \cite[Section 3]{DDL2}. In particular, $u \in \mathcal E(X,\theta)$ if and only if $d_\mathcal S([u],[V_\theta])=0$.
Consequently, the degeneracy of $d_\mathcal S$ is quite natural! 

Given the $d_\mathcal S$-continuity of $[u] \to \int_X \theta_u^n$ (Lemma \ref{lem: mixed_MA_dC_conv}) it is quite natural to introduce the following subspaces for any $\delta \geq 0$:
$$\mathcal S_\delta(X,\theta) : = \{[u] \in \mathcal S(X,\theta):  \ \int_X \theta_u^n \geq \delta \}.$$
These spaces are $d_\mathcal S$-closed, and according to our first main result they are also complete:
 
\begin{theorem}\label{thm: complete intro} For any $\delta > 0$ the space $(\mathcal S_\delta(X,\theta),d_\mathcal S)$ is complete.
\end{theorem}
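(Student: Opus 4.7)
The plan is to adapt the extraction-plus-monotone-limit argument familiar from completeness proofs for the Mabuchi $L^1$-metric, recast in the language of singularity types. Given a $d_\mathcal{S}$-Cauchy sequence $\{[u_k]\}\subset\mathcal{S}_\delta(X,\theta)$, I pass to a subsequence with $d_\mathcal{S}([u_k],[u_{k+1}])\leq 2^{-k}$ and choose representatives $u_k\leq V_\theta$. The sandwich estimate from Proposition~\ref{prop: poor_Pythagorean_S} then implies that for each $j\in\{0,\dots,n\}$ the mixed-mass sequence $M_j(u_k):=\int_X\theta_{V_\theta}^j\wedge\theta_{u_k}^{n-j}$ is Cauchy in $\RR$, and in particular $M_0(u_k)\to m_0\geq\delta>0$. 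The strict positivity of $m_0$ is the whole reason the argument survives, matching the ``presence of positive mass'' hypothesis in the statement.

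Next I build the candidate limit through a rooftop-envelope construction. Set $v_{k,\ell}:=P(u_k,u_{k+1},\dots,u_\ell)$, the largest $\theta$-psh minorant of the pointwise minimum, which is decreasing in $\ell$ and bounded above by $V_\theta$; let $v_k:=(\lim_{\ell\to\infty}v_{k,\ell})^*$. The sequence $(v_k)_k$ is increasing in $k$, so $u:=(\sup_k v_k)^*\in\textup{PSH}(X,\theta)$ is a well-defined candidate limit, provided the $v_k$ are not identically $-\infty$. The essential technical step is to propagate the mass lower bound from the $u_k$'s to the iterated rooftops: leveraging a pairwise Pythagorean-type inequality of the form $M_0(P(u,v))+M_0(\max(u,v))\geq M_0(u)+M_0(v)$ for potentials in the same relative full mass class, iterated over $\ell$, one should obtain $M_0(v_{k,\ell})\geq m_0-C\sum_{j\geq k}2^{-j}$ uniformly in $\ell$. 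This forces $\int_X\theta_u^n\geq\delta$, placing $[u]$ in $\mathcal{S}_\delta(X,\theta)$.

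The final step is to verify $[u_k]\to[u]$ in the $d_\mathcal{S}$-topology. Via the sandwich estimate this reduces to proving $M_j(u_k)\to M_j(u)$ and $M_j(\max(u_k,u))\to M_j(u)$ for every $j$, which follows from monotone-convergence behaviour of mixed Monge--Amp\`ere masses along the sandwich $v_k\leq u_k,u\leq V_\theta$, together with Lemma~\ref{lem: mixed_MA_dC_conv} and the monotonicity results of \cite{WN19}. The main obstacle is precisely the mass propagation in the iterated rooftop step: the sandwich estimate is fundamentally pairwise, whereas $v_{k,\ell}$ combines arbitrarily many potentials, so one must carefully control the cumulative mass loss in terms of the telescoping sum $\sum_j d_\mathcal{S}([u_j],[u_{j+1}])$. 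I expect this is where the model envelopes $P[u_k]$ and the structural results of \cite{DDL3,DL18} enter the argument, providing the rigidity needed to make the $\ell$-fold rooftop well-behaved without destroying positive mass.
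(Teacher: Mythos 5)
Your proposal takes a genuinely different route from the paper's proof, and while the ideas are sound, there are two gaps.

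The paper's actual completeness argument never touches iterated rooftop envelopes or the diamond inequality. It is shorter and runs as follows: given the Cauchy sequence $[u_j]$, one forms the \emph{decreasing} sequence $v_j := \textup{usc}(\sup_{k\geq j} u_k)$; Proposition~\ref{prop: Cauchy_monotone} shows $d_\mathcal S([u_j],[v_j])\to 0$, and since $v_j\geq u_j$, monotonicity gives $\int_X\theta_{v_j}^n\geq\delta$. Replacing $v_j$ by $\mathcal C(v_j)$ (zero $d_\mathcal S$-distance by Theorem~\ref{criteria d_1 vanishing}), the decreasing limit $v$ exists, and Proposition~\ref{prop: mixed MA dec maxi} --- whose proof rests on the subextension Lemma~\ref{lem: subextension}, itself the crucial place where $\delta>0$ enters --- delivers the convergence of \emph{all} mixed masses $\int_X\theta_{V_\theta}^k\wedge\theta_{v_j}^{n-k}\to\int_X\theta_{V_\theta}^k\wedge\theta_v^{n-k}$, whence $d_\mathcal S([v_j],[v])\to 0$ by Lemma~\ref{lem: monotone_lim_S}, and $[v]\in\mathcal S_\delta$ by Proposition~\ref{prop: MA dec maxi}. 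The candidate limit is approached from \emph{above}, not from below.

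Your construction via the iterated rooftop $v_{k,\ell}=P(u_k,\dots,u_\ell)$ and the diamond inequality of Theorem~\ref{thm: volume_diamond_ineq_intr} is exactly the mechanism the paper uses to prove the sandwich Theorem~\ref{thm: conv_subs_monotone}, not the completeness theorem, and two things are missing in your sketch. First, to close the telescoping in $M_0(v_{k,\ell})\geq m_0 - C\sum_{j\geq k}2^{-j}$ via the diamond inequality you need an \emph{upper} bound on $\int_X\theta_{\max(P(u_k,\dots,u_{k+\ell-1}),\,u_{k+\ell})}^n$; the only workable bound is $\leq\int_X\theta_{\tilde v_{k+\ell-1}}^n$ with $\tilde v_j$ the decreasing sup sequence, and controlling \emph{that} quantity close to $m_0$ already requires the whole Proposition~\ref{prop: Cauchy_monotone}/Lemma~\ref{lem: mixed_MA_dC_conv} machinery. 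So your route does not bypass the decreasing sequence; it needs it as a scaffold. Second, and more seriously, the diamond inequality controls only the top intersection number $M_0$. To conclude $d_\mathcal S([u_k],[u])\to 0$ you must show that \emph{every} mixed mass $M_j(u_k)-M_j(v_k)$ tends to $0$ (cf.\ Lemma~\ref{lem: d_s_monotone}), and the iterated diamond estimate says nothing about $M_j$ for $j\geq 1$. In the paper this is resolved by the observation that, once the top masses agree, the fact that the rooftop limit $w$ and the decreasing limit $v=u$ are both model potentials with $w\leq u$ forces $w=u$ by Theorem~\ref{thm: DDL2_E_char}, and only then does Lemma~\ref{lem: mon_limit_complete} upgrade this to full mixed-mass convergence. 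That rigidity step --- matching the two monotone limits through the characterization of relative full mass classes --- is the link your sketch gestures at (``I expect this is where the model envelopes\,\ldots enter the argument'') but does not supply, and without it the argument does not close.

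In short: correct intuition about why positive mass is needed and correct identification of the diamond inequality as a useful tool, but you have reconstructed the harder Theorem~\ref{thm: conv_subs_monotone} rather than the leaner completeness proof, and the mass-propagation and mixed-mass steps remain open in your outline.
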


Unfortunately the space  $(\mathcal S(X,\theta),d_\mathcal S)$ is not complete. This is quite natural however, as issues may arise if the non-pluripolar mass vanishes in the $d_\mathcal S$-limit (see Section  \ref{sect: incompleteness}, where we adapt an example of Demailly--Peternell--Schneider \cite{DPS94} to our context). 

As alluded to above, in general $L^1$-convergence of potentials (or even convergence in capacity) does not imply $d_\mathcal S$--convergence of their singularity types. However we note in Lemma \ref{lem: mon_limit_complete} below that if $u_j \nearrow u$ pointwise a.e. then $d_\mathcal S([u_j],[u]) \to 0$. In fact, Theorem \ref{thm: conv_subs_monotone} below gives a good intuition in general about what $d_\mathcal S$-convergence really means.
Omitting technicalities and somewhat abusing precision, this result shows that $d_\mathcal S([u_j],[u]) \to 0$ if and only if $u_j$ can be (subsequentially) sandwiched between two sequences of potentials $\psi_j \leq u_j \leq \chi_j$ such that $\{\psi_j\}_j$ is increasing, $\{\chi_j\}_j$ is decreasing and $\int_X \theta_{V_{\theta}}^l \wedge \theta^{n-l}_{\psi_j} \nearrow \int_X \theta_{V_{\theta}}^l \wedge \theta^{n-l}_{u}$ along with $\int_X \theta_{V_{\theta}}^l \wedge \theta^{n-l}_{\chi_j} \searrow \int_X \theta_{V_{\theta}}^l \wedge \theta^{n-l}_{u}$ for any $l \in \{0,\ldots,n-1\}$.

Suppose that $u,v \in \textup{PSH}(X,\theta)$ is such that $P(u,v):= \sup\{h \in \textup{PSH}(X,\theta) \ : \ h \leq \min(u,v)\} \in \textup{PSH}(X,\theta)$. Then $[\max(u,v)]$ and $[P(u,v)]$ represent the maximum and the minimum of the singularity types $[u],[v]$ respectively, and these four singularity types form a ``diamond" in the semi-lattice $\mathcal S(X,\theta)$. The following inequality between the masses of these potentials is of independent interest, and will be of great use in the proof of Theorem \ref{thm: conv_subs_monotone}  mentioned above.
\begin{theorem}\label{thm: volume_diamond_ineq_intr} Suppose that $u,v,P(u,v) \in \textup{PSH}(X,\theta)$. Then
$$\int_X \theta_u^n + \int_X \theta_v^n \leq \int_X \theta_{\max(u,v)}^n +\int_X \theta^n_{P(u,v)}.
$$
\end{theorem}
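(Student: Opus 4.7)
The plan is to reduce the inequality to two vanishing statements via the plurifine locality of the non-pluripolar Monge-Amp\`ere and the concentration of $\theta_{P(u,v)}^n$ on the contact set $\{P(u,v)=u\}\cup\{P(u,v)=v\}$, and then to prove those vanishings via the comparison principle together with Witt Nystr\"om's monotonicity of non-pluripolar mass. Since $\{u>v\}$ and $\{u<v\}$ are plurifine open (with $\max(u,v)$ agreeing there with $u$ and $v$ respectively), plurifine locality yields
\[
\int_X \theta_u^n+\int_X \theta_v^n-\int_X \theta_{\max(u,v)}^n \;=\; \int_{\{u\leq v\}}\theta_u^n+\int_{\{u>v\}}\theta_v^n,
\]
while concentration and locality give $\int_X \theta_{P(u,v)}^n=\int_{\{P=u\}}\theta_u^n+\int_{\{P=v,\,v<u\}}\theta_v^n$. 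Since $\{P=u\}\subset\{u\leq v\}$ and $\{P=v,\,v<u\}\subset\{u>v\}$, the theorem is equivalent to the two vanishings
\[
\theta_u^n\bigl(\{u\leq v\}\cap\{P(u,v)<u\}\bigr)=0,\qquad \theta_v^n\bigl(\{v<u\}\cap\{P(u,v)<v\}\bigr)=0.
\]

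For the first of these (the second follows by swapping the roles of $u$ and $v$), set $A=\{u\leq v,\,P(u,v)<u\}$. The comparison principle applied to $P(u,v)\leq u$ together with the concentration property gives
\[
\int_{\{P<u\}}\theta_u^n \;\leq\; \int_{\{P<u\}}\theta_{P(u,v)}^n \;=\;\int_{\{P=v,\,v<u\}}\theta_v^n\;\leq\;\int_{\{v<u\}}\theta_v^n.
\]
Noting that $P(u,v)\leq v<u$ on $\{u>v\}$ automatically, so $\{P<u\}=A\sqcup\{u>v\}$, and using plurifine locality $\int_{\{u>v\}}\theta_u^n=\int_{\{u>v\}}\theta_{\max(u,v)}^n$ together with the earlier decomposition of $\int_X\theta_{\max(u,v)}^n$, a short rearrangement gives
\[
\int_A \theta_u^n \;\leq\; \int_X\theta_v^n-\int_X\theta_{\max(u,v)}^n.
\]
The right-hand side is non-positive by Witt Nystr\"om's monotonicity $\int_X\theta_v^n\leq\int_X\theta_{\max(u,v)}^n$ (as $v\leq\max(u,v)$), so $\int_A\theta_u^n=0$.

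The main obstacle is not the combinatorics but the standing hypotheses of the two main tools: both the comparison principle and the concentration of $\theta_{P(u,v)}^n$ on the contact set are typically available in the literature only under relative full-mass hypotheses, whereas here one only assumes $P(u,v)\in\textup{PSH}(X,\theta)$. I would treat the case $u,v\in\mathcal E(X,\theta)$ first and then reduce the general case by truncation, setting $u_t:=\max(u,V_\theta-t)$, $v_t:=\max(v,V_\theta-t)$, which lie in $\mathcal E(X,\theta)$ and satisfy $P(u_t,v_t)\searrow P(u,v)$; applying the inequality to $(u_t,v_t)$ and passing to the limit should suffice. The delicate technical point is controlling $\int_X\theta_{P(u_t,v_t)}^n\to \int_X\theta_{P(u,v)}^n$ under this decreasing family, where mass can genuinely drop, but this should follow from the monotonicity results of \cite{DDL2}.
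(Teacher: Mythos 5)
Your reduction of the theorem to the two vanishings is not valid, and the chain of inequalities you use to prove the first vanishing has a sign error on the contact set $\{u=v\}$. Concretely: your opening ``decomposition''
\[
\int_X \theta_u^n+\int_X \theta_v^n-\int_X \theta_{\max(u,v)}^n = \int_{\{u\leq v\}}\theta_u^n+\int_{\{u>v\}}\theta_v^n
\]
is in fact only a $\leq$, because the plurifine split of $\int_X\theta_{\max(u,v)}^n$ leaves an extra term $\int_{\{u=v\}}\theta_{\max(u,v)}^n$, and Lemma~\ref{lem: concentration max} gives $\mathbbm{1}_{\{u=v\}}\theta_v^n\leq\mathbbm{1}_{\{u=v\}}\theta_{\max(u,v)}^n$ (with strict inequality possible). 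This helps when bounding the left side of the theorem, but it wrecks the ``short rearrangement'': carrying that term through carefully, your chain only yields
\[
\int_A\theta_u^n \;\leq\; \int_X\theta_v^n-\int_X\theta_{\max(u,v)}^n + \Big(\int_{\{u=v\}}\theta_{\max(u,v)}^n-\int_{\{u=v\}}\theta_v^n\Big),
\]
where the extra bracket is nonnegative, so you cannot conclude $\int_A\theta_u^n=0$. Similarly, the claimed equality $\int_X\theta_{P(u,v)}^n=\int_{\{P=u\}}\theta_u^n+\int_{\{P=v,\,v<u\}}\theta_v^n$ is not justified: $\{P=u\}$ is plurifine \emph{closed}, not open, so plurifine locality does not apply, and \cite[Lemma~3.7]{DDL2} only gives the one-sided bound $\theta_{P(u,v)}^n\leq\mathbbm{1}_{\{P=u\}}\theta_u^n+\mathbbm{1}_{\{P=v\}}\theta_v^n$, which is the wrong direction for the lower bound on $\int_X\theta_{P(u,v)}^n$ that the theorem requires. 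Finally, the comparison principle $\int_{\{P<u\}}\theta_u^n\leq\int_{\{P<u\}}\theta_{P(u,v)}^n$, applied to two potentials of genuinely different singularity type (and unequal total mass), does not hold in the clean form you invoke; the available versions carry a mass-defect correction.

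The paper's proof takes a related but structurally different route that avoids all three difficulties. It first reduces to $u=P[\phi]$, $v=P[\psi]$ model potentials (so that $\theta_u^n$, $\theta_v^n$ concentrate on $\{u=0\}$, $\{v=0\}$ by \cite[Theorem~3.8]{DDL2}), truncates relative to $w:=\max(u,v)$ via $u_t:=\max(u,w-t)$, $v_t:=\max(v,w-t)$ rather than relative to $V_\theta$, and then replaces the comparison principle by a pure \emph{mass-accounting} argument: since $P(u_t,v_t)$ has the same singularity type as $w$, $\int_X\theta_{P(u_t,v_t)}^n=\int_X\theta_w^n$, and $\theta_{P(u_t,v_t)}^n$ is supported on the disjoint union of $\{P(u_t,v_t)=0\}$ and $A_t:=\{u\leq w-t\}\cup\{v\leq w-t\}$, so one can solve for the mass on $\{P(u_t,v_t)=0\}$ and let $t\to\infty$ using Proposition~\ref{prop: MA dec maxi}. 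Your truncation $u_t=\max(u,V_\theta-t)$ would put you back into $\mathcal E(X,\theta)$ territory, but then $P(u_t,v_t)$ has the \emph{full} mass $\int_X\theta_{V_\theta}^n$ for every $t$, and the subtle part of controlling $\int_X\theta_{P(u_t,v_t)}^n\to\int_X\theta_{P(u,v)}^n$ (where mass can genuinely drop, as you note) is exactly where the argument would have to supply new input; the paper's choice of truncation relative to $w$ is what makes the limit tractable.
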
 
As we will see, in case $\dim X = 1$, the above inequality is actually an identity, however strict inequality may occur if $\dim X \geq 2$ (see Remark \ref{rem: no equality in diamond}). 

\paragraph{Applications to multiplier ideal sheaves.}

For $[v] \in \mathcal S(X,\theta)$ we denote by $\mathcal J[v]$ the multiplier ideal sheaf associated to the singularity type $[v]$. Recall that $\mathcal J[v]$ is the sheaf of germs of holomorphic
functions $f$ such that $|f|^2 e^{-v}$ is locally integrable on $X$. Providing a positive answer to the Demailly strong openness conjecture \cite{DK01}, Guan--Zhou have shown that for any $u_j,u$ psh such that $u_j \nearrow u$ a.e.  we have that $\mathcal J[u_j]=\mathcal J[u]$ for $j \geq j_0$ \cite{GZh15,GZh16}, with a partial result obtained earlier by Berndtsson \cite{Bern15} (see also \cite{Dem15,Hiep14,Le17} for related results). Below we extend the scope of this theorem to the global context, providing a result that uses $d_\mathcal S$-convergence and avoids the condition $u_j \leq u$:

\begin{theorem}\label{thm: mult_ideal_semicont_intr} Let $[u],[u_j] \in \mathcal S(X,\theta), \ j \geq 0$, such that $d_\mathcal S([u_j],[u]) \to 0$.  Then there exists $j_0 \geq 0$ such that $\mathcal J[u] \subseteq \mathcal J[u_j]$ for all $j \geq j_0$.
\end{theorem}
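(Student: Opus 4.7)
The plan is to reduce the statement to the Guan--Zhou strong openness theorem \cite{GZh15} by way of the sandwich description of $d_\mathcal S$-convergence supplied by Theorem~\ref{thm: conv_subs_monotone}. A routine subsubsequence argument shows that it suffices to prove: every subsequence of $(u_j)$ admits a further subsequence $(u_{j_k})$ with $\mathcal J[u]\subseteq \mathcal J[u_{j_k}]$ for all $k$ large; otherwise, some subsequence would permanently violate the inclusion, and no subsubsequence could recover it.

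Given such a subsequence, invoke Theorem~\ref{thm: conv_subs_monotone} to produce, after further extraction, a sandwich $\psi_k\le u_{j_k}\le \chi_k$ with $\psi_k\nearrow$, $\chi_k\searrow$, and $\int_X\theta_{V_\theta}^\ell\wedge\theta_{\psi_k}^{n-\ell}\nearrow \int_X\theta_{V_\theta}^\ell\wedge\theta_u^{n-\ell}$ for every $\ell$. Let $\psi^\infty:=(\sup_k\psi_k)^*$; this is $\theta$-psh (being bounded above by $\chi_0$) and by monotone continuity of non-pluripolar products shares with $u$ all the mixed masses against powers of $\theta_{V_\theta}$. Theorem~\ref{criteria d_1 vanishing} then places $u$ and $\psi^\infty$ in the common relative full mass class $\mathcal{E}(X,\theta,\phi)$ with $\phi := P[u]=P[\psi^\infty]$. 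Guan--Zhou applied to the a.e.\ increasing convergence $\psi_k\nearrow \psi^\infty$ yields $\mathcal J[\psi_k]=\mathcal J[\psi^\infty]$ for $k$ large; combined with $\mathcal J[\psi_k]\subseteq \mathcal J[u_{j_k}]$ (from $\psi_k\le u_{j_k}$), the whole proof reduces to establishing
\[
\mathcal J[u]\subseteq \mathcal J[\psi^\infty].
\]

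This residual inclusion is the main obstacle. Morally, it is a rigidity statement: multiplier ideals should be constant on relative full mass classes. My plan is to pass through the rooftop $R:=P(u,\psi^\infty)$. By Theorem~\ref{thm: volume_diamond_ineq_intr} combined with the equalities $\int_X\theta_u^n=\int_X\theta_{\psi^\infty}^n=\int_X\theta_\phi^n$ and the bound $\int_X\theta_{\max(u,\psi^\infty)}^n\le \int_X\theta_\phi^n$ (since $\max(u,\psi^\infty)\le \phi+O(1)$), one forces $\int_X\theta_R^n= \int_X\theta_\phi^n$; parallel bookkeeping at each mixed level places $R$ itself in $\mathcal{E}(X,\theta,\phi)$. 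The rooftop then acts as a common minorant of $u$ and $\psi^\infty$ inside the class, and the a.e.\ increasing approximants $\max(R,u-A_k)\nearrow u$ and $\max(R,\psi^\infty-A_k)\nearrow \psi^\infty$ (as $A_k\searrow 0$) furnish, via Guan--Zhou, identities $\mathcal J[\max(R,u-A_k)]=\mathcal J[u]$ and likewise for $\psi^\infty$ once $A_k$ is small enough. The delicate point---and the one I expect to require genuinely new input, beyond the sandwich machinery of Theorem~\ref{thm: conv_subs_monotone} and the classical Guan--Zhou theorem---is propagating the multiplier ideal information through the rooftop, exploiting the non-pluripolar mass identities to control how the singularities of elements of $\mathcal{E}(X,\theta,\phi)$ can deviate from those of the model $\phi$ and, consequently, from each other. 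Once this rigidity is secured, assembling the sandwich bounds with Guan--Zhou for $\psi_k$ yields $\mathcal J[u]\subseteq \mathcal J[u_{j_k}]$ along the chosen subsequence, and the subsubsequence reduction then delivers the full statement.
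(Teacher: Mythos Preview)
Your strategy is on the right track but contains two real gaps, both of which the paper's proof resolves with a simple reduction you overlooked.

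First, Theorem~\ref{thm: conv_subs_monotone} has hypotheses you do not verify: it requires $[u_j],[u]\in\mathcal S_\delta(X,\theta)$ for some $\delta>0$ \emph{and} that $u_j=P[u_j]$, $u=P[u]$. You invoke it without either. The positive-mass hypothesis is genuinely necessary for the sandwich construction (the $w_j$ are built as iterated rooftop envelopes, and Lemma~\ref{lem: P(u,v)_mass_exist} needs mass to guarantee these exist). The paper handles the zero-mass case by an $\varepsilon$-perturbation: Guan--Zhou gives $\mathcal J[u]=\mathcal J[u+\varepsilon V_\theta]$ for small $\varepsilon>0$, and then one works in the class $(1+\varepsilon)\theta$ where the mass is automatically $\geq \varepsilon^n\int_X\theta_{V_\theta}^n>0$, using Lemma~\ref{lem: from omega to two omega} to transfer $d_\mathcal S$-convergence.

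Second, your ``residual inclusion'' $\mathcal J[u]\subseteq\mathcal J[\psi^\infty]$ is not a delicate point at all once you reduce to model potentials. The paper first replaces $u$ and $u_j$ by $P[u]$ and $P[u_j]$, justifying this via Guan--Zhou applied to $P(u+c,0)\nearrow P[u]$ (which gives $\mathcal J[u]=\mathcal J[P[u]]$). With this reduction in hand, Theorem~\ref{thm: conv_subs_monotone} gives an increasing sequence $w_{j_k}$ converging \emph{to $u$ itself}, not to some auxiliary $\psi^\infty$---read the statement: it says $w_{j_k}\nearrow u$. Then Guan--Zhou immediately yields $\mathcal J[u]=\mathcal J[w_{j_k}]\subseteq\mathcal J[u_{j_k}]$ for large $k$, and there is nothing left to do. Your rooftop plan to propagate multiplier ideals through $\mathcal E(X,\theta,\phi)$ is unnecessary; indeed, the constancy of multiplier ideals on relative full mass classes you are groping toward follows from exactly the same Guan--Zhou argument ($u\in\mathcal E(X,\theta,\phi)$ with $\int_X\theta_\phi^n>0$ forces $P[u]=\phi$, hence $\mathcal J[u]=\mathcal J[\phi]$), so even your roundabout route collapses to the paper's direct one.
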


The proof of this theorem involves an application of Theorem \ref{thm: volume_diamond_ineq_intr} and the local Guan--Zhou result for increasing sequences \cite{GZh15,GZh16}. Lastly, since $u_j \leq u$ trivially gives $\mathcal J[u_j] \subseteq \mathcal J[u]$, together with the $d_\mathcal S$-convergence criteria of Lemma \ref{lem: mon_limit_complete}, our theorem contains the global version of the Guan--Zhou result for increasing sequences of $\theta$-psh potentials. 

Motivated by a possible local analog of Theorem \ref{thm: mult_ideal_semicont_intr}  it would be interesting to see if a local version of the $d_\mathcal S$ metric exists on the space of singularity types of local psh potentials.

Note that equality in the inclusion $\mathcal J[u] \subseteq \mathcal J[u_j]$ of Theorem \ref{thm: mult_ideal_semicont_intr}  can not be expected in general. Indeed,  $d_\mathcal S([\lambda u],[u]) \to 0 $ as $\lambda \nearrow 1$ for any $u \leq 0$, however if $u$ has  log type singularity at some $x \in X$, but is locally bounded on $X \setminus \{x\}$, then $\mathcal J[u] \subsetneq\mathcal J[\lambda u] = \mathcal O_X, \lambda \in (0,1)$. 

\paragraph{Applications to variation of complex Monge--Amp\`ere equations.} Finally,  we turn to the application that motivated our introduction of the $d_\mathcal S$-topology. 

In a series of works \cite{DDL2,DDL3,DDL4} the authors studied solutions to equations of complex Monge--Amp\`ere type with prescribed singularity.
In a nutshell, one starts with a potential $\phi \in \textup{PSH}(X,\theta)$ and a density $0\leq f \in L^p(X)$, $p>1$, and is looking for a solution $\psi \in \textup{PSH}(X,\theta)$ such that $\theta_\psi^n = f \omega^n$ and $[\psi] = [\phi]$. By 
\cite{WN19} the condition $\int_X \theta_\phi^n = \int_X f \omega^n > 0$ is necessary for the solvability of this equation. Beyond this normalization condition, as it turns out, the necessary and sufficient condition for the well posedness is that $[\phi]$ satisfies $[\phi] = [P[\phi]]$, where
$$P[\phi] : = \sup\{v \in \textup{PSH}(X,\theta) \setdef [v] \leq [u], v \leq 0\}.$$
Singularity types $[\phi]$ satisfying the above condition are of \emph{model type}, and they appear in many natural contexts, as described in \cite{DDL2}.

One might ask the question, what happens if one considers a family of such equations, where the prescribed singularity type $[\phi_j]$ converges to some fixed singularity type $[\phi]$. In our next result we obtain that in such a case, the solutions $\psi_j$ converge to $\psi$ in capacity as expected, further evidencing the practicality of the $d_\mathcal S$-topology:
\begin{theorem} \label{thm: stability intro} Given $\delta >0$ and $p >1$ suppose that:\\
$\circ$ $[\phi_j],[\phi] \in \mathcal S_\delta(X,\omega), \ j \geq 0$ satisfy $[\phi_j] = [P[\phi_j]]$,  $[\phi] = [P[\phi]]$ and $d_\mathcal S([\phi_j],[\phi]) \to 0$. \\
$\circ$ $f_j,f \geq 0$ are such that $\| f\|_{L^p},\| f_j\|_{L^p}$, $p>1$, are uniformly bounded  and $f_j \to_{L^1}f$.\\
$\circ$ $\psi_j,\psi \in \textup{PSH}(X,\theta), \ j \geq 0$ satisfy $\sup_X \psi_j=0$, $\sup_X \psi=0$ and 
$$
\begin{cases}
\theta_{\psi_j}^n = f_j \omega^n\\
[\psi_j]=[\phi_j] \  
\end{cases},
\ \ \ 
\begin{cases}
\theta_{\psi}^n = f \omega^n\\
[\psi]=[\phi].
\end{cases}
$$
Then $\psi_j$ converges to $\psi$ in capacity, in particular $\|\psi_j - \psi \|_{L^1} \to 0$.
\end{theorem}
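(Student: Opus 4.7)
The strategy is a compactness-plus-uniqueness argument: extract a subsequential $L^1$-limit of $\{\psi_j\}$, identify it as $\psi$ using Monge--Amp\`ere uniqueness together with singularity-type control, and then upgrade to convergence in capacity.

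The first step is a uniform $L^\infty$-comparison $\|\psi_j - \phi_j\|_{L^\infty(X)} \leq M$, with $M$ depending only on $\delta$, the uniform $L^p$-bound on $f_j$, and the geometric data $(X,\omega,\theta)$. Such a Kolodziej-type estimate is available in the prescribed model singularity setting from the authors' earlier work \cite{DDL2,DDL3}. Combined with $\sup_X \psi_j = 0$, it yields $L^1$-precompactness of $\{\psi_j\}$, so any subsequence admits a further subsequence converging in $L^1$ to some $\psi_\infty \in \textup{PSH}(X,\theta)$ with $\sup_X \psi_\infty = 0$.

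To identify $\psi_\infty$, I would use the sandwich characterization of $d_{\mathcal S}$-convergence (Theorem \ref{thm: conv_subs_monotone}): after a further subsequence, there exist $\alpha_j \leq \phi_j \leq \beta_j$ with $\alpha_j \nearrow \alpha_\infty$, $\beta_j \searrow \beta_\infty$, and all mixed masses $\int_X \theta_{V_\theta}^l \wedge \theta_{\alpha_j}^{n-l}$ and $\int_X \theta_{V_\theta}^l \wedge \theta_{\beta_j}^{n-l}$ converging monotonically to $\int_X \theta_{V_\theta}^l \wedge \theta_\phi^{n-l}$ for each $l$. The $L^\infty$-bound from the previous step gives $\alpha_j - M \leq \psi_j \leq \beta_j + M$, hence $\alpha_\infty - M \leq \psi_\infty \leq \beta_\infty + M$ in the limit. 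Together with $d_{\mathcal S}$-continuity of mixed masses (Lemma \ref{lem: mixed_MA_dC_conv}) and the characterization of relative full-mass classes in \cite{DDL2}, this pins down $\psi_\infty \in \mathcal E(X,\theta,\phi)$. Meanwhile, $\theta_{\psi_j}^n = f_j \omega^n$ with a uniform $L^p$-bound and $f_j \to f$ in $L^1$ yields $\theta_{\psi_\infty}^n = f\omega^n$ by standard Monge--Amp\`ere stability. Invoking uniqueness of the normalized solution in $\mathcal E(X,\theta,\phi)$ from \cite{DDL2,DDL3}, we conclude $\psi_\infty = \psi$; since every subsequence admits a further subsequence with the same limit, the full sequence $\psi_j \to \psi$ in $L^1$.

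To upgrade $L^1$-convergence to capacity convergence, I would combine the uniform $L^\infty$-comparison to $\phi_j$ with the weak convergence of the Monge--Amp\`ere masses $\theta_{\psi_j}^n \to \theta_\psi^n$: a comparison-principle estimate bounds $\capa(\{|\psi_j - \psi| > \vep\})$ by mixed masses of suitably chosen envelopes of $\psi_j$ and $\psi$, which vanish in the limit thanks to the $d_{\mathcal S}$-continuity of these masses. The main obstacle is the identification step: a priori, the $L^1$-limit $\psi_\infty$ could drop to a strictly smaller (more singular) full-mass class, and ruling this out requires the full strength of the sandwich characterization of $d_{\mathcal S}$-convergence together with the model property of the $\phi_j$, which prevents the kind of mass loss seen in the incompleteness example of Section \ref{sect: incompleteness}.
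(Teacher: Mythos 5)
Your setup is sound, but the key analytic step is a genuine gap. You reduce to showing that a subsequence of $\psi_j$ converges in capacity to $\psi$, use the uniform estimate $\phi_j - M \leq \psi_j \leq \phi_j$ (this is \cite[Theorem~4.7]{DDL4}) to extract an $L^1$-subsequential limit $\psi_\infty$, and identify $[\psi_\infty]=[\phi]$ via the sandwich of Theorem~\ref{thm: conv_subs_monotone}. All of this is correct and mirrors the paper's preliminary reductions. The problem is the claim that $\theta_{\psi_j}^n = f_j\omega^n$ with a uniform $L^p$ bound, $f_j\to f$ in $L^1$, and $\psi_j\to\psi_\infty$ in $L^1$ ``yields $\theta_{\psi_\infty}^n = f\omega^n$ by standard Monge--Amp\`ere stability.'' In the big cohomology / prescribed singularity setting, $L^1$-convergence of $\theta$-psh potentials does \emph{not} imply weak convergence of their non-pluripolar Monge--Amp\`ere measures; the convergence machinery available here (Theorem~\ref{thm: lsc of non pluripolar product}, \cite[Theorem~2.3]{DDL2}) requires convergence in capacity, which is precisely what you are trying to prove. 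Without a separate argument, $\psi_\infty$ could a priori have extra singular Monge--Amp\`ere mass that is invisible to $L^1$-convergence, and the circularity cannot be dismissed as ``standard.'' Your closing sentence concedes this danger but does not resolve it: the sandwich characterization of $d_{\mathcal S}$-convergence controls the singularity type $[\psi_\infty]$, not directly the equality $\theta_{\psi_\infty}^n = f\omega^n$.

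The paper circumvents this by never passing through an abstract $L^1$-limit. It builds two \emph{monotone} sequences directly from the $\psi_j$ themselves: $\gamma_j := \textup{usc}(\sup_{k\geq j}\psi_k)$ decreasing, and $\chi_j := \lim_k P(\psi_j,\ldots,\psi_{j+k})$ increasing, both well defined because $\psi_j$ is sandwiched by the monotone envelopes of $\phi_j$. For $\gamma_j$, the lower bound $\theta_{\gamma_j}^n \geq (\inf_{k\geq j} f_k)\omega^n$ from \cite[Lemma~4.27]{DDL2} plus monotone mass convergence forces $\theta_\gamma^n = f\omega^n$, hence $\gamma=\psi$ by uniqueness. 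For $\chi_j$ the argument is considerably harder and is the technical heart of the proof: one rewrites the equation as $\theta_{\psi_j}^n = e^{\varepsilon\psi_j} h_j\omega^n$, uses the supersolution Lemma~\ref{lem: supersolution}, Skoda-type uniform integrability of $e^{-\varepsilon\psi_j}$ (\cite{Zer01}, \cite[Theorem~2.50]{GZ17}), H\"older estimates, and the comparison and domination principles to conclude $\chi = \psi$. Monotonicity of $\chi_j \nearrow \psi$ and $\gamma_j\searrow\psi$ then gives convergence in capacity immediately via \cite[Proposition~4.25]{GZ17}. Your capacity upgrade, by contrast, appeals to an unspecified comparison-principle estimate on $\capa(\{|\psi_j-\psi|>\varepsilon\})$; even granting the identification step, this would need to be made precise and is not obviously easier than the monotone-envelope route. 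In short: your outline captures the right starting estimates and the uniqueness endgame, but it replaces the genuinely difficult middle (producing monotone supersolutions/subsolutions and running the $e^{-\varepsilon\psi_j}$-weighted argument) with an appeal to a stability principle that is not available at the level of $L^1$-convergence.
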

\paragraph{Organization.} In Section \ref{sect: preliminaries} we recall  several results in relative pluripotential theory developed recently by the authors. The metric $\dS$ along with its basic properties are introduced in Section \ref{sect: metric geo}. Theorem \ref{thm: complete intro} is proved in  Section \ref{sect: completeness} where an example is also given showing that the positive mass condition is necessary. Theorem \ref{thm: volume_diamond_ineq_intr} is proved in Section 5, Theorem \ref{thm: mult_ideal_semicont_intr} is proved in Section \ref{sect: ideal sheaves},  and Theorem \ref{thm: stability intro} is proved in Section \ref{sect: stability}. 

\paragraph{Acknowledgments.} The first named author has been partially supported by NSF grants DMS-1610202 and DMS-1846942(CAREER). This work was finished while the authors participated in the ``Research in Paris" program of Institut Henri Poincar\'e, and we would like to thank the institute for the hospitality and support.

\section{Preliminaries}\label{sect: preliminaries}

In this section we recall terminology and relevant results from the literature with focus on the works \cite{DDL1,DDL2,DDL3,DDL4}, as well as \cite{DL18}. We also point out some differences and extend the scope of some results whenever necessary.

\subsection{Model potentials and relative full mass classes}

Let $(X,\omega)$ be a compact K\"ahler manifold of dimension $n$ and fix $\theta$ a smooth closed $(1,1)$-form whose cohomology class is big. Our notation is taken from \cite{DDL2,DDL3,DDL4} and we refer to these works for further details.

A function $u: X \rightarrow \mathbb{R}\cup \{-\infty\}$ is called quasi-plurisubharmonic (quasi-psh) if locally $u= \rho + \varphi$, where $\rho$ is smooth and $\varphi$ is a plurisubharmonic (psh) function. We say that $u$ is $\theta$-plurisubharmonic ($\theta$-psh) if it is quasi-psh and $\theta_u:=\theta+i\ddbar u \geq 0$ in the weak sense of currents on $X$. We let $\PSH(X,\theta)$ denote the space of all $\theta$-psh functions on $X$ which are not identically $-\infty$. The class $\{\theta\}$ is {\it big} if there exists $\psi\in \psh(X,\theta)$ satisfying $\theta +i\ddbar \psi\geq \vep \omega$ for some $\vep>0$.   By the fundamental approximation theorem of Demailly \cite{Dem92}, if $\{\theta\}$ is big  there are plenty of $\theta$-psh functions.

 Given $u,v \in \textup{PSH}(X,\theta)$,  we say that 
\begin{itemize}\vspace{-0.1cm}
	\item $u$ is more singular than $v$, i.e., $u \preceq v$, if there exists $C\in \mathbb{R}$  such that $u\leq v+C$;\vspace{-0.1cm}
	\item $u$ has the same singularity as $v$, i.e., $u \simeq v$, if $u\preceq v$ and $v\preceq u$. \vspace{-0.1cm}
\end{itemize}
The classes $[u] \in \mathcal S(X,\theta)$ of this latter equivalence relation are called \emph{singularity types}. When $\theta$ is non-K\"ahler, all elements of $\textup{PSH}(X,\theta)$ are quite singular, and we distinguish the potential with the smallest singularity type in the following manner:
$$V_\theta := \sup \{u \in \textup{PSH}(X,\theta) \textup{ such that } u \leq 0\}.$$
A function $u\in \PSH(X,\theta)$ is said to have minimal singularity if it has the same singularity type as $V_{\theta}$, i.e., $[u]=[V_\theta]$.

Given $\theta^1,...,\theta^n$ smooth closed $(1,1)$-forms  and $\varphi_j \in \textup{PSH}(X,\theta^j)$, $j=1,...n$, following Bedford-Taylor \cite{BT76,BT82} in the local setting, it has been shown in \cite{BEGZ10} that the sequence of positive measures
\begin{equation}\label{eq: k_approx_measure}
\mathbbm{1}_{\bigcap_j\{\varphi_j>V_{\theta^j}-k\}}\theta^{1}_{\max(\varphi_1, V_{\theta^1}-k)}\wedge \ldots\wedge \theta^n_{\max(\varphi_n, V_{\theta^n}-k)}
\end{equation}
has total mass (uniformly) bounded from above and is non-decreasing in $k \in \Bbb R$, hence converges weakly to the so called \emph{non-pluripolar product} 
\[
\theta^1_{\varphi_1 } \wedge\ldots\wedge\theta^n_{\varphi_n }.
\]
The resulting positive measure does not charge pluripolar sets. In the particular case when $\varphi_1=\varphi_2=\ldots=\varphi_n=\varphi$ and $\theta^1=...=\theta^n=\theta$ we will call $\theta_{\varphi}^n$ the non-pluripolar Monge-Amp\`ere measure of $\varphi$, which generalizes the usual notion of volume form in case $\theta_{\varphi}$ is a smooth K\"ahler form.

An important property of the non-pluripolar product is that it is local with respect to the plurifine topology (see  \cite[Corollary 4.3]{BT87},\cite[Section 1.2]{BEGZ10}).  For convenience we record the following version for later use. 
\begin{lemma} \label{lem: plurifine}
Fix closed smooth big $(1,1)$-forms $\theta^1,...,\theta^n$.  Assume that $\varphi_j,\psi_j,j=1,...,n$ are $\theta^j$-psh functions such that $\varphi_j =\psi_j$ on $U$ an open set in the plurifine topology. Then 
$$
\mathbbm{1}_{U} \theta^1_{\varphi_1} \wedge ... \wedge \theta^n_{\varphi_n} = \mathbbm{1}_{U} \theta^1_{\psi_1} \wedge ... \wedge \theta^n_{\psi_n}.
$$
\end{lemma}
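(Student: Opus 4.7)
The plan is to reduce the non-pluripolar setting to the classical bounded Bedford--Taylor plurifine locality of \cite{BT87}, via the truncation scheme that defines the non-pluripolar product in \cite{BEGZ10}, and then pass to a monotone limit.

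First I would introduce the truncations $\varphi_j^k := \max(\varphi_j, V_{\theta^j}-k)$ and $\psi_j^k := \max(\psi_j, V_{\theta^j}-k)$, together with the plurifine open sets $E_j^k := \{\varphi_j > V_{\theta^j}-k\}$. Since $\varphi_j = \psi_j$ on $U$, we have $U\cap E_j^k = U\cap \{\psi_j > V_{\theta^j}-k\}$, and on this common set $\varphi_j^k = \varphi_j = \psi_j = \psi_j^k$ for every $j$. The first key step is to prove, for each fixed $k$, the identity
$$\mathbbm{1}_{U \cap \bigcap_j E_j^k}\, \theta^1_{\varphi_1^k}\wedge\cdots\wedge\theta^n_{\varphi_n^k} = \mathbbm{1}_{U \cap \bigcap_j E_j^k}\, \theta^1_{\psi_1^k}\wedge\cdots\wedge\theta^n_{\psi_n^k}.$$
Working in a coordinate ball on which $\theta^j = i\ddbar \rho^j$ with $\rho^j$ smooth, the psh functions $\rho^j + \varphi_j^k$ agree with $\rho^j + \psi_j^k$ on the relevant plurifine open set but are in general only locally bounded off $\{V_{\theta^j} = -\infty\}$; I would therefore further truncate them at level $-N$, invoke \cite[Corollary 4.3]{BT87} in the locally bounded setting, and then let $N\to\infty$ using monotone continuity of the Bedford--Taylor operator.

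The second step is to multiply the identity by $\mathbbm{1}_U$ and let $k\to\infty$. By the approximation scheme recalled in the preliminaries, the left-hand side increases weakly to $\mathbbm{1}_U\, \theta^1_{\varphi_1}\wedge\cdots\wedge\theta^n_{\varphi_n}$ and the right-hand side to $\mathbbm{1}_U\, \theta^1_{\psi_1}\wedge\cdots\wedge\theta^n_{\psi_n}$, since the sets $\bigcap_j E_j^k$ exhaust $X$ up to a pluripolar set which is not charged by either non-pluripolar product. The equality persists in the limit, yielding the claim.

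The main obstacle is that in the general big setting $V_{\theta^j}$ need not be globally bounded, so the first-stage truncations $\varphi_j^k, \psi_j^k$ are not bounded either, and the bounded-potential plurifine locality of \cite{BT87} cannot be invoked directly. The additional local truncation at level $-N$, together with a careful check that monotone limits commute with the cutting by $\mathbbm{1}_{U \cap \bigcap_j E_j^k}$ at each stage, is where the real work lies; once that is in place the rest is routine bookkeeping with weak monotone convergence.
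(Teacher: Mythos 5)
The paper does not actually prove this lemma; it is recorded with citations to \cite[Corollary 4.3]{BT87} and \cite[Section 1.2]{BEGZ10}. Your proposal is a correct unwinding of those citations and would serve as a legitimate proof. Two small remarks. First, the extra local truncation at level $-N$ is not really needed: after the $V_{\theta^j}$-truncation, each $\varphi_j^k = \max(\varphi_j, V_{\theta^j}-k)$ has \emph{minimal singularities} and is therefore locally bounded on the ample locus $\Amp(\theta^j)$ (this is where $V_{\theta^j}$ is locally bounded, which is the precise set you want, rather than merely the complement of $\{V_{\theta^j}=-\infty\}$). The Bedford--Taylor products appearing in \eqref{eq: k_approx_measure} are, by definition, the locally bounded BT products on $\bigcap_j \Amp(\theta^j)$ extended by zero; so \cite[Corollary 4.3]{BT87} applies directly on that Zariski-open set without a secondary truncation, and neither side charges its complement. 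Second, for the limit $k\to\infty$ it is essential that the approximating measures in \eqref{eq: k_approx_measure} are increasing in $k$ (as recalled in the preliminaries), not merely weakly convergent, since one cuts by the indicator of a Borel set; your phrase ``increases weakly'' correctly invokes this monotonicity, and the identity of the two increasing limits of measures restricted to $U$ then follows on all Borel sets. There is no gap.
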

Lemma \ref{lem: plurifine} will be referred to as the plurifine locality. For practice we note that sets of the form $\{u<v\}$, where $u,v$ are quasi-psh functions, are open in the plurifine topology.

As a consequence of Bedford-Taylor theory, the measures in \eqref{eq: k_approx_measure} all have total mass less than $\int_X \theta_{V_\theta}^n$, in particular, after letting $k \to \infty$ we notice that $\int_X \theta_{\varphi}^n \leq \int_X \theta_{V_\theta}^n$. In fact it was proved in \cite[Theorem 1.2]{WN19} that for any $u,v \in \textup{PSH}(X,\theta)$  the following monotonicity result holds for the masses:
$$v \preceq u \Longrightarrow \int_X \theta_v^n \leq \int_X \theta_u^n.$$

This result was extended in \cite{DDL2} for non-pluripolar products building on the following fundamental convergence property. 

\begin{theorem}
	\label{thm: lsc of non pluripolar product}
	Let $\theta^j, j \in \{1,\ldots,n\}$ be smooth closed $(1,1)$-forms on $X$ whose cohomology classes are big. Suppose that for all $j \in \{1,\ldots,n\}$  we have $u_j,u_j^k\in \textup{PSH}(X,\theta^j)$ such that  $u^k_j \to u_j$ in capacity as $k \to \infty$. If $\chi_k\geq 0$ is a sequence of uniformly bounded  quasi-continuous functions which converges in capacity to a quasi-continuous function $\chi\geq 0$, then
	\begin{equation}\label{eq: key convergence}
	\liminf_{k\to +\infty} \int_X \chi_k \theta^1_{u^k_1} \wedge \ldots \wedge \theta^n_{u^k_n}  \geq  \int_X \chi  \theta^1_{u_1} \wedge \ldots \wedge \theta^n_{u_n}. 
	\end{equation}
If additionally,  
	\begin{equation}\label{eq: global_mass_semi_cont}
	\int_ X \theta^1_{u_1} \wedge \ldots \wedge \theta^n_{u_n} \geq \limsup_{k\rightarrow \infty} \int_X \theta^1_{u^k_1} \wedge \ldots \wedge \theta^n_{u^k_n},
	\end{equation}
then $\theta^1_{u^k_1} \wedge \ldots \wedge \theta^n_{u^k_n}$ weakly converges  to  $\theta^1_{u_1} \wedge \ldots \wedge \theta^n_{u_n}$.
\end{theorem}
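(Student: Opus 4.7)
The plan is to reduce the $\liminf$ inequality \eqref{eq: key convergence} to the Bedford--Taylor setting via canonical truncation, and then deduce the weak convergence under \eqref{eq: global_mass_semi_cont} from the $\liminf$ inequality alone. For the truncation step, set $u_j^{k,m} := \max(u_j^k, V_{\theta^j} - m)$ and $u_j^m := \max(u_j, V_{\theta^j} - m)$; these are locally bounded modulo the pluripolar set $\{V_{\theta^j} = -\infty\}$ and, since $\max(\cdot\,, V_{\theta^j} - m)$ is $1$-Lipschitz, $u_j^{k,m} \to u_j^m$ in capacity as $k \to \infty$ for each fixed $m$. The classical Bedford--Taylor continuity theorem then gives the weak convergence $\theta^1_{u_1^{k,m}} \wedge \cdots \wedge \theta^n_{u_n^{k,m}} \to \theta^1_{u_1^{m}} \wedge \cdots \wedge \theta^n_{u_n^{m}}$. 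Combined with the uniformly bounded, capacity-convergent sequence $\chi_k \to \chi$, a standard Egorov-style argument (split into a set where $\chi_k$ converges uniformly and a complement of arbitrarily small capacity, whose mass against the Bedford--Taylor measures is controlled uniformly in $k$ via Chern--Levine--Nirenberg inequalities) should give, for each fixed $m$,
\[
\lim_{k \to \infty} \int_X \chi_k \, \theta^1_{u_1^{k,m}} \wedge \cdots \wedge \theta^n_{u_n^{k,m}} = \int_X \chi \, \theta^1_{u_1^{m}} \wedge \cdots \wedge \theta^n_{u_n^{m}}.
\]

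Next I would localize via plurifine locality. Setting $O^m_k := \bigcap_j \{u_j^k > V_{\theta^j} - m\}$ and $O^m := \bigcap_j \{u_j > V_{\theta^j} - m\}$, Lemma~\ref{lem: plurifine} gives $\mathbf{1}_{O^m_k} \theta^1_{u_1^k} \wedge \cdots \wedge \theta^n_{u_n^k} = \mathbf{1}_{O^m_k} \theta^1_{u_1^{k,m}} \wedge \cdots \wedge \theta^n_{u_n^{k,m}}$, so the left-hand side of \eqref{eq: key convergence} is bounded below by $\liminf_k \int_X \mathbf{1}_{O^m_k} \chi_k \, \theta^1_{u_1^{k,m}} \wedge \cdots$. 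Since the indicator $\mathbf{1}_{O^m_k}$ is not capacity-continuous, I would replace it from below by the quasi-continuous cutoff $\zeta^{k,m,\varepsilon} := \max\bigl(0, \min\bigl(1, \varepsilon^{-1} \min_j(u_j^k - V_{\theta^j} + m - \varepsilon)\bigr)\bigr)$, supported in $O^m_k$, which converges in capacity to the analogous limit cutoff $\zeta^{m,\varepsilon}$ as $k \to \infty$. Applying the bounded-case conclusion to $\chi_k \zeta^{k,m,\varepsilon}$, then letting first $k \to \infty$, then $\varepsilon \to 0^+$, and finally $m \to \infty$ using the monotone convergence \eqref{eq: k_approx_measure} of the truncated products to the non-pluripolar product, produces the desired $\liminf$ inequality.

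The main obstacle I anticipate is disentangling this triple limit cleanly: ensuring that the cutoffs $\zeta^{k,m,\varepsilon}$ interact compatibly with capacity convergence of both $u_j^k$ and $\chi_k$ simultaneously, and that the residual contributions from the ``boundary'' of $O^m$ tend to zero as $\varepsilon \to 0$ uniformly enough in $k$. Careful bookkeeping via Chern--Levine--Nirenberg-type mass bounds (uniform in $k$) for the bounded truncations should keep things under control.

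Finally, the weak convergence under \eqref{eq: global_mass_semi_cont} follows purely formally from the $\liminf$ inequality just established. Applying \eqref{eq: key convergence} with $\chi_k \equiv \chi \equiv 1$ and combining with \eqref{eq: global_mass_semi_cont} yields $\lim_k \int_X \theta^1_{u_1^k} \wedge \cdots \wedge \theta^n_{u_n^k} = \int_X \theta^1_{u_1} \wedge \cdots \wedge \theta^n_{u_n}$. Then for any continuous $f : X \to \mathbb{R}$, applying the $\liminf$ inequality once to $\chi = f + \|f\|_\infty$ and once to $\chi = \|f\|_\infty - f$ produces matching $\liminf$ and $\limsup$ bounds for $\int_X f \, \theta^1_{u_1^k} \wedge \cdots \wedge \theta^n_{u_n^k}$, giving the claimed weak convergence.
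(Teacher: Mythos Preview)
Your proposal is correct and follows the standard truncation/plurifine-locality approach. The paper itself does not give a proof here: it simply states that the result is a slight generalization of \cite[Theorem~2.3]{DDL2} and that ``the proof is the same.'' Your sketch is essentially that argument---reduce to the Bedford--Taylor setting via the canonical cutoffs $\max(\cdot,V_{\theta^j}-m)$, replace indicators of plurifine-open sets by quasi-continuous bump functions, and pass to the limit in $k$, then $\varepsilon$, then $m$---so there is nothing to contrast. Your final paragraph deducing weak convergence from the $\liminf$ inequality plus \eqref{eq: global_mass_semi_cont} by testing against $\|f\|_\infty \pm f$ is exactly the standard way to close the argument.
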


Note that this result is slightly more general than \cite[Theorem 2.3]{DDL2} but the proof is the same.  
Shadowing Bedford--Taylor theory \cite{BT82,BT87}, the above convergence and monotonicity results opened the door to the development of relative finite energy pluripotential theory, whose terminology we now partially recall from \cite[Sections 2-3]{DDL2}. 

\paragraph*{The relative full mass classes $\Ec(X,\theta,\phi)$.}
Fixing $\phi \in \textup{PSH}(X,\theta)$ one can consider only $\theta$-psh functions that are more singular than $\phi$. Such potentials form the set $\textup{PSH}(X,\theta,\phi)$. Since the map $[u] \to \int_X \theta_u^n$ is monotone increasing, but not strictly increasing, it is natural to consider the set of $\phi$-relative \emph{full mass potentials}:
$$\mathcal E(X,\theta,\phi) := \left\{u \in \textup{PSH}(X,\theta,\phi) \ \textup{ such that } \int_X \theta_u^n = \int_X \theta_\phi^n\right \}.$$
Naturally, when $v \in \textup{PSH}(X,\theta,\phi)$ we only have $\int_X \theta^n_v \leq \int_X \theta^n_\phi$. As pointed out in \cite{DDL2,DDL4}, when studying the potential theory of the above space, the following well known envelope constructions are of great help:
$$ P_\theta(\psi,\chi), \ P_\theta[\psi](\chi),  \ P_\theta[\psi] \in \textup{PSH}(X,\theta).
$$

In the context of K\"ahler geometry these were introduced by Ross and Witt Nystr\"om \cite{RWN14}, using slightly different notation. Given any $f : X \to \Bbb [-\infty,+\infty]$ the starting point is the envelope $P_\theta(f):=\textup{usc}(\sup\{v \in \textup{PSH}(X,\theta), \ v \leq f \})$. Then, for $\psi,\chi \in \textup{PSH}(X,\theta)$ we can introduce the ``rooftop envelope'' $P_\theta(\psi,\chi):=P_\theta(\min(\psi,\chi))$. This allows us to further introduce
$$P_\theta[\psi](\chi) := \textup{usc}\Big(\lim_{C \to +\infty}P_\theta(\psi+C,\chi)\Big).$$

It is easy to see that $P_\theta[\psi](\chi)$ depends on the singularity type $[\psi]$. When $\chi = V_\theta$, we will simply write $P[\psi]:=P_\theta[\psi]:=P_\theta[\psi](V_\theta)$ and call this potential the \emph{envelope of the singularity type} $[\psi]$. It follows from \cite[Theorem 3.8]{DDL2}, \cite{Ber18}, \cite{GLZ17} that $\theta_{P[\psi]}^n \leq \mathbbm{1}_{\{P[\psi] =0\}} \theta^n$. 
Also, by \cite[Proposition 2.3 and Remark 2.5]{DDL3} we have that $\int_X \theta_{P[\psi]}^n = \int_X \theta_\psi^n$.

Using such envelopes, in \cite[Theorem 1.3]{DDL2} we characterized membership in $\mathcal E(X,\theta,\phi)$:

 \begin{theorem}\label{thm: DDL2_E_char} Suppose $\phi \in \textup{PSH}(X,\theta)$  and $\int_X \theta_\phi^n >0$. Then $u \in \mathcal E(X,\theta,\phi)$ if and only if $u \in \textup{PSH}(X,\theta,\phi)$ and $P[u]=P[\phi]$.
\end{theorem}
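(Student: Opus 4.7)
The backward direction is immediate from the monotonicity of non-pluripolar masses along the singularity partial order \cite{WN19} combined with the mass-preservation identity $\int_X \theta_{P[\psi]}^n = \int_X \theta_\psi^n$ recorded just above the statement: the condition $u\preceq \phi$ gives $\int_X\theta_u^n \leq \int_X\theta_\phi^n$, while the hypothesis $P[u] = P[\phi]$ yields
$$
\int_X\theta_u^n = \int_X \theta_{P[u]}^n = \int_X\theta_{P[\phi]}^n = \int_X\theta_\phi^n,
$$
so equality of masses holds, i.e. $u\in\mathcal E(X,\theta,\phi)$.

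For the forward direction, assume $u\in\mathcal E(X,\theta,\phi)$. My first step is to establish the pointwise bound $P[u]\leq P[\phi]$ directly from the envelope definition: any $C_0$ with $u\leq \phi+C_0$ gives $P_\theta(u+C,V_\theta)\leq P_\theta(\phi+C+C_0, V_\theta)$ for every $C>0$, and passing to the limit $C\to +\infty$ (and taking upper semi-continuous regularizations) yields the claim. Combined with the mass-preservation identity and the equality $\int_X\theta_u^n = \int_X\theta_\phi^n$, this leaves two pointwise-comparable model envelopes satisfying
$$
P[u]\leq P[\phi]\leq V_\theta, \qquad \int_X\theta_{P[u]}^n = \int_X\theta_{P[\phi]}^n >0,
$$
and the theorem reduces to the following \emph{domination principle for model envelopes}: two $\theta$-psh model envelopes that are pointwise comparable and share the same positive non-pluripolar mass must coincide.

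This reduction is where I expect the main obstacle to lie. The plan is a comparison argument on the plurifine open set $U:=\{P[u]<P[\phi]\}$: applying a Witt Nystr\"om type comparison principle in the form
$$
\int_{U}\theta_{P[\phi]}^n \leq \int_{U}\theta_{P[u]}^n,
$$
together with the structural fact that each $\theta_{P[\cdot]}^n$ is concentrated on the contact set $\{P[\cdot]=V_\theta\}$, the equal-mass hypothesis forces $\theta_{P[\phi]}^n(U) = 0$. The delicate step is to upgrade this vanishing to the pointwise identity $P[u]=P[\phi]$: here I would invoke the envelope characterization $P[\phi]=P[P[\phi]]$, the plurifine locality of Lemma \ref{lem: plurifine}, and an approximation of $P[\phi]$ by $\max(P[u], P[\phi]-\varepsilon)$ on $U$ to show that any competitor in the defining supremum of $P[P[\phi]]$ exceeding $P[u]$ on a non-pluripolar part of $U$ would violate the mass balance on the contact set. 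Making this last step rigorous without \emph{a priori} placing $P[u]$ and $P[\phi]$ in a common relative full-mass class — which would beg the question — is the principal technical difficulty, and it is exactly here that the model-envelope hypothesis $P[\tau_i]=\tau_i$ becomes indispensable.
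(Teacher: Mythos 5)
This theorem is quoted verbatim from \cite[Theorem 1.3]{DDL2}; the present paper offers no proof of its own, so there is nothing to compare your argument against within this source. Assessing your proposal on its own terms: the backward implication and the reduction in the forward implication are both correct. You rightly get $P[u]\le P[\phi]$ from monotonicity of the envelope construction, and then the mass-preservation identity $\int_X\theta_{P[\cdot]}^n=\int_X\theta_{\cdot}^n$ combined with $u\in\mathcal E(X,\theta,\phi)$ gives $\int_X\theta_{P[u]}^n=\int_X\theta_{P[\phi]}^n>0$.

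Where you go wrong is in the last paragraph, where you manufacture a difficulty that is not there. You worry that placing $P[u]$ and $P[\phi]$ in a common relative full-mass class ``would beg the question,'' but it does not: by \emph{definition} $\mathcal E(X,\theta,P[\phi])=\{w\in\mathrm{PSH}(X,\theta,P[\phi]):\int_X\theta_w^n=\int_X\theta_{P[\phi]}^n\}$, and you have already established both that $P[u]\le P[\phi]$ and that the masses agree. Hence $P[u]\in\mathcal E(X,\theta,P[\phi])$ is an immediate consequence of what you proved, not a restatement of the conclusion $P[u]=P[\phi]$. With that observation in hand, the rest is routine and needs no delicate approximation of envelopes: $\theta_{P[u]}^n$ is carried by $\{P[u]=0\}$ (using the fact recorded just before the statement, which applies because $\int_X\theta_{P[u]}^n>0$), and on that set $P[u]=0\ge P[\phi]$, so $P[u]\ge P[\phi]$ holds $\theta_{P[u]}^n$-a.e.; the domination principle for the class $\mathcal E(X,\theta,P[\phi])$ (\cite[Proposition 3.11]{DDL2}, invoked elsewhere in this paper, e.g.\ in the proof of Corollary \ref{coro: C stable increasing}) then upgrades this to $P[u]\ge P[\phi]$ everywhere, forcing equality. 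Your sketched detour through $P[P[\phi]]$, plurifine locality, and competitors exceeding $P[u]$ on $U$ is thus unnecessary and is precisely the kind of ad hoc argument the established comparison/domination machinery is designed to replace.
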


For further results about the connection of envelopes and relative full mass classes we refer to \cite[Section 3]{DDL2}.

\paragraph{The ceiling operator and model potentials.} We consider the \emph{ceiling} operator $\mathcal C:\textup{PSH}(X,\theta) \to \textup{PSH}(X,\theta)$ defined by
$$
\mathcal C(u) := \textup{usc} (\sup \mathcal F_u),
$$
where 
\begin{equation}
    \mathcal F_u:= \left \{v \in \textup{PSH}(X,\theta) \setdef \ [u] \leq [v], \ v \leq 0, \ \int_X \theta_v^k\wedge \theta_{V_\theta}^{n-k} = \int_X  \theta_u^k\wedge \theta_{V_\theta}^{n-k} , \  k\in \{0,...,n\} \right \}. \label{eq: Fu}
\end{equation}
As it turns out, there is no reason to take the upper semi-continuous regularization in the definition above, as $\mathcal C(u)$ is a candidate in its defining family $\mathcal F_u$. This is confirmed by the next lemma.

\begin{lemma}\label{lem: ceiling as limit}
Assume that $u\in \psh(X,\theta)$ and  $u\leq 0$. Then
\begin{equation}\label{eq: ceiling_envelope_id}
\mathcal{C}(u) = \lim_{\varepsilon\to 0^+} P{[(1-\varepsilon)u+\varepsilon V_\theta]} \in \mathcal{F}_u. 
\end{equation}
In particular, if $\phi,\psi \in \textup{PSH}(X,\theta)$ with $[\phi] \leq [\psi]$ then $\mathcal C(\phi) \leq \mathcal C(\psi)$, i.e., $\mathcal C$ is monotone increasing.

\end{lemma}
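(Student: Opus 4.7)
I would set $w_\varepsilon := (1-\varepsilon)u + \varepsilon V_\theta$ and $\chi_\varepsilon := P[w_\varepsilon]$ for $\varepsilon \in (0,1)$, with the goal of showing that $\chi := \lim_{\varepsilon \to 0^+} \chi_\varepsilon$ is simultaneously a member and the largest element of $\mathcal F_u$. First, since $u \leq 0$ forces $u \leq V_\theta$, the convex combination $w_\varepsilon = u + \varepsilon(V_\theta - u)$ is $\theta$-psh with $u \leq w_\varepsilon \leq 0$ and $w_\varepsilon \searrow u$ pointwise as $\varepsilon \searrow 0^+$. Correspondingly $[u] \leq [w_\varepsilon]$ and the family $[w_\varepsilon]$ decreases; since $P[\cdot]$ is monotone in the singularity type (the candidate set enlarges as the type becomes less singular), $\chi_\varepsilon$ decreases as well. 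As $w_\varepsilon$ itself is a candidate in the supremum defining $P[w_\varepsilon]$, one has $u \leq w_\varepsilon \leq \chi_\varepsilon$, whence $\chi \geq u$ and in particular $\chi \in \psh(X,\theta)$.

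Next I would verify $\chi \in \mathcal F_u$: $\chi \leq 0$ and $[u] \leq [\chi]$ are immediate, so only the mixed-mass identities are at stake. Since $[\chi_\varepsilon] = [P[w_\varepsilon]] = [w_\varepsilon]$, potentials of the same singularity type share all mixed masses (monotonicity in both directions), giving
\[
\int_X \theta_{\chi_\varepsilon}^k \wedge \theta_{V_\theta}^{n-k} = \int_X \theta_{w_\varepsilon}^k \wedge \theta_{V_\theta}^{n-k}.
\]
Expanding $\theta_{w_\varepsilon} = (1-\varepsilon) \theta_u + \varepsilon \theta_{V_\theta}$ by multilinearity of the non-pluripolar product yields
\[
\int_X \theta_{w_\varepsilon}^k \wedge \theta_{V_\theta}^{n-k} = \sum_{j=0}^k \binom{k}{j} (1-\varepsilon)^j \varepsilon^{k-j} \int_X \theta_u^j \wedge \theta_{V_\theta}^{n-j},
\]
which tends to $\int_X \theta_u^k \wedge \theta_{V_\theta}^{n-k}$ as $\varepsilon \to 0^+$. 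Combining with $[\chi] \leq [\chi_\varepsilon]$ (yielding an upper bound on the mixed masses of $\chi$) and $[u] \leq [\chi]$ (the matching lower bound) forces the required equality.

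To establish $\mathcal C(u) \leq \chi$, I would take $v \in \mathcal F_u$ and argue $v \leq \chi$. When $\int_X \theta_u^n > 0$, the hypotheses $[u] \leq [v]$ and $\int_X \theta_v^n = \int_X \theta_u^n$ place $u$ in $\mathcal E(X,\theta,v)$, so Theorem~\ref{thm: DDL2_E_char} produces $P[v] = P[u]$. Hence $v \leq P[v] = P[u] \leq P[w_\varepsilon] = \chi_\varepsilon$, the last step from $[u] \leq [w_\varepsilon]$ and monotonicity of $P[\cdot]$; letting $\varepsilon \to 0^+$ gives $v \leq \chi$. The mass-zero case reduces to the positive-mass one by noting that $v_\delta := (1-\delta)v + \delta V_\theta$ lies in $\mathcal F_{w_\delta}$ (the same multilinear expansion verifies the mass identities) while $\int_X \theta_{w_\delta}^n \geq \delta^n \int_X \theta_{V_\theta}^n > 0$; applying the positive-mass result to $w_\delta$ and letting $\delta \to 0^+$ completes the argument. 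The monotonicity of $\mathcal C$ then follows from the formula, since $[\phi] \leq [\psi]$ implies $[(1-\varepsilon)\phi + \varepsilon V_\theta] \leq [(1-\varepsilon)\psi + \varepsilon V_\theta]$ and the $\varepsilon$-wise monotonicity of $P[\cdot]$ survives the limit.

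The hard part will be the mixed-mass equality for $\chi$: the lower bound $\int_X \theta_u^k \wedge \theta_{V_\theta}^{n-k} \leq \int_X \theta_\chi^k \wedge \theta_{V_\theta}^{n-k}$ is free from $u \leq \chi$, but the matching upper bound cannot be extracted from pure semi-continuity and requires routing through $\chi_\varepsilon = P[w_\varepsilon]$, using the singularity-type identity $[\chi_\varepsilon] = [w_\varepsilon]$, and then exploiting the exact multilinear decomposition of $\theta_{w_\varepsilon}^k$ along the convex combination $(1-\varepsilon)\theta_u + \varepsilon \theta_{V_\theta}$.
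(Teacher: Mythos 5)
There is a genuine gap in the step establishing $\chi \in \mathcal F_u$. You write ``Since $[\chi_\varepsilon] = [P[w_\varepsilon]] = [w_\varepsilon]$, potentials of the same singularity type share all mixed masses,'' but the identity $[P[w]] = [w]$ is \emph{false} in general. One always has $[w] \leq [P[w]]$ (because $w - \sup_X w$ is a candidate in the envelope), but $P[w]$ can be strictly less singular than $w$; indeed, the condition $[P[\phi]] = [\phi]$ is exactly what the literature (including this paper) calls a \emph{model type} singularity, and the whole point of that notion is that it fails for generic potentials. So same-singularity-type is not the right reason for the mixed-mass equality. What is true, and what the argument actually needs, is that $P[w]$ and $w$ have identical mixed masses — equivalently $P[w] \in \mathcal F_w$ — which is a genuinely nontrivial result cited by the paper from \cite[Proposition~2.1, Theorem~2.2, Remark~2.5]{DDL2}. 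As written, your proof of the mass identity for $\chi_\varepsilon$ appeals to an equality of singularity types that does not hold, so the step needs to be replaced by a direct citation of that result. Once repaired, your squeeze argument ($[u]\leq[\chi]\leq[\chi_\varepsilon]$ plus the multilinear expansion of $\theta_{w_\varepsilon}^k$) does give $\chi \in \mathcal F_u$.

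For the other inclusion $\mathcal C(u) \leq \chi$, your route is valid but differs from the paper's. You split on whether $\int_X \theta_u^n > 0$, use Theorem~\ref{thm: DDL2_E_char} to get $P[v] = P[u]$ for $v \in \mathcal F_u$ in the positive-mass case, and reduce the zero-mass case to the positive-mass one via the perturbation $w_\delta$. The paper avoids this casework entirely: it uses the non-collapsing bound $\int_X \theta_{w_\varepsilon}^n \geq \varepsilon^n \int_X \theta_{V_\theta}^n > 0$ to conclude from \cite[Remark~2.5, Theorem~3.12]{DDL2} that $P[w_\varepsilon]$ is \emph{maximal} in $\mathcal F_{w_\varepsilon}$, combines this with the scaling inclusion $(1-\varepsilon)\mathcal F_u + \varepsilon V_\theta \subset \mathcal F_{w_\varepsilon}$ (a consequence of multilinearity), and lets $\varepsilon \to 0^+$. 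The paper's version is a bit more streamlined since the perturbation does the work in both directions simultaneously; your version is also correct but the zero-mass reduction step is stated rather tersely and deserves a line or two showing that $\inf_{\varepsilon'' > \delta} \chi_{\varepsilon''} \searrow \chi$ as $\delta \to 0^+$.
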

\begin{proof}We have that $\lim_{\varepsilon\to 0^+} ((1-\varepsilon)u+\varepsilon V_\theta)= u$ and that $u_{\varepsilon}:=P{[(1-\varepsilon)u+ \varepsilon V_\theta]}\geq P{[u]}\geq  u$ is decreasing as $\varepsilon\rightarrow 0+$. We denote by $u_0$ its limit.
Next, we claim that $u_0\in \mathcal{F}_u$. First, observe that since $u_0\geq u$, by \cite[Theorem 1.1]{DDL2} we have that, fixing $k \in \{1,2,...,n\}$,
$$\int_X \theta_{u_0}^k \wedge \theta_{V_\theta}^{n-k}\geq \int_X \theta_{u}^k \wedge \theta_{V_\theta}^{n-k}.$$ Moreover, it follows from the multilinearity of the non-pluripolar product that for all $\varepsilon >0$, 
\begin{eqnarray*}
\int_X \theta_{u_0}^k \wedge \theta_{V_\theta}^{n-k}  &\leq & \int_X \theta_{u_\varepsilon}^k \wedge \theta_{V_\theta}^{n-k} 
 =  \int_X \theta_{(1-\varepsilon)u+\varepsilon V_\theta }^k \wedge \theta_{V_\theta}^{n-k}
 =  (1-\varepsilon)^k\int_X \theta_{u}^k \wedge \theta_{V_\theta}^{n-k} + O(\varepsilon),
\end{eqnarray*}
where in the first inequality we used \cite[Theorem 1.1]{DDL2}, in the first equality we used \cite[Proposition 2.1, Theorem 2.2] {DDL2} and the definition of $P{[(1-\varepsilon)u+\varepsilon V_\theta]}$, and in the second equality we used the multilinearity of the non-pluripolar product. Letting $\varepsilon\to 0$ we prove the claim, hence $u_0\leq \mathcal{C}(u)$. 

On the other hand, as $(1-\varepsilon)u+\varepsilon V_\theta$ satisfies the non-collapsing condition $\int_X \theta_{(1-\varepsilon)u+\varepsilon V_\theta}^n \geq \varepsilon^n \int_X \theta_{V_\theta}^n>0$, it follows from \cite[Remark 2.5]{DDL2} and \cite[Theorem 3.12]{DDL2} that $P{[(1-\varepsilon)u+\varepsilon V_\theta]}$ is the maximal element of $\mathcal{F}_{(1-\varepsilon)u+\varepsilon V_\theta}$. Due to multilinearity of non-pluripolar products, it follows from the above definition of $\mathcal{F}_u$ that 
\begin{equation}\label{eq: scaled_inclusion}
(1-\varepsilon) \mathcal{F}_u + \varepsilon V_\theta \subset \mathcal{F}_{(1-\varepsilon)u+\varepsilon V_\theta}.
\end{equation}
This implies that $(1-\varepsilon)\mathcal{C}(u) +\varepsilon V_\theta\leq P{[(1-\varepsilon)u+\varepsilon V_\theta]}$. Hence letting $\varepsilon\to 0^+$ we obtain $\mathcal{C}(u)\leq u_0$, proving the first statement. The last statement follows from \eqref{eq: ceiling_envelope_id} together with the fact that if $[\phi]\leq [\psi]$ then $P{[(1-\varepsilon)\phi+\varepsilon V_\theta]}\leq  P{[(1-\varepsilon)\psi+\varepsilon V_\theta]}$. 
\end{proof}

In this work, we say that a potential $\phi \in \textup{PSH}(X,\theta)$ is a \emph{model} potential if $\phi = \mathcal C(\phi)$, i.e., if $\phi$ is a fixed point of $\mathcal C$. Similarly, the corresponding singularity types $[\phi]$ are called model type singularities. We note that this definition is seemingly different from the one in \cite{DDL2,DDL3,DDL4}, where we said that $\phi$ is model in case $\phi = P[\phi]$! Thankfully, this inconsistency will cause little to no disruption: in the important particular case of non-vanishing mass, i.e. $\int_X \theta_{\phi}^n > 0$,  \cite[Remark 2.5, Theorem 3.12]{DDL2} gives that $P[\phi]=\mathcal C(\phi)$, hence these two definitions are indeed the same. We predict that this is the case in general as well:
\begin{conj} For any $\phi \in \textup{PSH}(X,\theta)$ we have that $P[\phi]=\mathcal C(\phi)$.
\end{conj}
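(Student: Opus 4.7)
The positive mass case is already established via Remark 2.5 and Theorem 3.12 of \cite{DDL2}, so the plan is to focus on the zero mass case, $\int_X \theta_\phi^n = 0$. One first normalizes $\phi \leq V_\theta$ by subtracting a constant, using that $V_\theta$ has minimal singularity and that both $P[\phi]$ and $\mathcal{C}(\phi)$ depend only on $[\phi]$. With this normalization, $\phi$ is itself a competitor in the rooftop envelope $P_\theta(\phi + C, V_\theta)$ for every $C \geq 0$, so $\phi \leq P[\phi] \leq V_\theta$. The easy direction $P[\phi] \leq \mathcal{C}(\phi)$ then follows from Lemma \ref{lem: ceiling as limit}: monotonicity of $P_\theta[\cdot]$ (immediate from the rooftop definition by shifting the constant $C$) applied to $\phi_\varepsilon := (1-\varepsilon)\phi + \varepsilon V_\theta \geq \phi$ yields $P[\phi_\varepsilon] \geq P[\phi]$, and passing to the decreasing limit $P[\phi_\varepsilon] \searrow \mathcal{C}(\phi)$ gives $\mathcal{C}(\phi) \geq P[\phi]$.

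The harder inclusion $\mathcal{C}(\phi) \leq P[\phi]$ I would reduce to proving $[\mathcal{C}(\phi)] \leq [\phi]$, i.e.\ $\mathcal{C}(\phi) \leq \phi + C$ for some $C \geq 0$. Indeed, combined with $\mathcal{C}(\phi) \leq V_\theta$ (built into the defining family $\mathcal{F}_\phi$), this would place $\mathcal{C}(\phi)$ among the competitors of $P_\theta(\phi + C, V_\theta)$, and hence $\mathcal{C}(\phi) \leq P_\theta(\phi + C, V_\theta) \leq P[\phi]$. In the positive mass regime the analogous reduction is resolved by the relative full mass characterization of Theorem \ref{thm: DDL2_E_char} applied with base $\mathcal{C}(\phi)$: since $\phi \preceq \mathcal{C}(\phi)$ with equal total mass, one has $\phi \in \mathcal{E}(X,\theta, \mathcal{C}(\phi))$, and the resulting chain $P[\phi] = P[\mathcal{C}(\phi)] = \mathcal{C}(\mathcal{C}(\phi)) = \mathcal{C}(\phi)$ (using idempotency $\mathcal{F}_{\mathcal{C}(\phi)} \subseteq \mathcal{F}_\phi$) forces $[\mathcal{C}(\phi)] = [\phi]$.

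I expect this final step to be the main obstacle. In the zero mass regime the relative full mass framework of \cite{DDL2} degenerates, so one cannot simply propagate the known identity $[\mathcal{C}(\phi_\varepsilon)] = [\phi_\varepsilon]$ through the monotone limit: the implicit constant $c(\varepsilon)$ with $\mathcal{C}(\phi_\varepsilon) \leq \phi_\varepsilon + c(\varepsilon)$ may blow up as $\varepsilon \to 0^+$ precisely on the locus where $V_\theta - \phi$ is unbounded. A workable resolution will likely require either an intrinsic detection of $[\phi]$ inside $\mathcal{C}(\phi)$ through Lelong numbers or multiplier ideal sheaves (in the spirit of Section \ref{sect: ideal sheaves}), or a uniform-in-$\varepsilon$ quantitative version of the positive mass identity that survives the collapse of total mass.
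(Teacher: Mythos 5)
This statement is labeled a \emph{conjecture} in the paper: the authors explicitly leave it open, establishing it only in the positive mass case $\int_X \theta_\phi^n > 0$ (Proposition~\ref{prop: C_operator_properties}(i), via \cite[Remark 2.5, Theorem 3.12]{DDL2}). There is therefore no proof of record to compare against. Your recapitulation of the easy inclusion $P[\phi] \leq \mathcal{C}(\phi)$ is correct and is in fact already carried out inside the proof of Lemma~\ref{lem: ceiling as limit}, and your assessment that the difficulty is confined to the zero mass regime agrees with why the authors left this open.

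However, the reduction you propose for the hard inclusion is unsound as a strategy. You reduce $\mathcal{C}(\phi) \leq P[\phi]$ to $\mathcal{C}(\phi) \leq \phi + C$, i.e., $[\mathcal{C}(\phi)] \leq [\phi]$. Since Lemma~\ref{lem: ceiling as limit} also gives $\mathcal{C}(\phi) \in \mathcal{F}_\phi$, hence $[\phi] \leq [\mathcal{C}(\phi)]$, your target would force $[\mathcal{C}(\phi)] = [\phi]$; coupled with the conjecture $\mathcal{C}(\phi) = P[\phi]$ this would say $[P[\phi]] = [\phi]$, i.e., that $\phi$ has model type singularity. This is false generically, even in the positive mass regime where you assert the reduction ``is resolved.'' Concretely, take $\theta = \omega$ K\"ahler (so $V_\theta = 0$) and $\phi \in \mathcal{E}(X,\omega)$ unbounded; then $\mathcal{C}(\phi) = P[\phi] = 0$ and the conjecture holds, yet $0 = \mathcal{C}(\phi) \leq \phi + C$ would force $\phi \geq -C$, contradicting unboundedness. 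So your chain $P[\phi] = P[\mathcal{C}(\phi)] = \mathcal{C}(\mathcal{C}(\phi)) = \mathcal{C}(\phi)$ correctly reproves the conjecture in positive mass, but it does not and cannot force $[\mathcal{C}(\phi)] = [\phi]$. The envelope that $\mathcal{C}(\phi)$ should be compared against is $P_\theta(P[\phi]+C, V_\theta) = P[\phi]$, not $P_\theta(\phi+C, V_\theta)$; since $[P[\phi]]$ is typically strictly less singular than $[\phi]$, substituting the latter replaces the goal with a strictly stronger and false statement. The obstruction you correctly identify at the end (blow-up of constants in $P[\phi_\varepsilon]$ as $\varepsilon \to 0^+$ when mass collapses) is real, but it concerns the singularity type of $P[\phi_\varepsilon]$, not of $\phi_\varepsilon$, so the reduction needs to be reformulated before any such quantitative estimate could close the gap.
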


To finish this paragraph we list and prove a number of  basic properties of the ceiling operator:

\begin{prop} \label{prop: C_operator_properties} Suppose $v \in \textup{PSH}(X,\theta)$. The following hold:\\
\noindent (i) if $\int_X \theta_v^n>0$ then $\mathcal C(v) = P[v]$.\\
\noindent (ii) $\lim_{\varepsilon \rightarrow 0} \mathcal C((1-\varepsilon)v+\varepsilon V_\theta) =\lim_{\varepsilon \rightarrow 0} P{[(1-\varepsilon) v+\varepsilon V_\theta]} = \mathcal C(v)$.\\
\noindent (iii) $\mathcal C(\mathcal C(v))=\mathcal C(v)$.\\
\noindent (iv) $\mathcal C(P[v])=\mathcal C(v)$.
\end{prop}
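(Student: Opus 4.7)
The plan is to prove the four parts in sequence: (i) is a direct citation, (ii) combines (i) with Lemma \ref{lem: ceiling as limit}, (iii) is a routine verification using the defining family $\mathcal F_v$, and (iv) I would derive from (iii) together with the monotonicity of $\mathcal C$ recorded in Lemma \ref{lem: ceiling as limit}.

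For (i), the paragraph preceding the proposition already attributes the identity $\mathcal C(v)=P[v]$ in the non-vanishing mass case to \cite[Remark~2.5, Theorem~3.12]{DDL2}, so this step amounts to a reference. For (ii), the second equality is precisely Lemma \ref{lem: ceiling as limit}. For the first equality, I observe that $(1-\varepsilon)v+\varepsilon V_\theta$ has total non-pluripolar Monge--Amp\`ere mass bounded below by $\varepsilon^n\int_X\theta_{V_\theta}^n>0$, so part (i) applies to give $\mathcal C((1-\varepsilon)v+\varepsilon V_\theta)=P[(1-\varepsilon)v+\varepsilon V_\theta]$ for every $\varepsilon>0$; Lemma \ref{lem: ceiling as limit} then passes to the limit.

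For (iii), I would show the two inequalities separately. The inequality $\mathcal C(v)\le \mathcal C(\mathcal C(v))$ follows from the monotonicity of $\mathcal C$ applied to $[v]\le [\mathcal C(v)]$, an ordering encoded in the very membership $\mathcal C(v)\in\mathcal F_v$. For $\mathcal C(\mathcal C(v))\le \mathcal C(v)$, I verify that $\mathcal C(\mathcal C(v))\in\mathcal F_v$: from $\mathcal C(\mathcal C(v))\in\mathcal F_{\mathcal C(v)}$ one reads off $\mathcal C(\mathcal C(v))\le 0$, the chain $[v]\le[\mathcal C(v)]\le[\mathcal C(\mathcal C(v))]$, and the equalities of mixed masses $\int_X\theta_{\mathcal C(\mathcal C(v))}^k\wedge\theta_{V_\theta}^{n-k}=\int_X\theta_{\mathcal C(v)}^k\wedge\theta_{V_\theta}^{n-k}=\int_X\theta_v^k\wedge\theta_{V_\theta}^{n-k}$ for all $k\in\{0,\ldots,n\}$. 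Maximality of $\mathcal C(v)$ in $\mathcal F_v$ then yields the inequality.

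For (iv), the forward inequality $\mathcal C(v)\le\mathcal C(P[v])$ is immediate from the monotonicity of $\mathcal C$ together with $v\le P[v]$. For the reverse, the key input is $\mathcal C(v)\ge P[v]$, which is already visible in the proof of Lemma \ref{lem: ceiling as limit}: each approximant $P[(1-\varepsilon)v+\varepsilon V_\theta]$ dominates $P[v]$ because $v\le(1-\varepsilon)v+\varepsilon V_\theta$ implies $[v]\le[(1-\varepsilon)v+\varepsilon V_\theta]$ and $P[\cdot]$ is monotone in singularity type. Hence $[P[v]]\le[\mathcal C(v)]$, so monotonicity combined with (iii) yields $\mathcal C(P[v])\le \mathcal C(\mathcal C(v))=\mathcal C(v)$. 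The main conceptual obstacle in (iv) is that one cannot appeal directly to an inclusion $\mathcal F_{P[v]}\subseteq \mathcal F_v$, since matching the mixed masses of $v$ and $P[v]$ at intermediate degrees $k<n$ is not among the results available at this point of the paper (only the top-degree equality is quoted); the detour through (iii) sidesteps this difficulty.
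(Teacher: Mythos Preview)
Your proof is correct and follows essentially the same approach as the paper: (i) by citation, (ii) via (i) and Lemma~\ref{lem: ceiling as limit}, (iii) by combining $\mathcal F_{\mathcal C(v)}\subset\mathcal F_v$ with monotonicity, and (iv) from the sandwich $v\le P[v]\le\mathcal C(v)$ together with monotonicity and (iii). The only minor difference is in how you obtain $P[v]\le\mathcal C(v)$ in (iv): the paper invokes \cite[Remark~2.5]{DDL2} to assert directly that $P[v]\in\mathcal F_v$ (so $P[v]\le\mathcal C(v)$ by maximality), whereas you deduce it from the limit formula in Lemma~\ref{lem: ceiling as limit} via $P[v]\le P[(1-\varepsilon)v+\varepsilon V_\theta]$; your route is slightly more self-contained but otherwise equivalent, and your closing caveat about not having the intermediate mixed-mass equalities for $P[v]$ turns out to be unnecessary since the paper simply cites that fact.
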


\begin{proof}
The first property is a consequence of \cite[Theorem 2.5, Theorem 3.12]{DDL2}. 
The statement in $(ii)$ follows from Lemma \ref{lem: ceiling as limit} together with $(i)$. To prove $(iii)$ we note that $\mathcal F_{\mathcal C(v)} \subset \mathcal F_v$ implies that $\mathcal C(\mathcal C(v)) \leq \mathcal C(v)$ (via Lemma \ref{lem: ceiling as limit}). Since $v \leq \mathcal C(v)$, the other inequality in (iii) follows from monotonicity of $\mathcal C$.
Note that \cite[Remark 2.5]{DDL2} implies that $P[v] \in \mathcal F_v$, hence  $v \leq P[v]\leq \mathcal C(v)$. Applying $\mathcal C$ to these inequalities together with $(iii)$ gives $(iv)$.
\end{proof}

\paragraph*{The Monge--Amp\`ere energy $I$ and the finite energy class $\mathcal E^1(X,\theta)$.}
As further evidenced by the next lemma, potentials with model type singularity play a distinguished role in the theory (see  \cite[Lemma 2.2]{DDL4} for a more precise result): 

\begin{lemma}\label{lem: compactness and model type}
Let $\phi \in \PSH(X,\theta)$ and $\phi= P[\phi]$. Then, for all $u\in \PSH(X,\theta,\phi)$  we have $\sup_X (u-\phi) =\sup_X u$, and  the set 
$$\mathcal{F}:=\{u \in \PSH(X,\theta,\phi) \setdef \sup_X (u-\phi)=0\}$$ 
is relatively compact in the $L^1$-topology of potentials.
\end{lemma}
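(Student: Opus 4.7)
The plan is to split the statement into the sup-identity $\sup_X(u-\phi) = \sup_X u$ and the $L^1$-compactness of $\mathcal{F}$, treating the latter as an immediate consequence of the former combined with a standard fact.

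For the identity, I would first verify that $\phi \leq 0$. This follows directly from $\phi = P[\phi]$, because $P[\phi]$ is by definition the upper semicontinuous regularization of $\lim_{C\to+\infty} P_\theta(\phi+C,V_\theta)$, and each approximant is bounded above by $V_\theta \leq 0$. Consequently the easy inequality $\sup_X u \leq \sup_X(u-\phi)$ holds, since $-\phi \geq 0$. For the reverse inequality, put $M:=\sup_X u$ and form $w:=u-M$. Then $w$ is $\theta$-psh, $w \leq 0$, and $[w]=[u] \leq [\phi]$ because $u \in \textup{PSH}(X,\theta,\phi)$. Thus $w$ is an admissible competitor in the family defining $P[\phi] = \sup\{v \in \textup{PSH}(X,\theta) \setdef [v] \leq [\phi],\ v \leq 0\}$, so $w \leq P[\phi] = \phi$. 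Rearranging yields $u-\phi \leq M$ pointwise, hence $\sup_X(u-\phi) \leq M$.

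For the relative compactness of $\mathcal{F}$, the identity just proved shows that $\mathcal{F} = \{u \in \textup{PSH}(X,\theta,\phi) \setdef \sup_X u = 0\} \subset \{u \in \textup{PSH}(X,\theta) \setdef \sup_X u = 0\}$, and the right-hand set is $L^1$-compact by the classical compactness result for normalized $\theta$-psh functions on a compact K\"ahler manifold (uniform $L^1$ bound plus Hartogs-type arguments). Being a subset of a compact set, $\mathcal{F}$ is relatively compact in $L^1$. The only non-routine observation is that the translate $w = u - \sup_X u$ is an admissible candidate in the defining supremum of $P[\phi]$; the remaining ingredients are immediate from the definitions and from the standard compactness of normalized potentials.
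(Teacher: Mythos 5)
Your proof is correct and follows essentially the same route as the paper's: $\phi\le 0$ gives one inequality, and for the other you observe that $u-\sup_X u$ is a negative $\theta$-psh function more singular than $\phi$, hence a competitor in the defining family of $P[\phi]=\phi$, giving $u-\sup_X u\le\phi$; compactness then comes from the classical compactness of $\{u\in\PSH(X,\theta):\sup_X u=0\}$ (the paper cites \cite[Proposition 8.5]{GZ17} for this). The only difference is that you spell out explicitly why $u-\sup_X u\le\phi$, a step the paper leaves implicit.
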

\begin{proof} Let $u \in \textup{PSH}(X,\theta, \phi)$. Since $\phi \leq 0$ it follows that $\sup_X (u - \phi) \geq \sup_X u$. For the other direction we notice that $u - \sup_X u \leq  \phi$, hence $\sup_X (u-\phi) \leq \sup_X u$. Relative compactness of $\mathcal F$ then follows from \cite[Proposition 8.5]{GZ17}.
\end{proof}

We define the Monge-Amp\`ere energy of any $\theta$-psh function $u$ with minimal singularities as 
$$
\mathrm{I} (u) :=\frac{1}{n+1} \sum_{k=0}^n \int_X (u-V_\theta) \theta_u^k \wedge\theta_{V_\theta}^{n-k}. 
$$
We then define the Monge-Amp\`ere energy for arbitrary $u\in \mathrm{PSH}(X,\theta)$ as
$$
I(u) : =\inf \{I(v) \setdef  v\in \mathrm{PSH} (X,\theta), \; v\ \textrm{has minimal singularities, and } u\leq v\}. 
$$
We let $\mathcal{E}^1(X,\theta)$ denote the set of all $u\in \psh(X,\theta)$  such that $I(u)$ is finite. Since $\theta$ will be fixed throughout the paper we will occasionally denote this space simply as $\mathcal{E}^1$.
In the next theorem we collect basic properties of the Monge-Amp\`ere energy:

\medskip 

\begin{theorem}\label{thm: basic I energy} Suppose $u,v \in \mathcal{E}^1(X,\theta)$. The following hold:\\
\noindent (i) $\AM(u)-\mathrm{I}(v) = \frac{1}{n+1}\sum_{k=0}^n \int_X (u-v) \theta_{u}^k \wedge \theta_{v}^{n-k}.$\\
\noindent (ii) If $u\leq V_\theta$ then, $
\int_X (u-V_\theta) \theta_u^n \leq \AM(u) \leq \frac{1}{n+1} \int_X (u-V_\theta) \theta_{u}^n. $ \\
\noindent (iii) $\AM$ is non-decreasing and concave along affine curves. Additionally, the following estimates hold: $
	\int_X (u-v) \theta_u^n \leq \AM(u) -\AM(v) \leq \int_X (u-v) \theta_v^n.$
\end{theorem}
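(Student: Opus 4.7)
The plan is to prove (i) first and then deduce (ii) and (iii) from it together with integration-by-parts monotonicity of the relevant mixed Monge--Amp\`ere integrals. The natural starting point is the minimal singularities case of (i): when $u,v$ both have minimal singularities, the affine path $u_t := v + t(u-v)$, $t \in [0,1]$, stays in the minimal singularities locus, and a single integration by parts gives the standard derivative formula $\frac{d}{dt}\AM(u_t) = \int_X (u-v)\theta_{u_t}^n$. Integrating over $[0,1]$ and expanding $\theta_{u_t}^n = ((1-t)\theta_v + t\theta_u)^n$ via the binomial theorem, the coefficients collapse thanks to the Beta identity $\binom{n}{k}\int_0^1 t^k(1-t)^{n-k}\,dt = \frac{1}{n+1}$, producing exactly the right-hand side of (i). To pass to arbitrary $u,v \in \mathcal E^1(X,\theta)$, I would truncate by $u_j := \max(u,V_\theta-j)$ and $v_j := \max(v,V_\theta-j)$: these have minimal singularities and decrease to $u,v$, while on each plurifine open set $\{u > V_\theta - j\}\cap\{v > V_\theta - j\}$ the mixed products $\theta_{u_j}^k \wedge \theta_{v_j}^{n-k}$ agree with $\theta_u^k \wedge \theta_v^{n-k}$ by Lemma \ref{lem: plurifine}. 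Combined with Theorem \ref{thm: lsc of non pluripolar product} and the monotone definition of $\AM$ outside the minimal singularities class, the identity (i) passes to the limit.

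For (ii) I would specialize (i) to $v=V_\theta$, writing $a_k := \int_X (u-V_\theta)\theta_u^k \wedge \theta_{V_\theta}^{n-k}$ so that $\AM(u) = \frac{1}{n+1}\sum_{k=0}^n a_k$. A standard integration by parts (first for minimal singularities, then extended via the truncation scheme above) yields
$$a_{k+1} - a_k \;=\; -\int_X i\partial(u-V_\theta)\wedge\bar\partial(u-V_\theta) \wedge \theta_u^k \wedge \theta_{V_\theta}^{n-k-1} \;\leq\; 0,$$
so $a_0 \geq a_1 \geq \cdots \geq a_n$. The hypothesis $u \leq V_\theta$ forces each $a_k \leq 0$, hence the average $\frac{1}{n+1}\sum a_k$ is bounded below by $a_n$ (from monotonicity of the sequence) and above by $\frac{a_n}{n+1}$ (by discarding the non-positive terms $a_0,\ldots,a_{n-1}$), which are precisely the two estimates in (ii).

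Part (iii) follows the same template. Monotonicity is immediate from (i), since $u \geq v$ makes each summand $\int_X (u-v)\theta_u^k \wedge \theta_v^{n-k}$ non-negative. Concavity along affine curves reduces to $\frac{d^2}{dt^2}\AM(u_t) = -n\int_X i\partial(u-v)\wedge\bar\partial(u-v)\wedge \theta_{u_t}^{n-1} \leq 0$, the same integration-by-parts computation used in (i). For the sandwich inequalities, the sequence $b_k := \int_X (u-v)\theta_u^k \wedge \theta_v^{n-k}$ is non-increasing by the analogous integration by parts, and (i) writes $\AM(u)-\AM(v) = \frac{1}{n+1}\sum_{k=0}^n b_k$, automatically sandwiched between $b_n$ and $b_0$. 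The main technical obstacle throughout will be justifying integration by parts outside the minimal singularities regime; the plan is to use the truncation $\max(\,\cdot\,, V_\theta - j)$, exploit plurifine locality to replace mixed products by ones involving truncated (hence locally bounded modulo smooth functions) potentials on $\{\,\cdot\, > V_\theta - j\}$, and then invoke Theorem \ref{thm: lsc of non pluripolar product} together with the monotonicity of non-pluripolar masses to take the limit $j \to \infty$.
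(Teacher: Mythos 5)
The paper does not give a self-contained proof of this theorem, referring instead to \cite[Theorem 2.1 and Proposition 2.2]{DDL3}; your reconstruction follows the standard route used there, and all the key steps are correct: the affine path and derivative formula plus the Beta identity for (i) in the minimal-singularity case, truncation by $\max(\cdot, V_\theta - j)$ with plurifine locality to extend to $\mathcal E^1$, and the monotone sequence $a_0 \geq a_1 \geq \cdots \geq a_n$ (resp.\ $b_k$) obtained via integration by parts to squeeze out (ii) and the sandwich inequalities in (iii). The only place where your sketch understates the work is the convergence of the signed mixed integrals $\int_X (u_j - v_j)\,\theta_{u_j}^k\wedge\theta_{v_j}^{n-k}$ to $\int_X (u-v)\,\theta_u^k\wedge\theta_v^{n-k}$, which is not immediate from lower semicontinuity (Theorem \ref{thm: lsc of non pluripolar product}) alone and requires the uniform integrability/Cauchy--Schwarz estimates specific to the $\mathcal E^1$ class — but you correctly flag this as the technical core, and it is exactly what \cite{DDL3} supplies.
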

\noindent In particular, we observe that $u\leq v$ implies $I(u)\leq I(v)$. We refer to \cite[Theorem 2.1 and Proposition 2.2]{DDL3} for a proof.

\begin{lemma}\label{lem: energy equal}
Let $u, v\in \mathcal{E}^1(X,\theta)$ be such that $u\geq v$ and $I(u)=I(v)$. Then $u=v$.
\end{lemma}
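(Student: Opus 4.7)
The plan is to convert the energy equality $I(u)=I(v)$ into a measure-theoretic identity via Theorem~\ref{thm: basic I energy}(i), and then to upgrade that identity to a pointwise identity via the domination principle for the finite energy class.

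Concretely, I would first apply Theorem~\ref{thm: basic I energy}(i) to obtain
$$
0 \;=\; I(u) - I(v) \;=\; \frac{1}{n+1}\sum_{k=0}^n \int_X (u-v)\,\theta_u^k\wedge\theta_v^{n-k}.
$$
Since $u\geq v$, every integrand $(u-v)\,\theta_u^k\wedge\theta_v^{n-k}$ is a non-negative Borel measure on $X$; in a vanishing sum of non-negative terms, each term must vanish individually. In particular the $k=n$ term yields
$$
\int_X (u-v)\,\theta_u^n \;=\; 0,
$$
and since $u-v\geq 0$, this forces $u = v$ on a set of full $\theta_u^n$-measure. (One could just as well have used the sharper one-sided inequality $\int_X(u-v)\theta_u^n \leq I(u)-I(v)$ from Theorem~\ref{thm: basic I energy}(iii), arriving at the same conclusion.)

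The final step is the domination principle: since $u,v\in \mathcal E^1(X,\theta)\subseteq \mathcal E(X,\theta, V_\theta)$, the $\theta_u^n$-a.e.\ inequality $u \leq v$ in fact holds on all of $X$, and combining with $v\leq u$ gives $u = v$. The main obstacle I anticipate is precisely this last upgrade: a $\theta_u^n$-a.e.\ identity does not in general imply a pointwise identity, because $\theta_u^n$ may concentrate on a set whose complement is large, so bridging the gap rests entirely on the domination principle for $\mathcal E(X,\theta, V_\theta)$ (cf.\ \cite{DDL2}), which is the key external tool invoked. As a consistency check, concavity of $I$ along the affine segment $u_t := (1-t)v + tu$ together with $I(u_0)=I(u_1)$ forces $t \mapsto I(u_t)$ to be constant, so by Theorem~\ref{thm: basic I energy}(iii) one actually has the whole family $\int_X (u-v)\,\theta_{u_t}^n = 0$, $t\in[0,1]$, reinforcing the rigidity supplied by the domination principle.
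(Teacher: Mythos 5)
Your proof is correct and follows the same route as the paper: expand $I(u)-I(v)$ via Theorem~\ref{thm: basic I energy}(i) into a sum of non-negative terms, each of which must vanish, and then feed the resulting measure-theoretic vanishing into a domination principle. The only cosmetic difference is the choice of term: the paper takes $k=0$, obtaining $\theta_v^n(\{u>v\})=0$ and citing \cite[Proposition 2.4]{DDL1}, whereas you take $k=n$, obtaining $\theta_u^n(\{u>v\})=0$ and appealing to the BEGZ-type domination principle for full-mass potentials; either variant delivers $u\leq v$, hence $u=v$.
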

\begin{proof}
Since $I(u)=I(v)$, and $u\geq v$, Theorem \ref{thm: basic I energy}$(i)$ implies that $\int_X (u-v)\theta_v^n= 0$.  We then have  $\theta_v^n(u>v)=0$, hence the domination principle \cite[Proposition 2.4]{DDL1}  gives $u=v$. 
\end{proof}

We recall that, given $u,v \in \mathcal E^1(X,\theta)$, it has been shown in \cite[Theorem 2.10]{DDL1} that $P(u,v)$ belongs to $\Ec^1(X,\theta)$ as well. As in \cite[Section 3]{DDL3} we define:
\begin{equation}\label{def d1}
d_1(u,v)=\AM(u) + \AM(v) - 2\AM(P(u,v)).
\end{equation}
By \cite[Theorems 1.1]{DDL3} the space $(\mathcal E^1(X,\theta),d_1)$ is a complete geodesic metric space whose geodesic segments arise as $d_1$-limits to solutions to a degenerate complex Monge-Amp\`ere equation (they are sometimes referred to as finite energy geodesics). Also, the Monge-Amp\`ere energy is linear along these geodesics.
In the following we adapt some results of \cite{BDL15} and \cite{Dar15} to the big setting.

\begin{prop}\label{prop 5.1: adaptation BDL}
Suppose $[0, 1] \ni t \rightarrow  u_t , v_t\in \mathcal{E}^1$ are finite energy geodesics. Then the map $t \rightarrow \AM(P (u_t, v_t))$ is concave. Consequently, the map $t \rightarrow d_1(u_t, v_t)$ is convex.
\end{prop}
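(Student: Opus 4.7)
The strategy is to combine three inputs: (a) the linearity of $I$ along finite energy geodesics from \cite{DDL3}, (b) the monotonicity of $I$ under pointwise comparison recorded in Theorem \ref{thm: basic I energy}(iii), and (c) a comparison principle for finite energy geodesics with respect to their endpoints. Since concavity on $[0,1]$ is equivalent to the three-point concavity inequality on every subinterval, and since the affine reparametrization of a finite energy geodesic restricted to any subinterval is again a finite energy geodesic, it suffices to prove
\begin{equation*}
I(P(u_t, v_t)) \geq (1-t)\, I(P(u_0, v_0)) + t\, I(P(u_1, v_1)), \qquad t \in [0,1].
\end{equation*}

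First I would construct the finite energy geodesic $[0,1] \ni t \mapsto w_t \in \mathcal{E}^1(X,\theta)$ joining $P(u_0, v_0)$ to $P(u_1, v_1)$; these endpoints lie in $\mathcal{E}^1(X,\theta)$ by \cite[Theorem 2.10]{DDL1}. The core claim is then $w_t \leq P(u_t, v_t)$ for every $t \in [0,1]$. Once established, Theorem \ref{thm: basic I energy}(iii) yields $I(w_t) \leq I(P(u_t, v_t))$, while linearity of $I$ along $w_t$ gives $I(w_t) = (1-t)\, I(P(u_0, v_0)) + t\, I(P(u_1, v_1))$, producing the desired inequality.

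To prove $w_t \leq P(u_t, v_t)$, the main technical step will be the following comparison principle: if $a_t, b_t$ are finite energy geodesics in $\mathcal{E}^1(X,\theta)$ on $[0,1]$ with $a_0 \leq b_0$ and $a_1 \leq b_1$, then $a_t \leq b_t$ throughout. In the bounded case this is immediate from the Perron envelope description of the geodesic as the upper envelope of $\theta$-psh subgeodesics dominated by the endpoints: any subgeodesic feasible for $a$ is feasible for $b$. The finite energy version follows by truncating the endpoints via $\max(\cdot, V_\theta - k)$, applying the bounded case to the resulting bounded geodesics, and passing to the limit using the construction of finite energy geodesics as decreasing limits of bounded geodesics in \cite[Theorem 1.1]{DDL3}. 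Applying this principle to $w_t$ against $u_t$ (using $P(u_0,v_0)\leq u_0$ and $P(u_1,v_1)\leq u_1$) and against $v_t$ gives $w_t \leq \min(u_t, v_t)$, so the $\theta$-psh potential $w_t$ is dominated by $P(u_t,v_t)$ by definition of the rooftop envelope.

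Finally, the convexity of $t \mapsto d_1(u_t, v_t)$ is immediate from the definition \eqref{def d1}: linearity of $I$ along $u_t$ and $v_t$ makes $I(u_t)+I(v_t)$ affine, and $-2\,I(P(u_t, v_t))$ is convex by the concavity just established, so their sum is convex. The principal obstacle is arguing the comparison principle rigorously in the unbounded finite energy setting of a big class, where the clean envelope description available for bounded endpoints no longer applies directly; the approximation through truncations and the decreasing-limit construction from \cite{DDL3} is what bridges this gap.
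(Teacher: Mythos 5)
Your proposal is correct and reproduces essentially the argument the paper invokes: the paper's proof of this proposition simply cites \cite[Proposition 5.1]{BDL15} together with \cite[Theorems 2.10, 3.12, Proposition 3.2]{DDL1}, and the cited BDL15 argument is precisely what you reconstruct — build the geodesic $w_t$ joining $P(u_0,v_0)$ to $P(u_1,v_1)$, use the geodesic comparison principle to get $w_t\leq P(u_t,v_t)$, then combine monotonicity with linearity of $I$ along geodesics, with \cite[Proposition 3.2]{DDL1} supplying the comparison principle in the big-class setting in place of your truncation sketch.
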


\begin{proof}
The result follows using the same arguments as \cite[Proposition 5.1]{BDL15} together with \cite[Theorems 2.10, 3.12 and Proposition 3.2]{DDL1}.
\end{proof}
Next we point out that the $d_1$-geodesics are ``endpoint stable'':
\begin{prop}\label{prop 4.3: adaptation BDL}
Let $[0,1]\ni t\rightarrow u_t^j\in \mathcal{E}^1$ be a sequence of finite energy geodesic segments such that $d_1(u_0^j, u_0), d_1(u_1^j, u_1)\rightarrow 0$. Then $d_1(u_t^j, u_t)\rightarrow 0$ for all $t$, where $[0, 1] \ni t \rightarrow  u_t \in \mathcal{E}^1$ is the finite energy geodesic segment connecting $u_0, u_1$.
\end{prop}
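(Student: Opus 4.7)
The plan is to deduce the statement as a direct application of the convexity established in the previous Proposition \ref{prop 5.1: adaptation BDL}, following the strategy from the K\"ahler case in \cite{BDL15}. Since both $[0,1] \ni t \to u_t^j$ and $[0,1] \ni t \to u_t$ are finite energy geodesic segments in $(\mathcal{E}^1(X,\theta),d_1)$, Proposition \ref{prop 5.1: adaptation BDL} applies and yields that the map
\[
[0,1] \ni t \longmapsto d_1(u_t^j, u_t)
\]
is convex for each $j$. Comparing with the affine function interpolating its endpoint values therefore gives
\[
d_1(u_t^j, u_t) \leq (1-t)\, d_1(u_0^j, u_0) + t\, d_1(u_1^j, u_1), \qquad t \in [0,1].
\]

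The conclusion is then immediate: by hypothesis both $d_1(u_0^j, u_0) \to 0$ and $d_1(u_1^j, u_1) \to 0$ as $j \to \infty$, so the right-hand side tends to zero uniformly in $t$, which forces $d_1(u_t^j, u_t) \to 0$ for every $t \in [0,1]$.

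The only nontrivial input is Proposition \ref{prop 5.1: adaptation BDL}, which is already in place. The mild subtlety to check, before invoking convexity, is that $u_t$ and $u_t^j$ are genuinely well-defined elements of $\mathcal{E}^1(X,\theta)$ for each $t \in [0,1]$; this is guaranteed by the existence theory for finite energy geodesics in the big setting recorded in the discussion preceding \eqref{def d1} (cf.\ \cite[Theorem~1.1]{DDL3}), so $d_1(u_t^j, u_t)$ is a legitimate quantity to estimate. I do not anticipate any further obstacle: the proof is essentially a one-line application of convexity to the two endpoints, and the adaptation to the big class does not require any new ingredient beyond what has already been built up in Section 2.
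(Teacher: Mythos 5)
Your proof is correct, but it takes a different route from the one the paper uses. The paper's proof of this proposition is simply the citation to \cite[Proposition 4.3]{BDL15}, and the argument there is a direct, self-contained computation: one bounds $P(u_t^j, u_t)$ from below by the geodesic joining $P(u_0^j, u_0)$ and $P(u_1^j, u_1)$ via the comparison principle, and then the Pythagorean formula \eqref{def d1} together with the linearity of $I$ along finite energy geodesics (\cite[Theorem 3.12]{DDL1}) gives the estimate
$d_1(u_t^j, u_t) \le (1-t)\,d_1(u_0^j,u_0) + t\,d_1(u_1^j,u_1)$
directly, without invoking the convexity result. Your version instead treats Proposition \ref{prop 5.1: adaptation BDL} as a black box and reads off the same estimate from the convexity of $t \mapsto d_1(u_t^j, u_t)$. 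This is cleaner and more modular given that the convexity statement already appears on the page, whereas the BDL15 route is more elementary, needing only a one-sided lower bound on $I(P(u_t^j,u_t))$ rather than the full concavity of $t\mapsto I(P(u_t,v_t))$. One caveat you should make explicit: Proposition \ref{prop 5.1: adaptation BDL} is itself established by importing the BDL15 Proposition 5.1 argument, which approximates general $\mathcal{E}^1$ geodesics by bounded ones in order to reduce to the regular case; to avoid a circle you must check that this approximation step does not already invoke the endpoint stability you are deducing from it. It does not --- the approximation there is by monotone decreasing sequences of bounded geodesics, for which convergence of both the segments and of $I$ is elementary --- so your argument is sound, but this dependency chain is worth noting rather than leaving implicit.
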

\begin{proof}Exactly the same as \cite[Proposition 4.3]{BDL15}.
\end{proof}
\begin{prop}\label{lemma 4.15: adaptation Dar15}
Given $u, v \in \mathcal{E}^1$, we have $|\AM(u) -\AM(v)| \leq d_1(u, v)$.
\end{prop}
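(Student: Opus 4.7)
The plan is to unwrap the definition of $d_1$ and use the monotonicity of $I$. By \eqref{def d1},
$$d_1(u,v) = I(u) + I(v) - 2I(P(u,v)),$$
so the inequality $|I(u)-I(v)| \leq d_1(u,v)$ is equivalent to
$$2I(P(u,v)) \leq I(u) + I(v) - |I(u)-I(v)| = 2\min(I(u), I(v)).$$
Thus it suffices to show $I(P(u,v)) \leq \min(I(u), I(v))$.

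This is immediate from monotonicity of $I$: since $P(u,v) = \sup\{h \in \textup{PSH}(X,\theta) : h \leq \min(u,v)\}$ (when it lies in $\textup{PSH}(X,\theta)$), we have $P(u,v) \leq u$ and $P(u,v) \leq v$ pointwise, and the remark after Theorem \ref{thm: basic I energy} records that $u \leq v$ implies $I(u) \leq I(v)$. Hence $I(P(u,v)) \leq I(u)$ and $I(P(u,v)) \leq I(v)$, giving the desired bound.

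There is essentially no obstacle: the only non-trivial ingredient is that $P(u,v) \in \mathcal{E}^1$ when $u,v \in \mathcal{E}^1$, which was recalled from \cite[Theorem 2.10]{DDL1} just before the definition of $d_1$, so $I(P(u,v))$ is well-defined and finite. The proof thus reduces to a two-line application of monotonicity.
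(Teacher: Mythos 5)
Your proof is correct and is essentially identical to the paper's: both reduce to $I(P(u,v)) \le \min(I(u),I(v))$ via $P(u,v)\le\min(u,v)$ and monotonicity of $I$, then unwind the definition of $d_1$.
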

\begin{proof}
We observe that by the monotonicity of the energy (Theorem \ref{thm: basic I energy}(i)) and since $P(u,v)\leq \min(u, v)$ we have $\AM(P(u,v))\leq \min(\AM(u), \AM(v))$. The result then follows directly from the definition of $d_1$.
\end{proof}

Lastly, we point out  the analog of \cite[Remark 5.6]{Dar15}:

\begin{prop}\label{remark 5.6: adaptation Dar15}
Given $u,v \in \mathcal{E}^1$, there exists $C =C(n) > 1$ such that
$$ d_1(u,v) \leq d_1 (u,\max(u,v)) + d_1(\max(u,v),v)\leq C d_1(u,v). $$
\end{prop}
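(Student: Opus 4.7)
The first inequality is the triangle inequality for $d_1$, which is genuinely a metric on $\mathcal{E}^1(X,\theta)$ by \cite[Theorem 1.1]{DDL3}. For the second inequality, the plan is to reduce to an energy comparison and then expand everything as mixed non-pluripolar integrals.

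The key reduction is that $u \leq \max(u,v)$ together with $u \in \textup{PSH}(X,\theta)$ forces $P_\theta(u, \max(u,v)) = u$ (and symmetrically $P_\theta(\max(u,v), v) = v$). Plugging into \eqref{def d1} collapses two of the four terms in each $d_1$ and yields
\[
d_1(u, \max(u,v)) = I(\max(u,v)) - I(u), \quad d_1(\max(u,v), v) = I(\max(u,v)) - I(v).
\]
Thus the claim reduces to
\[
2 I(\max(u,v)) - I(u) - I(v) \leq C(n)\bigl[I(u) + I(v) - 2 I(P(u,v))\bigr].
\]

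For the left hand side, I would apply the sum formula of Theorem \ref{thm: basic I energy}(i) to the pair $(\max(u,v), u)$ and exploit plurifine locality (Lemma \ref{lem: plurifine}): on the plurifine open set $\{v > u\}$ we have $\max(u,v)=v$, so $\theta_{\max(u,v)}^k$ coincides with $\theta_v^k$ there, while $\max(u,v) - u = 0$ on the complement. Treating the other pair symmetrically and collapsing via the $k \leftrightarrow n-k$ symmetry of the sum, one obtains the clean identity
\[
(n+1)\bigl[d_1(u,\max(u,v)) + d_1(\max(u,v),v)\bigr] = \sum_{k=0}^n \int_X |u-v|\,\theta_u^k \wedge \theta_v^{n-k}.
\]
For the right hand side, Theorem \ref{thm: basic I energy}(i) applied twice gives
\[
(n+1) d_1(u,v) = \sum_{k=0}^n \int_X (u-P(u,v))\,\theta_u^k \wedge \theta_{P(u,v)}^{n-k} + \sum_{k=0}^n \int_X (v-P(u,v))\,\theta_v^k \wedge \theta_{P(u,v)}^{n-k}.
\]

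Since $P(u,v) \leq \min(u,v)$, the pointwise inequality $|u-v| \leq (u-P(u,v)) + (v-P(u,v))$ holds, reducing matters to comparing integrals of the form $\int f\,\theta_u^k \wedge \theta_v^{n-k}$ to integrals of $\int f\,\theta_u^k \wedge \theta_{P(u,v)}^{n-k}$ (and symmetrically) for $f \in \{u-P(u,v), v-P(u,v)\}\geq 0$. My plan is to telescope
\[
\theta_v^{n-k} - \theta_{P(u,v)}^{n-k} = \sum_{j=0}^{n-k-1}\theta_v^j \wedge i\partial\bar\partial(v-P(u,v)) \wedge \theta_{P(u,v)}^{n-k-1-j},
\]
then apply the non-pluripolar integration-by-parts of \cite{BEGZ10} to transfer $i\partial\bar\partial$ onto $f = u - P(u,v)$, producing a signed combination of mixed integrals of the form $\int (v-P(u,v))\,\theta_u^a \wedge \theta_v^b \wedge \theta_{P(u,v)}^c$ with $a+b+c = n$. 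Iterating this telescoping in both the $\theta_v$ and $\theta_u$ factors, every resulting integrand reduces to one of the shape appearing on the right hand side above, with a purely combinatorial multiplicity depending on $n$.

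The main obstacle is this last comparison: $\theta_u^k \wedge \theta_v^{n-k}$ and $\theta_u^k \wedge \theta_{P(u,v)}^{n-k}$ are not pointwise comparable, and the intermediate currents $\theta_u - \theta_{P(u,v)}$ and $\theta_v - \theta_{P(u,v)}$ are not positive, so controlling signs in the telescoping requires care with the Stokes-type identities for non-pluripolar products of unbounded potentials. This is precisely why the constant $C = C(n)>1$ cannot be taken equal to $1$: in the K\"ahler case (corresponding to \cite[Remark 5.6]{Dar15}) the telescoping collapses to the Pythagorean identity $I(u)+I(v) = I(\max(u,v)) + I(P(u,v))$, but in the big setting only the one-sided volume bound of Theorem \ref{thm: volume_diamond_ineq_intr} is available, forcing a dimensional constant to absorb the excess.
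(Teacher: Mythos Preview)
Your reduction to an energy comparison is correct: the identities $d_1(u,\max(u,v))=I(\max(u,v))-I(u)$ and $d_1(\max(u,v),v)=I(\max(u,v))-I(v)$ follow from \eqref{def d1} and $P(u,\max(u,v))=u$, and your plurifine-locality computation of
\[
(n+1)\bigl[d_1(u,\max(u,v))+d_1(\max(u,v),v)\bigr]=\sum_{k=0}^n\int_X|u-v|\,\theta_u^k\wedge\theta_v^{n-k}
\]
is valid. The paper's proof of the second inequality, however, is nothing more than a direct citation of \cite[Theorem~3.7 and eq.~(6)]{DDL3}; that reference contains precisely the mixed-energy comparison you are trying to rebuild. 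So you are not taking a different route, you are re-deriving the cited result.

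The gap is that you stop exactly where the substance begins. The step you yourself call ``the main obstacle'' --- bounding $\int_X f\,\theta_u^k\wedge\theta_v^{n-k}$ by a dimensional multiple of the terms $\int_X f\,\theta_u^k\wedge\theta_{P(u,v)}^{n-k}$ and their symmetric counterparts --- is left as a plan (``telescope, integrate by parts, iterate'') rather than a proof. The currents $\theta_u-\theta_{P(u,v)}$ and $\theta_v-\theta_{P(u,v)}$ are not positive, so the signs in your proposed telescoping do not resolve themselves; this is exactly the work carried out in \cite[Theorem~3.7]{DDL3}, and you have not reproduced it.

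Your final paragraph also contains an error. The identity $I(u)+I(v)=I(\max(u,v))+I(P(u,v))$ does \emph{not} hold in the K\"ahler case; the constant $C(n)>1$ already appears in \cite[Remark~5.6]{Dar15} for K\"ahler classes, so your explanation for why $C>1$ is needed is incorrect. The volume diamond inequality (Theorem~\ref{thm: volume_diamond_ineq_intr}) concerns total masses, not Monge--Amp\`ere energies, and plays no role in this proposition.
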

\begin{proof}
The first inequality follows from the triangle inequality. The second inequality follows from \cite[Theorem 3.7 and eq. (6)]{DDL3}.
\end{proof}

\subsection{The chordal  geometry of geodesic rays}\label{sec: chordal geometry}
By $\mathcal R(X,\theta)$ we denote the space of finite energy geodesic rays emanating from $V_\theta$:
$$\mathcal R(X,\theta):= \{[0,\infty) \ni t \to u_t \in \mathcal E^1(X,\theta) \textup{ s.t. } u_0 =V_\theta  \textup{ and } t \to u_t \textup{ is a $d_1$ geodesic ray}\}.$$  
As shorthand convention we will use the notation $\{u_t\}_t \in \mathcal R(X,\theta)$ when referring to rays.

According to the constructions in \cite{DDL3} the above space has plenty of elements. In \cite[Section 4]{DL18} the first and last author carried out a detailed analysis of the space of $L^p$ geodesic rays in the K\"ahler case. For similar flavour results in the non-Archimedean context we refer to \cite{BBJ18} and references therein.

Given that our main focus here is on the space of singularity types, in the present paper we only focus on the basic analysis of the space of $L^1$ rays in the more general case of big cohomology classes.

In case of the Euclidean topology of $\Bbb R^n$, the space of (unit speed) geodesic rays emanating from the origin, is just the collection of half lines emanating from $0$, that can be identified with the unit sphere. Inspired by this simple analogy and the chordal metric structure on the sphere, we introduce the chordal $L^1$  geometry on $\mathcal R(X,\theta)$:
\begin{equation}\label{eq: d1 C def}
d_1^c(\{u_t\}_t,\{v_t\}_t) := \lim_{t \to \infty}\frac{d_1(u_t,v_t)}{t}.
\end{equation}
Note that by Proposition \ref{prop 5.1: adaptation BDL} it follows that $t \to d_1(u_t,v_t)$ is convex, hence the map $t \to d_1(u_t,v_t)/t$ is increasing, implying that the limit in \eqref{eq: d1 C def} is well defined. We also note the following theorem:

\begin{theorem}\label{thm: R_complete} The space $(\mathcal R(X,\theta),d_1^c)$ is a complete metric space.
\end{theorem}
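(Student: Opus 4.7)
The plan is to first verify that $d_1^c$ satisfies the metric axioms on $\mathcal R(X,\theta)$, and then to prove completeness by constructing a candidate limit ray as the pointwise $d_1$-limit of a given Cauchy sequence, employing the endpoint-stability of geodesic segments (Proposition \ref{prop 4.3: adaptation BDL}) to verify that this candidate is itself a $d_1$-geodesic ray. Non-negativity and symmetry of $d_1^c$ are immediate, and the triangle inequality follows by dividing the $d_1$ triangle inequality by $t$ and taking the limit. For non-degeneracy, suppose $d_1^c(\{u_t\}_t,\{v_t\}_t)=0$: by Proposition \ref{prop 5.1: adaptation BDL} the function $t\mapsto d_1(u_t,v_t)$ is convex and vanishes at $t=0$ (since $u_0=v_0=V_\theta$), so its quotient by $t$ is non-decreasing on $(0,\infty)$; having limit zero forces the quotient, and hence $d_1(u_t,v_t)$ itself, to vanish identically, so $u_t=v_t$ for every $t\geq 0$.

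For completeness, let $(\{u^j_t\}_t)_j$ be a $d_1^c$-Cauchy sequence in $\mathcal R(X,\theta)$. The same convexity/quotient argument gives the crucial pointwise control
\[
d_1(u^j_t, u^k_t)\;\leq\; t\cdot d_1^c\bigl(\{u^j_\cdot\},\{u^k_\cdot\}\bigr),\qquad t\geq 0,
\]
so for each fixed $t\geq 0$ the sequence $(u^j_t)_j$ is $d_1$-Cauchy in the complete metric space $(\mathcal E^1(X,\theta),d_1)$ and converges to some $u_t\in \mathcal E^1(X,\theta)$ with $u_0=V_\theta$. To verify that $t\mapsto u_t$ is a geodesic ray, fix $T>0$ and regard $[0,T]\ni t\mapsto u^j_t$ as the $d_1$-geodesic segment from $V_\theta$ to $u^j_T$. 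Since $u^j_T \to u_T$ in $d_1$, Proposition \ref{prop 4.3: adaptation BDL} implies that on $[0,T]$ the curve $t\mapsto u_t$ is precisely the $d_1$-geodesic segment from $V_\theta$ to $u_T$. Consistency across different $T$'s is automatic: if $T<T'$, both $t\mapsto u_t^{(T)}$ and $t\mapsto u_t^{(T')}$ on $[0,T]$ are $d_1$-limits of the same sequence $(u^j_t)_j$, hence coincide.

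It remains to establish uniform speed of the limit ray and the $d_1^c$-convergence. The speed $c_j:=d_1(V_\theta,u^j_t)/t$ of the $j$-th ray equals $d_1^c$ from the constant ray at $V_\theta$ to $\{u^j_\cdot\}$; the $d_1^c$-triangle inequality then yields $|c_j - c_k|\leq d_1^c(\{u^j_\cdot\},\{u^k_\cdot\})$, so $(c_j)$ is Cauchy with limit $c\geq 0$, and $d_1(V_\theta,u_T)=\lim_j d_1(V_\theta,u^j_T)=cT$ confirms the limit ray has constant speed $c$. Finally, letting $k\to\infty$ in the displayed estimate and using the continuity of $d_1$ gives $d_1(u^j_t,u_t)/t \leq \liminf_k d_1^c(\{u^j_\cdot\},\{u^k_\cdot\})$, and then letting $t\to \infty$ yields
\[
d_1^c\bigl(\{u^j_\cdot\},\{u_\cdot\}\bigr)\;\leq\; \liminf_k d_1^c\bigl(\{u^j_\cdot\},\{u^k_\cdot\}\bigr),
\]
which is arbitrarily small for $j$ large by the Cauchy hypothesis. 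The main obstacle I anticipate is ensuring that the pointwise-in-$t$ limits really assemble into a genuine $d_1$-geodesic ray, rather than merely a curve in $\mathcal E^1(X,\theta)$; the endpoint-stability Proposition \ref{prop 4.3: adaptation BDL} is tailor-made for exactly this step.
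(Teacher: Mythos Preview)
Your proof is correct and follows essentially the same route as the paper: both use the convexity of $t\mapsto d_1(u_t,v_t)$ (Proposition \ref{prop 5.1: adaptation BDL}) to obtain the pointwise bound $d_1(u^j_t,u^k_t)\leq t\,d_1^c(\{u^j_\cdot\},\{u^k_\cdot\})$, invoke completeness of $(\mathcal E^1,d_1)$ to produce the limit curve, apply Proposition \ref{prop 4.3: adaptation BDL} to identify it as a geodesic ray, and then pass to the limit in the pointwise bound to obtain $d_1^c$-convergence. Your additional verifications of consistency across different $T$'s and of the constant speed are harmless elaborations the paper leaves implicit.
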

\begin{proof} 

The triangle inequality is inherited from the $d_1$-geometry of $\mathcal E^1(X,\theta)$. To show that $d_1^c$ is non-degenerate, suppose $d_1^c(\{u_t\}_t,\{v_t\}_t)=0$ and fix $t_0 >0$.  By Proposition \ref{prop 5.1: adaptation BDL} we have:
\begin{equation}\label{eq: fixed_time_cov}
\frac{d_1(u_{t_0},v_{t_0})}{t_0} \leq \lim_{t \to \infty} \frac{d_1(u_t,v_t)}{t}=0.
\end{equation}
Since $d_1$ is non-degenerate, we obtain that $u_{t_0}=v_{t_0}$ and since $t_0>0$ was arbitrary, we obtain that the geodesics $t \to u_t$ and $t \to v_t$ are the same.

Now we argue that $(\mathcal R(X,\theta),d_1^c)$ is complete. Let $\{t \to u_t^k \}_k$ be a $d_1^c$-Cauchy sequence. For fixed $t_0 >0$ by \eqref{eq: fixed_time_cov} we deduce that $\{u_{t_0}^k\}_k$ is a $d_1$-Cauchy sequence in $(\mathcal E^1(X,\theta),d_1)$. By the completeness of the latter space there exists $u_{t_0} \in \mathcal E^1(X,\theta)$ such that $d_1(u_{t_0}^k,u_{t_0}) \to 0$ as $k\rightarrow +\infty$. As a result we obtain a ``limit curve" $[0,\infty) \ni t \to u_t \in \mathcal E^1(X,\theta)$. By Proposition \ref{prop 4.3: adaptation BDL} we obtain that $t \to u_t$ is in fact a finite energy geodesic ray emanating from $u_0 =V_\theta$. 

To finish the proof, we have to argue that $d_1^c(\{u_t^k\},\{u_t\}) \to 0$. Fix $\varepsilon >0$. By \eqref{eq: fixed_time_cov} there exists $j_\varepsilon >0$ such that $d_1(\{u^k_t\},\{u^{l}_t\}) < \varepsilon t$ for all $t >0$ and $k,l > j_\varepsilon$.  Letting $k \to \infty$, we obtain that $d_1(\{u_t\},\{u^{l}_t\}) < \varepsilon t$, hence $d_1^c(\{u_t\}_t,\{u^l_t\}_t) < \varepsilon$ for all $l > j_\varepsilon$, finishing the proof. 
\end{proof}

\paragraph{The radial Monge-Amp\`ere energy of $\mathcal R(X,\theta)$.} For $\{u_t\}_t \in \mathcal R(X,\theta)$ it is natural to introduce the radial Monge--Amp\`ere energy $I\{\cdot\} : \mathcal R(X,\theta) \to \Bbb R$ by the formula
$I\{u_t\} = \lim_t \frac{I(u_t)}{t} = I(u_1)$. By the $d_1$-Lipschitz property of $I$ on $\mathcal E^1(X,\theta)$ (Proposition \ref{lemma 4.15: adaptation Dar15}) and the fact that the map $t \to d_1(u_t,v_t)/t$ is increasing it follows that:
\begin{equation}\label{eq: I_R_cont}
\big|I\{u_t\} - I\{v_t\}\big| \leq d^c_1(\{u_t\}_t,\{v_t\}_t), \ \ \{u_t\}_t,\{v_t\}_t \in \mathcal R(X,\theta).  
\end{equation}

\paragraph{The metric decomposition inequality of $\mathcal R(X,\theta)$.} 

The analog of the Pythagorean formula holds for the space $(\mathcal R(X,\theta),d_1^c)$ (see \cite[Example 3.2]{Xia19} for the argument in the K\"ahler case that translates easily to our context as well). However this formula  does not descend to $\mathcal S(X,\theta)$. Out of this reason, we derive the radial analog of Proposition \ref{remark 5.6: adaptation Dar15} instead. Though perhaps not as ``flashy" as the Pythagorean formula, this ``decomposition inequality" has a number of similar consequences, and it also descends to $\mathcal S(X,\theta)$ as well (see Section \ref{sect: metric geo} below).

First we need to define what we understand under the maximum of two geodesic rays $\{u_t\}_t,\{v_t\}_t \in \mathcal R(X,\theta)$. This is simply the smallest ray $\{h_t\}_t \in \mathcal R(X,\theta)$ that lies above $\{u_t\}_t,\{v_t\}_t$. It is elementary to see that such a ray does exist. Indeed, $h_t = {\rm usc}(\lim_{l \to \infty} w^l_t)$, where $[0,l] \ni t \to w^l_t \in \mathcal E^1(X,\theta)$ is the finite energy geodesic segment joining $w^l_0 = V_\theta$ and $w^l_l = \max(u_l,v_l)$. Since $\{\max(u_t,v_t)\}_t$ is a subgeodesic ray, by the comparison principle \cite[Proposition 3.3]{DDL1} it can be seen that each sequence $\{w^l_t \}_l$ is increasing, proving that $\{h_t\}_t$ is indeed a geodesic ray. By construction this ray has to be the smallest ray lying above $\{u_t\}_t,\{v_t\}_t$, hence it makes sense to introduce the notation:
$$\textup{max}_\mathcal R(u_t,v_t) := h_t.$$
Next we show that the rays $u_t,v_t,\textup{max}_\mathcal R(u_t,v_t)$ satisfy a ``metric decomposition inequality":
\begin{prop} \label{prop: poor_Pythagorean_R}There exists $C>1$ such that, for all $\{u_t\}_t,\{v_t\}_t \in \mathcal R(X,\theta)$,
\begin{equation}
 d_1^c(\{u_t\}_t,\{v_t\}_t) \leq d_1^c(\{u_t\}_t,\{\textup{max}_\mathcal R(u_t,v_t)\}_t) + d_1^c(\{\textup{max}_\mathcal R(u_t,v_t)\}_t,\{v_t\}_t)\leq C  d_1^c(\{u_t\}_t,\{v_t\}_t).
\end{equation}
\end{prop}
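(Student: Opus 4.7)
The plan is to reduce the right-hand inequality to a time-$t$ pointwise version and take a properly normalized limit; the left inequality is just the triangle inequality for $d_1^c$, itself inherited from $d_1$. Applying Proposition \ref{remark 5.6: adaptation Dar15} to the endpoints $u_t, v_t \in \mathcal{E}^1$ for each fixed $t > 0$ yields
$$d_1(u_t, \max(u_t, v_t)) + d_1(\max(u_t, v_t), v_t) \leq C\, d_1(u_t, v_t),$$
with $C = C(n) > 1$. Dividing by $t$ and sending $t \to \infty$ will produce the desired bound, provided that each left-hand term, after normalization, converges to the corresponding chordal distance between $\{u_t\}_t$ (resp.\ $\{v_t\}_t$) and $\{h_t\}_t := \{\textup{max}_{\mathcal R}(u_t, v_t)\}_t$.

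To carry out this identification I would use the ordering $u_t \leq \max(u_t, v_t) \leq h_t$ (with the symmetric inequalities for $v_t$) and the definition of the rooftop envelope to observe that $P_\theta(u_t, \max(u_t, v_t)) = u_t$ and $P_\theta(u_t, h_t) = u_t$. The formula $d_1(f,g) = I(f) + I(g) - 2I(P_\theta(f,g))$ then collapses to
$$d_1(u_t, \max(u_t, v_t)) = I(\max(u_t, v_t)) - I(u_t), \qquad d_1(u_t, h_t) = I(h_t) - I(u_t),$$
and symmetrically for the comparisons with $v_t$. Since $\{u_t\}_t$ and $\{h_t\}_t$ are $d_1$ geodesic rays emanating from $V_\theta$, and $I$ is affine along finite energy geodesic segments with $I(V_\theta) = 0$, one gets $I(u_t) = t\, I(u_1)$ and $I(h_t) = t\, I(h_1)$; in particular $d_1(u_t, h_t)/t = I(h_1) - I(u_1)$ is constant in $t$ and coincides with $d_1^c(\{u_t\}_t, \{h_t\}_t)$.

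The remaining and main technical step is to show that
$$\lim_{t \to \infty} \frac{I(\max(u_t, v_t))}{t} = I(h_1),$$
so that the normalized terms $d_1(u_t, \max(u_t, v_t))/t$ and $d_1(\max(u_t, v_t), v_t)/t$ respectively converge to $d_1^c(\{u_t\}_t,\{h_t\}_t)$ and $d_1^c(\{h_t\}_t,\{v_t\}_t)$. Here I would return to the construction $h_s = \textup{usc}(\lim_{l \to \infty} w^l_s)$, where $w^l$ is the finite energy geodesic segment joining $V_\theta$ to $\max(u_l, v_l)$: linearity of $I$ along $w^l$ gives $I(w^l_s) = (s/l)\, I(\max(u_l, v_l))$ for every $s \in [0,l]$, and since $w^l_s \nearrow h_s$ monotonically, the continuity of $I$ along a.e.\ increasing sequences in $\mathcal{E}^1$ forces $I(w^l_s) \to I(h_s) = s\, I(h_1)$. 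Dividing by $s$ yields the claimed limit, and plugging back into the normalized pointwise inequality delivers the right-hand bound with the same constant $C = C(n)$. I expect the monotone continuity of $I$ in the big cohomology setting to be the step requiring the most care, but it is inherited from the convergence and monotonicity results for non-pluripolar products collected in Section \ref{sect: preliminaries}.
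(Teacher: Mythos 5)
Your proposal is correct and follows essentially the same route as the paper: the triangle inequality for the left bound, then the reduction of $d_1$-distances between nested potentials to differences of $I$ via $P(u_t,\max(u_t,v_t))=u_t$, the slope identity $I(h_1)=\lim_l I(\max(u_l,v_l))/l$ obtained from linearity of $I$ along the approximating segments $w^l$, and finally Proposition~\ref{remark 5.6: adaptation Dar15}. The only cosmetic difference is that you normalize the pointwise inequality by $t$ and pass to the limit, whereas the paper computes each chordal distance $d_1^c(\{u_t\},\{h_t\})$ separately and then symmetrizes; both hinge on the same convergence $\lim_l I(w^l_1)=I(h_1)$.
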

\begin{proof} The first estimate follows from the triangle inequality. 
Since $\textup{max}_\mathcal R(u_t,v_t) \geq u_t$, from \eqref{def d1} we have that 
\begin{equation}\label{eq: d_1_est_begin}
d_1^c(\{u_t\}_t,\{\textup{max}_\mathcal R(u_t,v_t)\})=\lim_{t\to \infty }\frac{I(\textup{max}_\mathcal R(u_t,v_t)) - I(u_t)}{t}=I(\textup{max}_\mathcal R(u_1,v_1)) - I(u_1),
\end{equation}
where the last identity follows from the linearity of $I$ along geodesic rays \cite[Theorem 3.12]{DDL1}.

By construction of the ray $t \to \textup{max}_\mathcal R(u_t,v_t)$ it follows that
$$I(\textup{max}_{\mathcal R}(u_1,v_1)) = \lim_{l \to \infty} \frac{I(\max(u_l,v_l))}{l},$$
and this last limit exists as $l \to I(\max(u_l,v_l))$ is convex thanks to \cite[Theorem 3.8]{DDL1}. Consequently, we can build on \eqref{eq: d_1_est_begin} in the following manner:
\begin{flalign*}d_1^c(\{u_t\}_t,\{\textup{max}_\mathcal R(u_t,v_t)\}_t)&=\lim_{l \to \infty} \frac{I(\max(u_l,v_l))}{l} - I(u_1) = \lim_{l \to \infty} \frac{I(\max(u_l,v_l)) - I(u_l)}{l} \\
&= \lim_{l \to \infty} \frac{d_1(\max(u_l,v_l),u_l)}{l} \leq C\lim_{l \to \infty} \frac{d_1(u_l,v_l)}{l} = C d_1^c(\{u_t\}_t,\{v_t\}_t),
\end{flalign*}
where in the last inequality we have used Proposition \ref{remark 5.6: adaptation Dar15}. Using symmetry of $\{u_t\}_t,\{v_t\}_t$, the proof is finished.
\end{proof}

\section{The metric geometry of singularity types}\label{sect: metric geo}

The aim of this section is to show that $\mathcal S(X,\theta)$ embeds naturally in $\mathcal R(X,\theta)$, endowing the former space with a natural pseudo-metric structure.

Given $\psi \in \textup{PSH}(X,\theta)$ with $\psi \leq 0$, generalizing the methods of \cite[Section 4]{Dar13}, it is possible to define a geodesic ray $\{r[\psi]_t\}_t \in \mathcal R(X,\theta)$ whose potentials have minimal singularities. The specific construction is as follows.  Let $[0,l] \ni t \to r(\psi)^l_t \in \mathcal E^1(X,\omega)$ be the geodesic segment with minimal singularity type joining $r(\psi)^l_0=V_\theta$ and $r(\psi)^l_l = \max(\psi,V_\theta-l)$. Using the comparison principle (\cite[Proposition 3.2]{DDL1}) numerous times, it can be shown that for any fixed $t > 0$ the family $\{r(\psi)^l_t \}_{l \geq 0}$ is increasing as $l \to \infty$, and its limit equals the geodesic ray with minimal singularity type $t \to r[\psi]_t$. Along the way we also obtain the lower bound $\max(\psi,V_\theta-t) \leq r[\psi]_t$ for all $t \in [0,\infty)$.

Since $\psi \leq \psi'$ implies that $r[\psi]_t \leq r[\psi']_t$ and $r[\psi]_t = r[\psi+C]_t$, $C\in \mathbb{R}$,  we obtain that the construction of the ray only depends on the singularity type, giving us a map:
\begin{equation}\label{eq: S_to_R_map}
r[\cdot]:\mathcal S(X,\theta) \to \mathcal R(X,\theta).
\end{equation}
The basic idea will be to pull back the metric geometry of $\mathcal R(X,\theta)$ recalled in the previous section to $\mathcal S(X,\theta)$ via this map. Before we do this we carry out some preliminary analysis.
Since $[0,\infty) \ni t \to \max(\psi,V_\theta-t)$ is a subgeodesic ray with minimal singularities we have that $t \to I(\max(\psi, V_\theta-t))$ is convex by \cite[Theorem 3.8]{DDL1} and non-increasing by Theorem \ref{thm: basic I energy}. 

Via our embedding in \eqref{eq: S_to_R_map}, we can introduce the Monge-Amp\`ere energy of singularity types $$I_\mathcal S[\psi] := I\{r[\psi]_t\}.$$ 
\begin{theorem}  For $\psi \in \textup{PSH}(X,\theta)$ we have
\begin{equation}\label{eq: slope formula}
I_\mathcal S[\psi]= -\int_{X} \theta_{V_\theta}^n+  \frac{1}{n+1}\sum_{j=0}^n  \int_X \theta_{V_\theta}^j \wedge \theta_{\psi}^{n-j}. 
\end{equation}
\end{theorem}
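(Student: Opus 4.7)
The plan is to compute $I_{\mathcal S}[\psi] = I(r[\psi]_1)$ by relating it to the energies of the approximating segment endpoints $\psi_l := \max(\psi, V_\theta - l)$, and then evaluating the resulting limit by hand via plurifine locality.

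Since $I$ is affine along finite energy geodesic segments and $I(V_\theta) = 0$, the recalled construction of $r(\psi)^l_t$ yields $I(r(\psi)^l_1) = I(\psi_l)/l$ for every $l \geq 1$. The same comparison-principle bounds used in the construction force $V_\theta - 1 \leq r(\psi)^l_1 \leq V_\theta$ uniformly in $l$. Combined with $r(\psi)^l_1 \nearrow r[\psi]_1$ and standard continuity of $I$ along uniformly bounded monotone increasing sequences in $\mathcal{E}^1(X,\theta)$ (obtained by applying Theorem \ref{thm: lsc of non pluripolar product} to each summand in the formula for $I$), together with linearity of $I$ along the limiting ray, this gives
\begin{equation*}
I_{\mathcal S}[\psi] = I(r[\psi]_1) = \lim_{l \to \infty}\frac{I(\psi_l)}{l}.
\end{equation*}

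The remainder is an explicit computation of this limit, term by term in the defining sum for $I$. Since $\psi_l$ has minimal singularities, integration by parts against the bounded function $\psi_l - V_\theta$ yields $\int_X \theta_{\psi_l}^k \wedge \theta_{V_\theta}^{n-k} = \int_X \theta_{V_\theta}^n$ for every $k \in \{0,\ldots,n\}$. Adding and subtracting $l$ inside the integrand gives
\begin{equation*}
\int_X (\psi_l - V_\theta)\,\theta_{\psi_l}^k \wedge \theta_{V_\theta}^{n-k} = \int_X (\psi_l - V_\theta + l)\,\theta_{\psi_l}^k \wedge \theta_{V_\theta}^{n-k} - l\int_X \theta_{V_\theta}^n.
\end{equation*}
The non-negative function $\psi_l - V_\theta + l = \max(\psi - V_\theta + l, 0)$ vanishes on $\{\psi \leq V_\theta - l\}$ and equals $\psi - V_\theta + l \in (0, l]$ on the plurifine open set $\{\psi > V_\theta - l\}$, on which moreover $\psi_l = \psi$. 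Plurifine locality (Lemma \ref{lem: plurifine}) therefore rewrites the first integral on the right as $\int_{\{\psi > V_\theta - l\}} (\psi - V_\theta + l)\,\theta_\psi^k \wedge \theta_{V_\theta}^{n-k}$.

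After dividing by $l$, the integrand $\tfrac{1}{l}(\psi - V_\theta + l)\,\mathbbm{1}_{\{\psi > V_\theta - l\}}$ takes values in $[0,1]$ and converges pointwise to $1$ off the pluripolar set $\{\psi = -\infty\}$. Since the non-pluripolar product $\theta_\psi^k \wedge \theta_{V_\theta}^{n-k}$ does not charge pluripolar sets, dominated convergence yields
\begin{equation*}
\lim_{l \to \infty}\frac{1}{l}\int_X (\psi_l - V_\theta)\,\theta_{\psi_l}^k \wedge \theta_{V_\theta}^{n-k} = -\int_X \theta_{V_\theta}^n + \int_X \theta_\psi^k \wedge \theta_{V_\theta}^{n-k}.
\end{equation*}
Summing over $k$ with the factor $\tfrac{1}{n+1}$ and relabeling $j = n - k$ produces the stated identity. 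The main obstacle is the continuity of $I$ along the monotone limit $r(\psi)^l_1 \nearrow r[\psi]_1$; every other step is either an affine-ness property of $I$ on segments and rays or a direct bounded convergence argument combined with plurifine locality.
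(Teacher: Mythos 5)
Your argument is correct and proceeds along essentially the same lines as the paper's proof: reduce $I_{\mathcal S}[\psi]$ to $\lim_{l\to\infty} I(\psi_l)/l$ with $\psi_l = \max(\psi, V_\theta - l)$, then evaluate that limit term by term via plurifine locality and bounded convergence. The one genuine (though small) difference is in the reduction step: the paper invokes \cite[Lemma 3.15]{DDL1} as a black box to pass from $\lim_t I(r[\psi]_t)/t$ to $\lim_t I(\psi_t)/t$, whereas you re-derive this directly — evaluating at $t=1$ using affineness of $I$ along the approximating segments (so $I(r(\psi)^l_1) = I(\psi_l)/l$), the uniform sandwich $V_\theta - 1 \leq r(\psi)^l_1 \leq V_\theta$, monotone continuity of $I$ on bounded increasing sequences of minimal-singularity potentials, and linearity of $I$ along the limit ray. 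Your route is a touch more self-contained; the paper's is shorter. The explicit computation is also restructured slightly — you add and subtract $l$ inside the integrand and use $\int_X \theta_{\psi_l}^k \wedge \theta_{V_\theta}^{n-k} = \int_X \theta_{V_\theta}^n$ in one step, while the paper splits the integral over $\{\psi > V_\theta - t\}$ and its complement — but these are algebraically equivalent and both finish by applying plurifine locality to replace $\psi_l$ with $\psi$ on $\{\psi > V_\theta - l\}$ and then letting the $[0,1]$-valued integrand pass to its pointwise limit against the non-pluripolar measure.
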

\noindent The proof of the above theorem is analogous to  \cite[Theorem 2.5]{Dar13} that deals with the case when $\theta$ is K\"ahler. Nevertheless, given its central role in this work, we are going to give the details for the reader's convenience.
\begin{proof}

We can  assume w.l.o.g. that $\psi\leq V_\theta\leq  0$. Setting $\psi_t := \max(\psi,V_{\theta}-t)$, by \cite[Lemma 3.15]{DDL1} we have that
$$I_{\mathcal{S}}[\psi] =  \lim_{t\to +\infty} \frac{I(r[\psi]_t)}{t}= \lim_{t\to +\infty} \frac{I(\psi_t)}{t} =  \frac{1}{n+1}\sum_{k=0}^n \lim_{t\rightarrow +\infty}\int_X \frac{\psi_t-V_\theta}{t}\, \theta_{\psi_t}^k \wedge \theta_{V_\theta}^{n-k}.$$

Note that $\int_X \theta_{\psi_t}^n = \int_X \theta_{V_{\theta}}^n$ since $\psi_t$ has minimal singularity. By Lemma \ref{lem: plurifine},  we have 
\begin{flalign*}
\int_X \frac{\psi_t - V_\theta}{t} \theta_{\psi_t}&= \int_{\{\psi>V_\theta-t\}}\frac{\psi_t - V_\theta}{t} \theta_{\psi_t}^n -\int_{\{\psi\leq V_\theta-t\}} \theta_{\psi_t}^n\\
& = \int_{\{\psi>V_\theta-t\}}\frac{\psi - V_\theta}{t} \theta_{\psi}^n -\int_X \theta_{\psi_t}^n + \int_{\{\psi > V_\theta-t\}} \theta_{\psi_t}^n\\
& = \int_{\{\psi>V_\theta-t\}}\frac{\psi - V_\theta}{t} \theta_{\psi}^n -\int_X \theta_{V_{\theta}}^n + \int_{\{\psi > V_\theta-t\}} \theta_{\psi}^n. 
\end{flalign*}
The functions $\mathbbm{1}_{\{\psi>V_{\theta} -t\}} \frac{\psi - V_\theta}{t}$ are uniformly bounded in $[-1,0]$ and they converge pointwise to $0$ outside a pluripolar set on which the measure $\theta_{\psi}^n$ vanishes. Hence
$$
\lim_{t\rightarrow +\infty}\int_X \frac{\psi_t - V_\theta}{t} \,\theta_{\psi_t}^n=  - \int_X \theta_{V_{\theta}}^n + \int_X \theta_{\psi}^n.
$$
Since for any $j=1, \dots, n$, $\int_X \theta_{\psi_t}^{n-j} \wedge \theta_{V_{\theta}}^j= \int_X \theta_{V_{\theta}}^n$, the exact same arguments give
$$
\lim_{t\rightarrow +\infty}\int_X \frac{\psi_t - V_\theta}{t} \, \theta_{\psi_t}^{n-j}\wedge \theta_{V_\theta}^j= - \int_X \theta_{V_{\theta}}^n  + \int_X \theta_{\psi}^{n-j} \wedge \theta_{V_{\theta}}^j, \quad j=1, \dots, n . 
$$
This gives the conclusion.
\end{proof}

Finally, we list and prove the properties of the map $r[\cdot]$ that will be most important to us, finding a link with the ceiling operator $\mathcal C$ in the process:
\begin{prop}\label{prop: ray_ceiling_prop} Suppose $[\psi],[\chi] \in \mathcal S(X,\theta)$ such that $\psi,\chi \leq 0$. Then the following hold:\\
\noindent (i) $r[\psi]_\infty:=\lim_{t \to \infty} r[\psi]_t = \mathcal{C}(\psi)$.\\
\noindent (ii) $r[\psi]_t = r[\mathcal C(\psi)]_t$. In particular, $r[\mathcal C(\psi)]_\infty = \mathcal C[\psi]$.\\
\noindent (iii) $r[\psi]_t=r[\chi]_t$ if and only if $\mathcal C(\psi)=\mathcal C(\chi)$.\\
\noindent (iv) $P{[\mathcal C(\psi)]} = \mathcal C(\psi)$.
\end{prop}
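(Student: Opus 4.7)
I would address the four assertions in the order $(iv), (ii), (i), (iii)$. Crucially, $(ii)$ can be established \emph{without} invoking $(i)$, which then lets me reduce $(i)$ to the model case.

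For $(iv)$: this is essentially a corollary of the preliminaries. By \cite[Remark 2.5]{DDL2}, $P[u] \in \mathcal F_u$ for every $u \in \psh(X,\theta)$. Applied to $u = \mathcal C(\psi)$ and combined with Proposition~\ref{prop: C_operator_properties}$(iii)$, this gives $P[\mathcal C(\psi)] \leq \sup \mathcal F_{\mathcal C(\psi)} = \mathcal C(\mathcal C(\psi)) = \mathcal C(\psi)$. The reverse inequality is immediate since $\mathcal C(\psi) \leq 0$ is a candidate in the defining family of $P[\mathcal C(\psi)]$.

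For $(ii)$: since $\psi \leq \mathcal C(\psi)$, monotonicity of the ray construction yields $r[\psi]_t \leq r[\mathcal C(\psi)]_t$ for every $t$. Because $\mathcal C(\psi) \in \mathcal F_\psi$, the mixed masses against $V_\theta$ agree, so the slope formula \eqref{eq: slope formula} gives $I_\mathcal S[\psi] = I_\mathcal S[\mathcal C(\psi)]$. The $d_1$-linearity of $I$ along finite-energy geodesic rays then forces $I(r[\psi]_t) = t\, I_\mathcal S[\psi] = t\, I_\mathcal S[\mathcal C(\psi)] = I(r[\mathcal C(\psi)]_t)$. Applying Lemma~\ref{lem: energy equal} with $u = r[\mathcal C(\psi)]_t \geq v = r[\psi]_t$ and $I(u) = I(v)$ yields $r[\psi]_t = r[\mathcal C(\psi)]_t$.

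For $(i)$: by $(ii)$ we may assume $\psi = \mathcal C(\psi)$ is a model potential, and the goal becomes $r[\psi]_\infty = \psi$. Since $\psi \leq r[\psi]_t \leq V_\theta$ for every $t \geq 0$ and $t \mapsto r[\psi]_t$ is pointwise non-increasing (from the subgeodesic convexity together with the boundary normalizations), the limit $r[\psi]_\infty$ exists as a $\theta$-psh function with $\psi \leq r[\psi]_\infty \leq V_\theta$. The lower bound $r[\psi]_\infty \geq \psi$ is immediate from $r[\psi]_t \geq \max(\psi, V_\theta-t)$. For the reverse I would show $r[\psi]_\infty \in \mathcal F_\psi$; since $\psi = \mathcal C(\psi) = \sup \mathcal F_\psi$, this forces $r[\psi]_\infty \leq \psi$. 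All defining conditions of $\mathcal F_\psi$ are immediate except the mixed mass identity
\[
\int_X \theta_{V_\theta}^j \wedge \theta_{r[\psi]_\infty}^{n-j} = \int_X \theta_{V_\theta}^j \wedge \theta_\psi^{n-j}, \qquad j = 0, \ldots, n.
\]
The ``$\geq$'' direction follows from $r[\psi]_\infty \geq \psi$ via the mass monotonicity of \cite[Theorem 1.1]{DDL2}. For ``$\leq$'' one combines the $d_1$-linearity $I(r[\psi]_t) = t\, I_\mathcal S[\psi]$ with the integral expansion of $I$ in Theorem~\ref{thm: basic I energy}$(i)$ and the lower semicontinuity provided by Theorem~\ref{thm: lsc of non pluripolar product} applied to the monotone limit $r[\psi]_t \searrow r[\psi]_\infty$; the slope formula \eqref{eq: slope formula} is the crucial device that converts the asymptotic energy identity back into a sharp upper bound on the mixed masses of $r[\psi]_\infty$.

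For $(iii)$: ``$\Leftarrow$'' follows immediately from $(ii)$: $r[\psi]_t = r[\mathcal C(\psi)]_t = r[\mathcal C(\chi)]_t = r[\chi]_t$. ``$\Rightarrow$'' follows by letting $t \to \infty$ in $r[\psi]_t = r[\chi]_t$ and invoking $(i)$ to get $\mathcal C(\psi) = r[\psi]_\infty = r[\chi]_\infty = \mathcal C(\chi)$.

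\paragraph{Main obstacle.} The principal technical challenge lies in the ``$\leq$'' direction of the mixed mass identity in $(i)$. While monotonicity of the non-pluripolar product hands over ``$\geq$'' for free, the reverse bound requires converting the global asymptotic identity $I(r[\psi]_t)/t = I_\mathcal S[\psi]$---information encoding only the mixed masses of $\psi$---into pointwise control of the mixed masses of $r[\psi]_\infty$. This step is the most delicate interplay between $d_1$-linearity of the Monge--Amp\`ere energy, the integral formula of Theorem~\ref{thm: basic I energy}, and the semicontinuity results of Theorem~\ref{thm: lsc of non pluripolar product}.
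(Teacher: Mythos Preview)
Your arguments for $(iv)$, $(ii)$, and $(iii)$ are correct. In fact your route to $(iv)$ is cleaner than the paper's: the paper deduces $(iv)$ from $(i)$ via \cite[Lemma~3.17]{DDL1}, whereas you obtain it directly from Proposition~\ref{prop: C_operator_properties}$(iii)$ together with the inclusion $P[u]\in\mathcal F_u$. Your proof of $(ii)$ is essentially the paper's ``first claim'' specialized to $v=\mathcal C(\psi)\in\mathcal F_\psi$.

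The gap lies in $(i)$, specifically in the ``$\leq$'' direction of the mixed mass identity. Your proposed combination of the linearity $I(r[\psi]_t)=t\,I_\mathcal S[\psi]$, the integral expansion of $I$, and the lower semicontinuity of Theorem~\ref{thm: lsc of non pluripolar product} does not deliver the bound you need. Applying Theorem~\ref{thm: lsc of non pluripolar product} to the decreasing limit $r[\psi]_t\searrow r[\psi]_\infty$ only produces
\[
\int_X\theta_{r[\psi]_\infty}^{n-j}\wedge\theta_{V_\theta}^j\;\leq\;\liminf_{t\to\infty}\int_X\theta_{r[\psi]_t}^{n-j}\wedge\theta_{V_\theta}^j\;=\;\int_X\theta_{V_\theta}^n,
\]
which is vacuous. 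If instead you try to invoke the slope formula \emph{for} $r[\psi]_\infty$, you obtain $I_\mathcal S[r[\psi]_\infty]$, which is the slope of the \emph{different} ray $r[r[\psi]_\infty]$; from $r[\psi]_\infty\geq\psi$ and monotonicity you only get $I_\mathcal S[r[\psi]_\infty]\geq I_\mathcal S[\psi]$, the wrong direction. There is no mechanism in your sketch that forces the reverse inequality.

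What is missing is precisely the paper's key step: the identification $r[r[\psi]_\infty]=r[\psi]$. One inequality is monotonicity (from $r[\psi]_\infty\geq\psi$). For the other, since $r[\psi]_t\geq r[\psi]_\infty$ and $r[\psi]_t\geq V_\theta-t$ one has $r[\psi]_t\geq\max(r[\psi]_\infty,V_\theta-t)$; the comparison principle (\cite[Proposition~3.2]{DDL1}), applied between the geodesic $r[\psi]$ on $[0,l]$ and the approximating segments defining $r[r[\psi]_\infty]$, then yields $r[\psi]_t\geq r[r[\psi]_\infty]_t$. Once this equality of rays is in hand, the slope formula gives $I_\mathcal S[r[\psi]_\infty]=I_\mathcal S[\psi]$, and combined with your termwise ``$\geq$'' from \cite[Theorem~1.1]{DDL2} the mixed mass identity follows.
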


In particular, part (i) of this proposition proves that the image of the ceiling operator $\mathcal C$ is exactly the collection of $\theta$-psh functions that can arise as time limits of geodesic rays of the type $\{r[\psi]_t\}_t$.

\begin{proof} 
First, we claim that given $u,v\in \textup{PSH}(X, \theta)$ such that $v \in \mathcal{F}_u$ then  $r[u]= r[v]$, and in particular $r[u]_\infty= r[v]_\infty$. Indeed, by \eqref{eq: slope formula} we know that $\lim_{t\rightarrow +\infty} \frac{I(r[u]_t)}{t}= \lim_{t\rightarrow +\infty} \frac{I(r[v]_t)}{t}$. The linearity of the energy $I$ \cite[Theorem 3.12]{DDL1} then insures that $I(r[v]_\ell)= I(r[u]_\ell)$ for any $\ell\geq 0$. Since $r[u]_\ell\leq r[v]_\ell $ it follows from Lemma \ref{lem: energy equal} that $r[u]_\ell= r[v]_\ell$, $\forall \ell \geq 0$.

Now, set $u:= r[\psi]_\infty$ and we claim that  $r[u]_t= r[\psi]_t$.
Since $u \geq \psi$, we get that $r[u]_t \geq r[\psi]_t$. For the other direction, we note that $r[\psi]_t\geq u$ and $r[\psi]_t\geq V_\theta-t$, thus $r[\psi]_t\geq \max(u, V_\theta-t)$. The inequality $r[u]_t \leq  r[\psi]_t$ then follows by the construction of $\{r[u]_t\}_t$ together with the comparison principle. 

As a consequence of this second claim we have that $I(r[u]_t)= I(r[\psi]_t)$. From \eqref{eq: slope formula} and \cite[Proposition 3.1]{DDL2} we get
$$\int_X \theta_u^k\wedge \theta_{V_\theta}^{n-k}= \int_X \theta_\psi^k\wedge \theta_{V_\theta}^{n-k}, \quad \forall k=0,\cdots, n.$$
This means that $u\leq \mathcal{C}(\psi)$. Moreover, our claims give
$r[\psi]_t=r[u]_t=r[\mathcal{C}(\psi)]_t$ for any $t$, hence $u=r[u]_\infty =r[\mathcal{C}(\psi)]_\infty \geq \mathcal{C}(\psi)$. Thus $u= \mathcal{C}(\psi)$ addressing $(i)$. The statement in $(ii)$ is just a consequence of the second claim above.

 If $r[\psi]=r[\chi]$, then by $(i)$ we have that $\mathcal C(\psi)=r[\psi]_\infty=r[\chi]_\infty=\mathcal C(\chi)$. Conversely if $\mathcal C(\psi)=\mathcal C(\chi)$, then by $(ii)$ we have $r[\psi]=r[\chi].$ This proves $(iii)$.

Lastly, since $\inf_{t \in [0,\infty)} r[\psi]_t =r[\psi]_\infty=\mathcal C(\psi)$, by \cite[ Lemma 3.17]{DDL1} we have that $P{[\mathcal C(\psi)]}=\mathcal C(\psi)$, establishing $(iv)$.
\end{proof}

Finally, as previously advertised, we consider the $L^1$ (pseudo)metric structure of $\mathcal S(X,\theta)$, by pulling back the chordal metric structure from $\mathcal R(X,\theta)$:
$$d_\mathcal S([\psi],[\chi]):= d^c_1(\{r[\psi]_t\}_t,\{r[\chi]_t\}_t).$$
Our main result about $(\mathcal S(X,\theta),d_\mathcal S)$ in this subsection characterizes the singularity types that are at zero distance apart with respect to the $d_{\mathcal S}$ pseudo-metric:
\begin{theorem}\label{criteria d_1 vanishing}
$(\mathcal S(X,\theta),d_\mathcal S)$ is a pseudo-metric space. More precisely, the following are equivalent:\\
\noindent (i) $d_\mathcal S([\psi],[\chi])=0$. \\
\noindent (ii) $r [\psi]=r[\chi]$.\\
\noindent (iii) $\mathcal C(\psi)=\mathcal C(\chi)$. 
\end{theorem}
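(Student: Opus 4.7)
The proof is essentially a consolidation of results already established in the excerpt, so the proposal is short.

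First I would observe that $d_\mathcal S$ automatically inherits symmetry and the triangle inequality from $d_1^c$, since it is defined as the pullback of $d_1^c$ under the map $r[\,\cdot\,]:\mathcal S(X,\theta)\to\mathcal R(X,\theta)$ from \eqref{eq: S_to_R_map}. Non-negativity is immediate as well. So the only thing preventing $d_\mathcal S$ from being a genuine metric is possible degeneracy, and the three equivalent conditions are exactly a characterization of when this degeneracy occurs. This also explains why the theorem is phrased as an equivalence rather than proving $d_\mathcal S$ is a metric outright.

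For (i)$\Leftrightarrow$(ii): by definition $d_\mathcal S([\psi],[\chi]) = d_1^c(\{r[\psi]_t\}_t, \{r[\chi]_t\}_t)$, and Theorem \ref{thm: R_complete} establishes that $d_1^c$ is a (non-degenerate) metric on $\mathcal R(X,\theta)$. Therefore the vanishing of $d_\mathcal S([\psi],[\chi])$ is equivalent to the equality of the two rays $\{r[\psi]_t\}_t = \{r[\chi]_t\}_t$ as elements of $\mathcal R(X,\theta)$.

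For (ii)$\Leftrightarrow$(iii): this is precisely the content of Proposition \ref{prop: ray_ceiling_prop}(iii), which was proved using the fact from part (i) of that proposition that $r[\psi]_\infty = \mathcal C(\psi)$, together with the fact from part (ii) that $r[\mathcal C(\psi)]_t = r[\psi]_t$ for all $t$. In one direction, if $r[\psi] = r[\chi]$, take $t\to\infty$ and apply (i) of Proposition \ref{prop: ray_ceiling_prop} to each side; in the other direction, if $\mathcal C(\psi)=\mathcal C(\chi)$, then by part (ii) of that proposition both rays equal $r[\mathcal C(\psi)]$.

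There is no real obstacle here since all the hard work has been done: the degeneracy analysis of $r[\,\cdot\,]$ in Proposition \ref{prop: ray_ceiling_prop} and the metric property of $(\mathcal R(X,\theta),d_1^c)$ in Theorem \ref{thm: R_complete}. If anything, the conceptually nontrivial step already carried out earlier is the identification $r[\psi]_\infty = \mathcal C(\psi)$, which uses the linearity of $I$ along finite energy geodesic rays together with Lemma \ref{lem: energy equal} to conclude that two monotonically related rays with equal slope must coincide.
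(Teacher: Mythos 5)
Your proof is correct and follows exactly the same route as the paper: the pseudo-metric properties are inherited from $d_1^c$ via the pullback definition, (i)$\Leftrightarrow$(ii) is the non-degeneracy of $d_1^c$ from Theorem \ref{thm: R_complete}, and (ii)$\Leftrightarrow$(iii) is Proposition \ref{prop: ray_ceiling_prop}(iii). The paper's proof is just a two-sentence citation of these same two results, so your additional unpacking of where the underlying ideas come from is accurate but not a different approach.
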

\begin{proof} Theorem \ref{thm: R_complete}  gives the equivalence $(i) \Leftrightarrow (ii)$. Proposition \ref{prop: ray_ceiling_prop}$(iii)$ gives that $(ii) \Leftrightarrow (iii)$. 

\end{proof}

In case $[u] \leq [v]$, using \eqref{eq: slope formula}, the expression for $d_\mathcal S([u],[v])$ is especially simple : 
\begin{lemma}\label{lem: d_s_monotone} If $[u], [v] \in \mathcal S(X,\theta)$ is such that $[u] \leq [v]$ then
$$d_\mathcal S([u],[v]) = \frac{1}{n+1}  \sum_{j=0}^n \bigg(\int_X \theta^j \wedge \theta_v^{n-j}-\int_X \theta^j \wedge \theta_u^{n-j} \bigg).
$$
\end{lemma}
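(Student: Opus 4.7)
The plan is to exploit the monotonicity hypothesis $[u]\leq[v]$ to collapse the chordal distance $d_\mathcal S$ into a difference of radial Monge--Amp\`ere energies, then substitute via the slope formula \eqref{eq: slope formula}.

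First I would shift $v$ by a constant so that $u\leq v$ pointwise; this is harmless since $r[\cdot]$ and the displayed right-hand side depend only on the singularity type. The rays $\{r[\psi]_t\}_t$ were constructed as increasing limits (in $l$) of geodesic segments joining $V_\theta$ to $\max(\psi,V_\theta-l)$. Monotonicity of geodesic segments in their endpoints (comparison principle, \cite[Proposition 3.2]{DDL1}) therefore propagates $u\leq v$ to $r[u]_t \leq r[v]_t$ for every $t\geq 0$.

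Second, from $r[u]_t\leq r[v]_t$ we get $P(r[u]_t,r[v]_t)=r[u]_t$, so the definition \eqref{def d1} of $d_1$ telescopes:
$$d_1(r[u]_t,r[v]_t) = I(r[u]_t) + I(r[v]_t) - 2 I(r[u]_t) = I(r[v]_t) - I(r[u]_t).$$
Dividing by $t$, letting $t\to\infty$, and using the definitions $d_\mathcal S([u],[v]) = \lim_t d_1(r[u]_t,r[v]_t)/t$ and $I_\mathcal S[\psi] = \lim_t I(r[\psi]_t)/t$, one obtains $d_\mathcal S([u],[v]) = I_\mathcal S[v] - I_\mathcal S[u]$.

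Third I would apply the slope formula \eqref{eq: slope formula} to both $I_\mathcal S[v]$ and $I_\mathcal S[u]$: the $\psi$-independent term $-\int_X \theta_{V_\theta}^n$ cancels in the difference, and a cosmetic replacement of $\theta_{V_\theta}^j$ by $\theta^j$ inside the mixed non-pluripolar masses (valid since $V_\theta$ has minimal singularities, so the mixed masses are cohomological invariants of the class $\{\theta\}$ as in \cite{BEGZ10}) yields the stated formula. Of the three steps the only one of substance is the propagation $u\leq v \Rightarrow r[u]_t\leq r[v]_t$; the rest is bookkeeping. The only minor snag is the last notational identification of $\theta^j$ with $\theta_{V_\theta}^j$ in the mixed masses, but this is a direct consequence of the minimal-singularity property of $V_\theta$ together with integration by parts for non-pluripolar products.
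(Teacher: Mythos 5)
Your argument is correct and is essentially the proof the paper has in mind (the paper merely says the lemma follows from the slope formula \eqref{eq: slope formula} without writing out the details). You correctly observe that $u\le v$ (after shifting by a constant) propagates to $r[u]_t\le r[v]_t$, which makes $P(r[u]_t,r[v]_t)=r[u]_t$ and collapses \eqref{def d1} to $d_1(r[u]_t,r[v]_t)=I(r[v]_t)-I(r[u]_t)$; dividing by $t$, letting $t\to\infty$, and subtracting the two slope formulas gives the claim.

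One small caveat on your final step. The formula as printed in the statement writes $\theta^j$ rather than $\theta_{V_\theta}^j$, and you explain the identification by appealing to "cohomological invariance of mixed masses." That framing is not quite right: $\int_X\theta_{V_\theta}^j\wedge\theta_v^{n-j}$ is certainly not a cohomological invariant of $\{\theta\}$ alone (it depends on $v$), and for non-K\"ahler $\theta$ the expression $\theta^j\wedge\theta_v^{n-j}$ is not even a well-defined non-pluripolar product, since $\theta$ need not be positive. The correct reading is simply that the paper's notation $\theta^j$ is shorthand for $\theta_{V_\theta}^j$ (consistent with the usage in Lemma \ref{lem: monotone_lim_S}, Lemma \ref{lem: mixed_MA_dC_conv}, and Lemma \ref{lem: from omega to two omega}), so no replacement is actually being made. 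This is a notational point only and does not affect the validity of your argument.
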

Observe that, it is a direct consequence of the above that if $[u]\leq [v]\leq [w]$ then $d_\mathcal S([u],[w])\geq d_\mathcal S([v],[w]) $.

When $[u] \not \leq [v]$, then a similar simple expression for $d_\mathcal S([u],[v])$ may not be available, however one can find a useful expression that totally governs the behavior of $d_\mathcal S([u],[v])$, as shown in Proposition \ref{prop: poor_Pythagorean_S}  below.

It is clear that for $[u],[v] \in \mathcal S(X,\theta)$ it makes sense to define $[\max(u,v)] \in \mathcal S(X,\theta)$, which does not depend on the choice of representatives $u,v \in \textup{PSH}(X,\theta)$. We start by arguing that 
\begin{equation}\label{eq: max_R_S_are_the_same}
\textup{max}_\mathcal R(r[u]_t,r[v]_t) = r[\max(u,v)]_t, \ \  t \geq 0,
\end{equation}
where $\{\textup{max}_\mathcal R(r[u]_t,r[v]_t)\}_t$ is the smallest ray that lies above $\max(r[u]_t,r[v]_t)$ that was constructed in Section \ref{sec: chordal geometry}. But $\{r[\max(u,v)]_t\}_t$ has this ``extremal" property as well. Indeed by construction we have $\max(r[u]_t,r[v]_t) \leq r[\max(u,v)]_t$, and any ray $\{w_t\}_t$ that satisfies $\max(r[u]_t,r[v]_t) \leq w_t$ has to also satisfy $r[\max(u,v)]_t \leq w_t$. 

Next we notice that \eqref{eq: max_R_S_are_the_same} and Proposition \ref{prop: poor_Pythagorean_R} allow to establish the following decomposition inequality for the $d_\mathcal S$ pseudo-metric:
\begin{prop} \label{prop: poor_Pythagorean_S} There exists $C>1$ such that, for all $[u],[v] \in \mathcal S(X,\theta)$,
\begin{equation}
 d_\mathcal S([u],[v]) \leq d_\mathcal S([u],[\max(u,v)]) + d_\mathcal S([\max(u,v)],[v])\leq C d_\mathcal S([u],[v]).
\end{equation}
\end{prop}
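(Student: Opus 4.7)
The plan is to reduce the statement to its counterpart on the space of geodesic rays, Proposition \ref{prop: poor_Pythagorean_R}, by transporting everything along the embedding $r[\cdot] : \mathcal S(X,\theta) \to \mathcal R(X,\theta)$ and the definition $d_\mathcal S([u],[v]) = d_1^c(\{r[u]_t\}_t,\{r[v]_t\}_t)$. The first inequality in the display is simply the triangle inequality for the pseudo-metric $d_\mathcal S$, which the preceding results (Theorem \ref{criteria d_1 vanishing} and the fact that $d_1^c$ is a metric) guarantee. So the real content is the upper bound by $C\, d_\mathcal S([u],[v])$.

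For the second inequality, the key observation is the identity \eqref{eq: max_R_S_are_the_same}, already recorded just above the statement, namely
$$\textup{max}_\mathcal R(r[u]_t, r[v]_t) = r[\max(u,v)]_t, \qquad t\geq 0.$$
Using this identity, together with the definition of $d_\mathcal S$, both terms on the left of the desired inequality translate directly into the two terms appearing in the chordal decomposition inequality on rays:
$$d_\mathcal S([u],[\max(u,v)]) = d_1^c(\{r[u]_t\}_t, \{\textup{max}_\mathcal R(r[u]_t, r[v]_t)\}_t),$$
$$d_\mathcal S([\max(u,v)],[v]) = d_1^c(\{\textup{max}_\mathcal R(r[u]_t, r[v]_t)\}_t, \{r[v]_t\}_t).$$
Adding these two and invoking Proposition \ref{prop: poor_Pythagorean_R} applied to the two rays $\{r[u]_t\}_t$ and $\{r[v]_t\}_t$ immediately yields the upper bound by $C\, d_1^c(\{r[u]_t\}_t, \{r[v]_t\}_t) = C\, d_\mathcal S([u],[v])$, with the same constant $C = C(n) > 1$ as in Proposition \ref{prop: poor_Pythagorean_R} (which itself came from Proposition \ref{remark 5.6: adaptation Dar15}).

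Since the proof is essentially a two-line translation once \eqref{eq: max_R_S_are_the_same} is in hand, there is no real obstacle here. The only thing worth double-checking in the write-up is that the representative-independence of $[\max(u,v)]$ is consistent with the construction of $\textup{max}_\mathcal R$, but this is clear because adding constants to $u$ or $v$ does not change $r[u]_t$ or $r[v]_t$ (the rays depend only on the singularity types), and hence does not change $\textup{max}_\mathcal R(r[u]_t,r[v]_t) = r[\max(u,v)]_t$ either.
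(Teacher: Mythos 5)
Your proof is correct and follows exactly the paper's own route: the paper states this proposition immediately after establishing the identity $\textup{max}_\mathcal R(r[u]_t,r[v]_t) = r[\max(u,v)]_t$ and explicitly remarks that this identity combined with Proposition~\ref{prop: poor_Pythagorean_R} yields the result. Your write-up fills in precisely those translation steps and nothing more.
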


As $d_\mathcal S([\psi],[\phi]) = I_\mathcal S[\psi] - I_\mathcal S[\phi]$ if $[\psi] \geq [\phi]$, we note the following corollary of \eqref{eq: slope formula}, somewhat reminiscent of \cite[Proposition 4.9]{Dar15}:
\begin{lemma}\label{lem: monotone_lim_S} Suppose $u_j,u \in \textup{PSH}(X,\theta)$ are such that  either $[u_j] \leq [u]$ or $[u] \leq [u_j]$. Then  $d_\mathcal S([u_j],[u]) \to 0$ if and only if $\int_X \theta_{V_\theta}^k \wedge \theta_{u_j}^{n-k} \to \int_X \theta_{V_\theta}^k \wedge \theta_{u}^{n-k}, \ k \in \{0,\ldots,n\}$.
\end{lemma}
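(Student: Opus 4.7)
The plan is to observe that the lemma is essentially a bookkeeping consequence of the slope formula \eqref{eq: slope formula} (equivalently, the explicit expression for $d_\mathcal S$ in Lemma \ref{lem: d_s_monotone}) combined with the monotonicity of relative non-pluripolar masses with respect to singularity type. Once $d_\mathcal S([u_j],[u])$ is written as a sum of $n+1$ signed differences of mixed masses, and once one observes that under the hypothesis $[u_j]\leq [u]$ (or $[u]\leq [u_j]$) each of these signed differences has a definite sign, both directions of the equivalence become automatic.

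More concretely, I would proceed as follows. Pass to a subsequence if necessary so that one of the two inclusions of singularity types holds along the whole sequence; by symmetry of $d_\mathcal S$ we may assume $[u_j]\leq [u]$ for all $j$. Then Lemma \ref{lem: d_s_monotone} gives
$$d_\mathcal S([u_j],[u]) \;=\; \frac{1}{n+1}\sum_{k=0}^{n}\left(\int_X \theta_{V_\theta}^k\wedge \theta_u^{n-k}-\int_X \theta_{V_\theta}^k\wedge \theta_{u_j}^{n-k}\right).$$
By \cite[Theorem 1.1]{DDL2}, the map $[v]\mapsto \int_X \theta_{V_\theta}^k\wedge \theta_v^{n-k}$ is monotone on singularity types, so each summand on the right is non-negative. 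Hence $d_\mathcal S([u_j],[u])$ is a finite sum of non-negative quantities.

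The two directions now follow at once. For $(\Leftarrow)$, if each of the mixed masses $\int_X \theta_{V_\theta}^k\wedge \theta_{u_j}^{n-k}$ converges to $\int_X \theta_{V_\theta}^k\wedge \theta_u^{n-k}$, the displayed formula shows directly that $d_\mathcal S([u_j],[u]) \to 0$. Conversely, for $(\Rightarrow)$, since a sum of $n+1$ non-negative terms tends to zero if and only if each term does, $d_\mathcal S([u_j],[u]) \to 0$ forces each individual mass to converge.

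I do not anticipate any real obstacle: the only point that requires attention is justifying the placement of absolute values (equivalently, the sign of each summand), which is exactly where the comparability hypothesis $[u_j]\leq [u]$ or $[u]\leq [u_j]$ is used via the monotonicity of non-pluripolar masses. Without such comparability one could not split the sum term-by-term, and indeed this is precisely the reason why Proposition \ref{prop: poor_Pythagorean_S} is needed to handle the general case.
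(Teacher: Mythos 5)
Your argument is correct and follows the route the paper has in mind: the paper offers no written proof, presenting the lemma as an immediate corollary of the slope formula \eqref{eq: slope formula} via Lemma~\ref{lem: d_s_monotone}, and you derive it exactly so, by writing $d_\mathcal S([u_j],[u])$ as $\tfrac{1}{n+1}$ times a sum of $n+1$ mixed-mass differences, each non-negative by the monotonicity of non-pluripolar masses, so the sum tends to $0$ if and only if each term does. One small remark: the passage to a subsequence is unnecessary (and, taken literally, only yields the conclusion along that subsequence unless you invoke a sub-subsequence argument) — for each $j$ individually, whichever of the two inclusions holds, one has
\[
d_\mathcal S([u_j],[u])=\frac{1}{n+1}\sum_{k=0}^{n}\left|\int_X\theta_{V_\theta}^k\wedge\theta_{u_j}^{n-k}-\int_X\theta_{V_\theta}^k\wedge\theta_u^{n-k}\right|,
\]
and the equivalence is then immediate for the entire sequence.
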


It follows from \eqref{eq: I_R_cont} that $|I_\mathcal S[u] - I_\mathcal S[v]| \leq d_\mathcal S([u],[v])$. By the next lemma it turns out that a similar statement holds for the individual components of the sum in \eqref{eq: slope formula} as well:
\begin{lemma}\label{lem: mixed_MA_dC_conv} There exists $C>1$ such that for all $k \in \{1,\ldots,n \}$ and $[u],[v] \in \mathcal S(X,\theta)$ we have
$$\bigg|\int_X \theta_{V_\theta}^k \wedge \theta^{n-k}_u -  \int_X \theta_{V_\theta}^k \wedge \theta^{n-k}_v\bigg| \leq C d_\mathcal S([u],[v]).$$
\end{lemma}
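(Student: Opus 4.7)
The strategy is to first handle the comparable case $[u] \leq [v]$ using the explicit formula of Lemma \ref{lem: d_s_monotone}, and then bootstrap to the general case by running through $[\max(u,v)]$ via the decomposition inequality of Proposition \ref{prop: poor_Pythagorean_S}.

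Assume first that $[u] \leq [v]$. By \cite[Theorem 1.1]{DDL2} (monotonicity of mixed non-pluripolar masses), for every $j \in \{0,\ldots,n\}$ we have
\[
\int_X \theta_{V_\theta}^{j}\wedge \theta_{u}^{n-j} \;\leq\; \int_X \theta_{V_\theta}^{j}\wedge \theta_{v}^{n-j},
\]
so each summand in the expression of $d_\mathcal S([u],[v])$ given by Lemma \ref{lem: d_s_monotone} is non-negative. Consequently, for the specific index $k \in \{1,\ldots,n\}$,
\[
0 \;\leq\; \int_X \theta_{V_\theta}^{k}\wedge \theta_{v}^{n-k} - \int_X \theta_{V_\theta}^{k}\wedge \theta_{u}^{n-k} \;\leq\; (n+1)\, d_\mathcal S([u],[v]).
\]
This proves the lemma, with constant $n+1$, in the comparable case (and by symmetry also when $[v] \leq [u]$).

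For arbitrary $[u],[v] \in \mathcal S(X,\theta)$, set $[w] := [\max(u,v)]$. Then $[u] \leq [w]$ and $[v] \leq [w]$, so the comparable case applied twice yields
\[
\bigg|\int_X \theta_{V_\theta}^{k}\wedge \theta_{u}^{n-k} - \int_X \theta_{V_\theta}^{k}\wedge \theta_{w}^{n-k}\bigg| \leq (n+1)\, d_\mathcal S([u],[w]),
\]
and analogously for $v$ in place of $u$. Adding these two estimates via the triangle inequality for the absolute value, and then invoking Proposition \ref{prop: poor_Pythagorean_S}, we obtain
\[
\bigg|\int_X \theta_{V_\theta}^{k}\wedge \theta_{u}^{n-k} - \int_X \theta_{V_\theta}^{k}\wedge \theta_{v}^{n-k}\bigg| \leq (n+1)\bigl(d_\mathcal S([u],[w]) + d_\mathcal S([w],[v])\bigr) \leq C(n+1)\, d_\mathcal S([u],[v]),
\]
where $C>1$ is the absolute constant furnished by Proposition \ref{prop: poor_Pythagorean_S}. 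This is the desired estimate.

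There is no real obstacle here: the only substantive ingredients are the monotonicity of mixed masses from \cite[Theorem 1.1]{DDL2}, the explicit monotone formula from Lemma \ref{lem: d_s_monotone}, and the decomposition inequality of Proposition \ref{prop: poor_Pythagorean_S}; the rest is a triangle inequality maneuver through $[\max(u,v)]$. The only point that requires a bit of care is ensuring that the individual summands in Lemma \ref{lem: d_s_monotone} are all of the same sign so that bounding one of them by the whole sum is legitimate, and this is exactly what monotonicity of mixed masses provides.
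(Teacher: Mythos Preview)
Your proof is correct and follows essentially the same approach as the paper: first settle the comparable case using monotonicity of mixed masses so that one summand is bounded by the full sum defining $d_\mathcal S$, then pass through $[\max(u,v)]$ and invoke Proposition \ref{prop: poor_Pythagorean_S}. The only cosmetic difference is that the paper phrases the comparable case via $|I_\mathcal S[\psi]-I_\mathcal S[\phi]| = d_\mathcal S([\psi],[\phi])$ together with the slope formula \eqref{eq: slope formula}, whereas you invoke the equivalent Lemma \ref{lem: d_s_monotone} directly.
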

\begin{proof} If $[\psi] \leq [\phi]$ then \cite[Theorem 1.1]{DDL2} implies that $\int_X \theta_{V_\theta}^k\wedge \theta^{n-k}_\psi \leq  \int_X \theta_{V_\theta}^k\wedge \theta^{n-k}_\phi, \ k \in \{0,\ldots,n \}$. As a result of this and \eqref{eq: slope formula} we can write that
\begin{equation*}
\frac{1}{n+1}\bigg|\int_X \theta_{V_\theta}^k\wedge \theta^{n-k}_\psi -  \int_X \theta_{V_\theta}^k\wedge \theta^{n-k}_\phi\bigg| \leq \big|I_\mathcal S[\psi] - I_\mathcal S[\phi]\big | = d_\mathcal S([\psi],[\phi]).
\end{equation*}
For general $[u],[v] \in \mathcal S(X,\theta)$, using this last inequality and Proposition \ref{prop: poor_Pythagorean_S} we can conclude:
\begin{flalign*}
\bigg|\int_X  \theta_{V_\theta}^k \wedge \theta^{n-k}_u &-  \int_X \theta_{V_\theta}^k\wedge \theta^{n-k}_v\bigg|  \\
&\leq \bigg|\int_X \theta_{V_\theta}^k\wedge \theta^{n-k}_u -  \int_X \theta_{V_\theta}^k\wedge \theta^{n-k}_{\max(u,v)}\bigg| + \bigg|\int_X \theta_{V_\theta}^k\wedge \theta^{n-k}_{\max(u,v)} -  \int_X \theta_{V_\theta}^k\wedge \theta^{n-k}_v\bigg|\\
& \leq (n+1)d_\mathcal S([u],[\max(u,v)]) + (n+1)d_\mathcal S([\max(u,v)],[v])\\
& \leq (n+1)C d_\mathcal S([u],[v]). 
\end{flalign*}
\end{proof}

\section{Discussion of completeness of singularity types}\label{sect: completeness}
In this section we prove the completeness  of the spaces $(\mathcal S_\delta(X,\theta),d_\mathcal S)$, for $\delta>0$.
Recall from the introduction that $\mathcal S_\delta(X,\theta)=\{[u] \in \mathcal S(X,\theta) \textup{ s.t. } \int_X \theta_u^n \geq \delta \}$. On the other hand, we will also show that $(\mathcal S(X,\theta),d_\mathcal S)$ is not complete.

First we show that ``increasing" sequences always have a $d_\mathcal S$-limit inside $\mathcal S(X,\theta)$:

\begin{lemma}\label{lem: mon_limit_complete} Let  $u_j \in \textup{PSH}(X,\theta)$ such that $\{u_j \}_j$ is increasing  and $u_j \leq 0$. Then $d_\mathcal S([u_j],[u]) \to 0$, where $u_j \nearrow u$ a.e. on $X$.
\end{lemma}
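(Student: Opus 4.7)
The plan is to reduce this to a statement about convergence of mixed non-pluripolar masses, which is then handled by combining the monotonicity theorem with the lower semicontinuity result for non-pluripolar products.

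Since $u_j$ is increasing and $u_j \leq u$ pointwise a.e. on $X$, we have $[u_j]\leq [u]$ for every $j$. By Lemma \ref{lem: monotone_lim_S}, it therefore suffices to prove that
\begin{equation*}
\int_X \theta_{V_\theta}^k \wedge \theta_{u_j}^{n-k} \;\longrightarrow\; \int_X \theta_{V_\theta}^k \wedge \theta_{u}^{n-k}, \qquad k \in \{0,\ldots,n\}.
\end{equation*}
The case $k=n$ is automatic since both sides equal $\int_X \theta_{V_\theta}^n$, so fix $k \in \{0,\ldots,n-1\}$.

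For the upper estimate, the monotonicity property \cite[Theorem 1.1]{DDL2} (recalled in the preliminaries, right after Theorem \ref{thm: lsc of non pluripolar product}) applies to $[u_j] \leq [u]$ to give
\begin{equation*}
\int_X \theta_{V_\theta}^k \wedge \theta_{u_j}^{n-k} \;\leq\; \int_X \theta_{V_\theta}^k \wedge \theta_{u}^{n-k}.
\end{equation*}
For the matching lower estimate, I would invoke Theorem \ref{thm: lsc of non pluripolar product}: monotone a.e.\ convergence of quasi-psh functions that are uniformly bounded above yields convergence in capacity of $u_j$ to $u$ (this is classical Bedford--Taylor theory and can also be read off from \cite{BT87}), so taking $u_1^k=\cdots=u_k^k = V_\theta$ (constant in $k$) and $u_{k+1}^j=\cdots=u_n^j=u_j$, the inequality \eqref{eq: key convergence} with $\chi_k \equiv 1$ provides
\begin{equation*}
\liminf_{j\to\infty}\int_X \theta_{V_\theta}^k \wedge \theta_{u_j}^{n-k} \;\geq\; \int_X \theta_{V_\theta}^k \wedge \theta_{u}^{n-k}.
\end{equation*}
Combining the two bounds gives the desired convergence of mixed masses, and Lemma \ref{lem: monotone_lim_S} then yields $d_{\mathcal S}([u_j],[u]) \to 0$.

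The only non-routine point I anticipate is justifying that monotone a.e.\ convergence $u_j \nearrow u$ implies convergence in capacity; this is the hypothesis needed to apply Theorem \ref{thm: lsc of non pluripolar product}. It follows from the standard observation that for any $\varepsilon>0$ the sets $\{u - u_j > \varepsilon\}$ form a decreasing family with pluripolar (in fact empty, up to a measure-zero set) intersection, so their capacities decay to zero by quasi-continuity of $u$ and the outer regularity of the Monge--Amp\`ere capacity $\textup{Cap}$.
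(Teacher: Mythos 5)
Your proof is correct and follows the same strategy as the paper's: reduce to convergence of the mixed masses $\int_X \theta_{V_\theta}^k \wedge \theta_{u_j}^{n-k}$ via Lemma \ref{lem: monotone_lim_S}, then establish that convergence. The only difference is that the paper simply cites \cite[Theorem 1.2, Remark 2.5]{DDL2} for the mixed-mass convergence, whereas you re-derive it by pairing the monotonicity inequality from \cite[Theorem 1.1]{DDL2} (for the upper bound) with the lower-semicontinuity Theorem \ref{thm: lsc of non pluripolar product} (for the lower bound), using the classical fact that monotone a.e.\ convergence of quasi-psh functions implies convergence in capacity. One minor notational glitch: when you write ``$u_1^k=\cdots=u_k^k = V_\theta$'' you are overloading the letter $k$ as both the number of $V_\theta$-factors and the index of the approximating sequence; the intent is clear, but you should use distinct symbols.
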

\begin{proof}From \cite[Theorem 1.2, Remark 2.5]{DDL2}, it follows that $\int_X \theta_{V_\theta}^k \wedge \theta^{n-k}_{u_j} \to \int_X \theta_{V_\theta}^k \wedge \theta^{n-k}_{u}$. Since $[u_j] \leq [u]$, the proof is finished after an application of Lemma \ref{lem: monotone_lim_S}.
\end{proof}

\begin{prop}\label{prop: Cauchy_monotone}
Suppose that $[u_j] \in  \mathcal S(X,\theta)$ is a $d_\mathcal S$-Cauchy sequence with $u_j \leq 0$. Then there exists a decreasing sequence $\{[v_j]\}_j \subset \mathcal S(X,\theta)$ that is equivalent with $\{[u_j]\}_j$, i.e. $d_\mathcal S([u_j],[v_j])\to 0$ as $j \to \infty$. 
\end{prop}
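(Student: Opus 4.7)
The plan is to extract a subsequence along which the gaps $\dS([u_j],[u_{j+1}])$ decay geometrically, and then to define $v_j := \textup{usc}(\sup_{k \geq j} u_k)$. Since $u_k \leq 0$ for all $k$, this supremum is bounded above by $0$, and since it dominates $u_j$, the usc regularization $v_j$ belongs to $\PSH(X,\theta)$ and satisfies $v_j \leq 0$. By construction $\{v_j\}_j$ is automatically non-increasing, so the main task is to show $\dS([u_j],[v_j]) \to 0$ along the chosen subsequence (the full Cauchy sequence is then handled by a routine interpolation of indices, using that $\{[u_j]\}_j$ is Cauchy).

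Let $C > 1$ denote the constant from Proposition \ref{prop: poor_Pythagorean_S}. Passing to a subsequence, still labeled $\{u_j\}_j$, I may assume $\dS([u_j],[u_{j+1}]) \leq (2C)^{-j}$ for every $j$. For each $j$ and $N \geq 0$ set $w_{j,N} := \max(u_j, u_{j+1}, \ldots, u_{j+N})$; being a finite maximum of $\theta$-psh functions $\leq 0$, we have $w_{j,N} \in \PSH(X,\theta)$, and for fixed $j$ the sequence $\{w_{j,N}\}_N$ is increasing in $N$ with $w_{j,N} \nearrow v_j$ a.e.

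The core estimate is the uniform bound $\alpha_{j,N} := \dS([u_j],[w_{j,N}]) \leq 2C(2C)^{-j}$, valid for every $N \geq 0$. Writing $w_{j,N} = \max(u_j, w_{j+1,N-1})$, applying Proposition \ref{prop: poor_Pythagorean_S} to the pair $u_j$, $w_{j+1,N-1}$, and then using the triangle inequality yields
$$\alpha_{j,N} \leq C\,\dS([u_j], [w_{j+1,N-1}]) \leq C\bigl((2C)^{-j} + \alpha_{j+1,N-1}\bigr).$$
Since $\alpha_{j,0} = 0$, iterating this recursion gives
$$\alpha_{j,N} \leq \sum_{k=0}^{N-1} C^{k+1}(2C)^{-(j+k)} = C(2C)^{-j}\sum_{k=0}^{N-1} 2^{-k} \leq 2C(2C)^{-j}.$$

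To conclude, Lemma \ref{lem: mon_limit_complete} applied to the increasing sequence $w_{j,N} \nearrow v_j$ yields $\dS([w_{j,N}],[v_j]) \to 0$ as $N \to \infty$ for each fixed $j$. Combining this with the uniform bound via the triangle inequality:
$$\dS([u_j],[v_j]) \leq \limsup_{N \to \infty} \bigl(\alpha_{j,N} + \dS([w_{j,N}], [v_j])\bigr) \leq 2C(2C)^{-j},$$
which tends to $0$ as $j \to \infty$. The main subtlety is the multiplicative loss of a factor $C$ at each step of the induction on $N$; this forces the subsequence to be chosen so that consecutive gaps decrease strictly faster than $C^{-j}$, and the choice $(2C)^{-j}$ is exactly fast enough to absorb the blow-up while leaving a geometric sum that telescopes.
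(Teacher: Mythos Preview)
Your proof is correct and follows essentially the same route as the paper: pass to a subsequence with geometrically decaying gaps, set $v_j := \textup{usc}(\sup_{k \geq j} u_k)$, approximate by the finite maxima $w_{j,N}=\max(u_j,\ldots,u_{j+N})$, apply Proposition \ref{prop: poor_Pythagorean_S} together with the triangle inequality to iterate the estimate for $\dS([u_j],[w_{j,N}])$, and finish with Lemma \ref{lem: mon_limit_complete}. The only cosmetic differences are your choice of decay rate $(2C)^{-j}$ versus the paper's $C^{-2j}$ and notation; the argument is otherwise the same.
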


\begin{proof}We can assume that $d_\mathcal S([u_j],[u_{j+1}]) \leq C^{-2j}$, where $C$ is the constant of Proposition \ref{prop: poor_Pythagorean_S}. Let 
\begin{equation}\label{eq: v_j_def}
v_j := \textup{usc}\Big(\sup_{k \geq j} u_k\Big) \in \textup{PSH}(X,\theta).
\end{equation}
Lemma \ref{lem: mon_limit_complete} implies that $\lim_l d_\mathcal S([v^l_j],[v_j]) = 0$, where 
$v^l_j =\sup_{k \in \{j,\ldots, j+l\}} u_k$.
To finish the argument, we show that $ \lim_{l}d_\mathcal S([u_j],[v_j^l])=d_\mathcal S([u_j],[v_j])\to 0$ as $j \to \infty$. Using the triangle inequality and Proposition \ref{prop: poor_Pythagorean_S} we get
\begin{flalign*}
d_\mathcal S([u_j],[v_j^l]) &= d_\mathcal S([u_j],[\max(u_j,v^{l-1}_{j+1})]) \leq C d_\mathcal S([u_j],[v^{l-1}_{j+1}])\\ 
& \leq C(d_\mathcal S([u_j],[u_{j+1}]) + d_\mathcal S([u_{j+1}],[v^{l-1}_{j+1})]).
\end{flalign*}
After iterating the above inequality $l$ times and observing that $d_\mathcal S([u_{j+l}], [v_{j+l}^0])=0$, we conclude that  
\begin{flalign}\label{eq: v_j_u_j_C}
d_\mathcal S([u_j],[v_j^l]) &\leq \sum_{k \in \{j,\ldots,j+l-1\}} C^{k+1-j} d_\mathcal S([u_k],[u_{k+1}]) = \sum_{k \in \{j,\ldots,j+l-1\}} C^{k+1-j} \frac{1}{C^{2k}} \nonumber\\
&\leq \sum_{k \geq j} C^{k+1-j} \frac{1}{C^{2k}}=\sum_{k \geq j} \frac{1}{C^{k+j -1}}  \leq \frac{1}{C^{j-1}}\frac{C}{C-1}.
\end{flalign}
\end{proof}

\subsection{Completeness of $\mathcal S_\delta(X,\theta)$}

Under the assumption of non-collapsing mass, we will show below that decreasing $d_\mathcal S$-Cauchy sequences do indeed converge, implying completeness of $\mathcal S_\delta(X,\theta)$, via Proposition \ref{prop: Cauchy_monotone}. For this we will need the following important lemma:

\begin{lemma}\label{lem: subextension}
Assume that $u,v\in \psh(X,\theta)$, $u \leq v$, $\int_X \theta_u^n>0$ and $b>1$ is such that 
\begin{equation}\label{eq: non-collapsing n}
b^n\int_X \theta_{u}^n>(b^n-1)\int_X \theta_{v}^n.
\end{equation}
Then $P(bu+(1-b)v) \in \psh(X,\theta)$. 
\end{lemma}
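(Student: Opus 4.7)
The assertion is that the envelope $P(bu+(1-b)v)\in \textup{PSH}(X,\theta)$, equivalently that there exists a $\theta$-psh function below $f:=bu+(1-b)v=bu-(b-1)v$. My plan is to produce such a function as a subsequential limit of envelopes built from minimal-singularity truncations of $u$ and $v$, using the strict mass hypothesis to prevent the limit from degenerating to $-\infty$.

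After a harmless translation so that $u\leq v\leq V_\theta$ and $\sup_X V_\theta=0$, I would introduce the truncations $u_k:=\max(u,V_\theta-k)$ and $v_k:=\max(v,V_\theta-k)$, both $\theta$-psh with minimal singularities, together with $f_k:=bu_k-(b-1)v_k$. Since $u_k,v_k$ have minimal singularities, $f_k$ is bounded modulo $V_\theta$, so the envelope $\phi_k:=P_\theta(f_k)$ is a genuine element of $\textup{PSH}(X,\theta)$; moreover $\phi_k\leq f_k\leq u_k\leq 0$, where the middle inequality uses $u_k\leq v_k$. The goal is to pass to a subsequential limit as $k\to\infty$: since $u_k\searrow u$ and $v_k\searrow v$, the limit should then satisfy $\phi\leq f$.

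The heart of the argument is a uniform lower bound $\int_X \theta_{\phi_k}^n\geq \delta>0$. Writing $\phi_k+(b-1)v_k\leq bu_k$ between two $b\theta$-psh functions, with equality on the contact set $E_k:=\{\phi_k=f_k\}$, plurifine locality (Lemma \ref{lem: plurifine}) gives
\[
\mathbbm{1}_{E_k}\bigl(\theta_{\phi_k}+(b-1)\theta_{v_k}\bigr)^n=\mathbbm{1}_{E_k}\,b^n\theta_{u_k}^n.
\]
Since $\theta_{\phi_k}^n$ is concentrated on $E_k$ (the standard envelope property), a binomial expansion combined with the Khovanskii--Teissier type inequality $\int\theta_{\phi_k}^{n-j}\wedge\theta_{v_k}^j \leq \bigl(\int\theta_{\phi_k}^n\bigr)^{(n-j)/n}\bigl(\int\theta_{v_k}^n\bigr)^{j/n}$ for $0<j<n$, together with the monotone convergences $\int\theta_{u_k}^n\to\int\theta_u^n$ and $\int\theta_{v_k}^n\to\int\theta_v^n$ as $k\to\infty$, should produce an inequality relating $\int\theta_{\phi_k}^n$, $b^n\int_{E_k}\theta_{u_k}^n$ and $\int\theta_{v_k}^n$. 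The strict hypothesis $b^n\int\theta_u^n>(b^n-1)\int\theta_v^n$ then forces $\liminf_k\int\theta_{\phi_k}^n>0$.

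With the mass lower bound and the uniform ceiling $\phi_k\leq 0$ in place, compactness (cf.\ Lemma \ref{lem: compactness and model type}) delivers a subsequential $L^1$-limit $\phi\in \textup{PSH}(X,\theta)$, and the upper-semicontinuous regularization of $\limsup_j \phi_{k_j}$ is easily seen to lie below $f$. The main obstacle I anticipate is precisely the mass comparison: the naive Khovanskii--Teissier estimate produces $(b-1)^n$ rather than the sharper $(b^n-1)$ appearing in the hypothesis, so to obtain the exact form one will need to exploit the identity of measures on $E_k$ together with the mass monotonicity \cite[Theorem 1.1]{DDL2} applied to each mixed product $\theta_{\phi_k}^{n-j}\wedge\theta_{v_k}^j$ individually, rather than to a symmetric geometric mean.
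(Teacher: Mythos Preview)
Your scheme has a genuine gap at the compactness step, and the mass estimate you hope for does not control what you need it to control.

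First, a lower bound $\int_X\theta_{\phi_k}^n\geq\delta>0$ does \emph{not} yield relative compactness of $\{\phi_k\}$ in $L^1$: the non-pluripolar mass is invariant under adding constants, so nothing prevents $\sup_X\phi_k\to-\infty$. Lemma~\ref{lem: compactness and model type} requires a normalization of the supremum, not of the mass. Concretely, your truncations give only $V_\theta-bk\leq \phi_k\leq 0$, which is compatible with $\sup_X\phi_k\to-\infty$. Second, by truncating \emph{both} $u$ and $v$ you lose monotonicity of $\phi_k$ (since $bu_k$ decreases while $-(b-1)v_k$ increases), so you cannot fall back on a decreasing-limit argument either. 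Third, the plurifine identity on $E_k$ gives you control of $\int_{E_k}\theta_{u_k}^n$ via mixed products restricted to $E_k$, but for $j\geq 1$ the measures $\theta_{\phi_k}^{n-j}\wedge\theta_{v_k}^j$ are not concentrated on $E_k$, and $\int_{E_k}\theta_{u_k}^n$ has no obvious relation to $\int_X\theta_u^n$; the monotonicity theorem for mixed masses does not bridge this gap with the sharp constant $b^n-1$.

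The paper's argument avoids all three issues. One first replaces $v$ by its model envelope $\mathcal C(v)$ (this only makes the claim harder), then truncates \emph{only} $u$, setting $u_j:=\max(u,v-j)$ and $\varphi_j:=P(bu_j+(1-b)v)$. Now $\varphi_j$ is visibly decreasing with $[\varphi_j]=[v]$, and the model property of $v$ gives $\sup_X(\varphi_j-v)=\sup_X\varphi_j$. Assuming for contradiction that $\sup_X\varphi_j\to-\infty$, one uses the pointwise measure inequality $\theta_{\varphi_j}^n\leq b^n\mathbbm 1_{\{\varphi_j=bu_j+(1-b)v\}}\theta_{u_j}^n$ (Lemma~\ref{lem: basic MA inequality}) together with plurifine locality on the sublevel sets $\{\varphi_j\leq v-bk\}=\{u_j\leq v-k\}$ to obtain
\[
\int_X\theta_v^n=\int_X\theta_{\varphi_j}^n=\theta_{\varphi_j}^n(\varphi_j\leq v-bk)\leq b^n\Big(\int_X\theta_v^n-\int_{\{u>v-k\}}\theta_u^n\Big)
\]
for $j$ large, and letting $k\to\infty$ contradicts the hypothesis $b^n\int_X\theta_u^n>(b^n-1)\int_X\theta_v^n$. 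The key points you are missing are: work relative to the (model) potential $v$ rather than $V_\theta$; keep $v$ untruncated so the sequence is monotone; and estimate measures on sublevel sets rather than global masses.
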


\begin{proof}
If $P(bu+(1-b)\mathcal{C}(v)) \in \PSH(X,\theta)$ then so does $P(bu+(1-b)v)$ since $v \leq \mathcal{C}(v)$ and $(1-b)<0$. Therefore,  after possible replacing $v$ with $\mathcal C(v)$, we can assume that $v=\mathcal{C}(v)$. Since $\int_X \theta_v^n \geq \int_X \theta_u^n > 0$, we have that $P[v]=\mathcal C(v)$ (\cite[Remark 2.5, Theorem 3.12]{DDL2}).

For $j\in \mathbb{N}$ we set $u_j:=\max(u,v-j)$ and $\varphi_j:=P(bu_j+(1-b)v)$. Observe that $\varphi_j$ is a decreasing sequence of $\theta$-psh functions, whose singularity type is equal to $[v]$. The proof is finished if we can show that $\lim_j \varphi_j \not \equiv -\infty$ is a $\theta$-psh function. Assume by contradiction that $\sup_X \varphi_j \to -\infty$.  
It follows from Lemma \ref{lem: basic MA inequality} below that 
\begin{equation} \label{eq: sub extension 1}
\theta_{\varphi_j}^n \leq b^n\mathbbm{1}_{\{\varphi_j = bu_j+(1-b)v\}}\theta_{u_j}^n. 
\end{equation}
Fix $j>k>0$.  We note that $u_j=u$ on $\{u>v-k\}$ and, since  $u_j$ has singularity type equal to $[v]$, we have by \cite[Theorem 1.2]{WN19} and the plurifine locality, 
$$
\int_{\{u\leq v -k\}} \theta_{u_j}^n = \int_X \theta_{u_j}^n -\int_{\{u>v-k\}} \theta_{u_j}^n=\int_X \theta_{v}^n -\int_{\{u>v-k\}} \theta_{u}^n.
$$
Since $\{u_j \leq v-k\} = \{u\leq v-k\}$, from the above and \eqref{eq: sub extension 1} we obtain  
\begin{eqnarray}\label{eq: sub extension 2}
\theta_{\varphi_j}^n(\varphi_j\leq v-bk) &\leq & b^n \mathbbm{1}_{\{\varphi_j= bu_j+(1-b)v\}} \theta_{u_j}^n( \varphi_j\leq v-bk) \leq  b^n \theta_{u_j}^n (bu_j+(1-b)v \leq v-bk) \nonumber \\
& = & b^n \theta_{u_j}^n (u_j\leq v-k)\leq   b^n \theta_{u_j}^n (u\leq v-k) \leq  b^n\Big(\int_X \theta_{v}^n -\int_{\{u>v-k\}} \theta_u^n\Big).
\end{eqnarray}
Since $v = P[v]$ we have $\sup_X ( \varphi_j-v )=\sup_X \varphi_j\to -\infty$ by Lemma \ref{lem: compactness and model type}.  From this we see that $\{\varphi_j\leq v-bk\}=X$ for $j$ large enough, $k$ being fixed. Thus, letting $j\to +\infty$ in \eqref{eq: sub extension 2}, and then $k\to +\infty$ gives
$$
\int_X \theta_v^n \leq b^n \left (\int_X \theta_{v}^n -\int_X \theta_u^n\right),
$$
contradicting  \eqref{eq: non-collapsing n}.  Consequently,  $\varphi_j$ decreases to a  $\theta$-psh function, finishing the proof.
\end{proof}

\begin{lemma}\label{lem: basic MA inequality}
Assume that $b \geq 1$, and $u,v,P_{\theta}(bu+(1-b)v)\in \psh(X,\theta)$. Then the measure $\theta_{P_{\theta}(bu+(1-b)v)}^n$ is supported on the contact set $\{P_{\theta}(bu+(1-b)v)=bu+(1-b)v \}$, and
\begin{equation}\label{eq: measure_ineq_b}
\theta_{P_{\theta}(bu+(1-b)v)}^n \leq b^n\theta_{u}^n. 
\end{equation}
\end{lemma}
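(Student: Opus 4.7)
Write $\varphi := P_\theta(bu + (1-b)v)$. My plan proceeds in three steps.

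\textbf{Step 1: Support on the contact set.} I would first show that $\theta_\varphi^n$ is concentrated on $E := \{\varphi = bu + (1-b)v\}$, by the standard balayage argument for envelopes: wherever $\varphi$ lies strictly below the obstacle, it can be locally perturbed upward while remaining $\theta$-psh and below the obstacle, contradicting its maximality. The mild subtlety is that the obstacle $bu + (1-b)v = bu - (b-1)v$ is only a difference of quasi-psh functions, so the set $\{\varphi < bu + (1-b)v\}$ is not a priori plurifine-open. I would handle this by truncation: restrict to $\{v > V_\theta - k\}$ where $v$ is bounded below, run the classical argument on these plurifine-open strata, and let $k \to \infty$ using that $\theta_\varphi^n$ does not charge the pluripolar set $\bigcap_k \{v \leq V_\theta - k\}$.

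\textbf{Step 2: Auxiliary $\theta$-psh potential.} Introduce $w := \frac{1}{b}\varphi + \frac{b-1}{b}v$. Since $\theta + i\ddbar w = \frac{1}{b}\theta_\varphi + \frac{b-1}{b}\theta_v \geq 0$, we have $w \in \textup{PSH}(X,\theta)$. The bound $\varphi \leq bu - (b-1)v$ rearranges to $w \leq u$, with equality precisely on $E$. I would then argue that $\mathbbm{1}_E \theta_w^n \leq \mathbbm{1}_E \theta_u^n$ by the following plurifine-locality argument. Set $w_\epsilon := \max(w, u - \epsilon) \in \textup{PSH}(X,\theta)$. Applying Lemma \ref{lem: plurifine} on the plurifine-open sets $\{w > u - \epsilon\}$ (where $w_\epsilon = w$) and $\{w < u - \epsilon\}$ (where $w_\epsilon = u - \epsilon$) yields
\[
\theta_{w_\epsilon}^n \geq \mathbbm{1}_{\{w > u - \epsilon\}}\theta_w^n + \mathbbm{1}_{\{w < u - \epsilon\}}\theta_u^n.
\]
As $\epsilon \searrow 0$, $w_\epsilon \nearrow u$ monotonically (hence in capacity), and combined with the mass domination $\int_X \theta_{w_\epsilon}^n \leq \int_X \theta_u^n$ from \cite{WN19} and the semicontinuity of Theorem \ref{thm: lsc of non pluripolar product}, we get $\int_X \theta_{w_\epsilon}^n \to \int_X \theta_u^n$, hence $\theta_{w_\epsilon}^n \to \theta_u^n$ weakly. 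Passing to the limit in the previous inequality, using that $\{w > u - \epsilon\} \searrow E$ and $\{w < u - \epsilon\} \nearrow X \setminus E$ (as $w \leq u$), yields $\theta_u^n \geq \mathbbm{1}_E \theta_w^n + \mathbbm{1}_{X \setminus E}\theta_u^n$, i.e.\ $\mathbbm{1}_E \theta_w^n \leq \mathbbm{1}_E \theta_u^n$.

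\textbf{Step 3: Multilinearity.} Since $\theta_w = \frac{1}{b}\theta_\varphi + \frac{b-1}{b}\theta_v$ as positive $(1,1)$-currents, multilinearity of the non-pluripolar product gives
\[
\theta_w^n = \sum_{k=0}^n \binom{n}{k}\left(\frac{1}{b}\right)^k\left(\frac{b-1}{b}\right)^{n-k}\theta_\varphi^k \wedge \theta_v^{n-k} \geq \frac{1}{b^n}\theta_\varphi^n,
\]
since every summand is a non-negative measure. Combining this with Step 2 and the support statement from Step 1 produces
\[
\theta_\varphi^n = \mathbbm{1}_E \theta_\varphi^n \leq b^n\, \mathbbm{1}_E \theta_w^n \leq b^n\, \mathbbm{1}_E \theta_u^n \leq b^n \theta_u^n,
\]
finishing the proof.

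The main technical hurdle I anticipate is the rigorous treatment of Step 1, where the obstacle fails to be quasi-psh and so the standard envelope maximality argument needs a careful truncation-and-limit dressing. The auxiliary potential $w$ introduced in Step 2 is designed precisely to circumvent any further obstruction: once $\theta_\varphi^n$ is concentrated on $E$, all subsequent mass comparisons are between the genuinely $\theta$-psh potentials $w$ and $u$, for which the classical plurifine-locality machinery applies cleanly.
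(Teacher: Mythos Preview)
Your overall architecture is exactly the paper's: introduce the auxiliary potential $w=b^{-1}\varphi+(1-b^{-1})v\leq u$, use that $\theta_\varphi^n$ lives on the contact set $E=\{w=u\}$, bound $\mathbbm{1}_E\theta_w^n\leq\mathbbm{1}_E\theta_u^n$, and conclude via multilinearity. Your Step~2 is a correct self-contained proof of what the paper records separately as Lemma~\ref{lem: concentration max}, and Step~3 is identical to the paper's.

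The one substantive difference, and the one place I would flag a gap, is Step~1. Truncating $v$ on $\{v>V_\theta-k\}$ bounds $v$ from below but does nothing to the term $bu$, which is only upper semicontinuous; the obstacle $bu+(1-b)v$ remains neither lsc nor usc on those strata, so the ``classical'' balayage argument (which needs the non-contact set to be Euclidean open) still does not apply as stated. The paper resolves this by regularizing the \emph{other} potential: pick smooth $u_j\searrow u$ in $\PSH(X,\omega)$ (via \cite{BK07,Dem92}), so that $bu_j+(1-b)v$ is genuinely lsc, the non-contact set $\{\psi_j<bu_j+(1-b)v\}$ is open, and standard balayage gives $\int_X\min(bu_j+(1-b)v-\psi_j,1)\,\theta_{\psi_j}^n=0$; one then passes to the limit using Theorem~\ref{thm: lsc of non pluripolar product}. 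If you want to keep your truncation viewpoint, you would need instead to exploit that the non-contact set equals $\{w<u\}$, which \emph{is} plurifine open, and invoke a plurifine version of the maximality argument; but that is a different (and heavier) justification than the one you sketched.
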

\begin{proof}
Up to rescaling, we can assume that $\theta \leq \omega$,  and hence $\PSH(X,\theta) \subset \PSH(X,\omega)$.  Let $u_j \in C^\infty(X)\cap \PSH(X, \omega)$ be such that $u_j \searrow u$. This is possible thanks to \cite{BK07}, \cite{Dem92}.  Set $\psi_j:=P_{\theta}(bu_j+(1-b)v)$ and $\psi:=P_{\theta}(bu+(1-b)v)$ and note that $\psi_j\searrow  \psi$. Also, $bu_j+(1-b)v$ is lower semicontinuous hence the set $\{\psi_j<bu_j+(1-b)v\}$ is open. By a classical balayage argument, for each $j\in \NN$ the measure $\theta_{\psi_j}^n$ vanishes on the set  $\{\psi_j<bu_j+(1-b)v\}$. Since $\psi_j\leq bu_j+(1-b)v$ we have
$$
\int_X \min (bu_j+(1-b) v- \psi_j, 1) \theta_{\psi_j}^n =0.
$$
The functions in the integral are uniformly bounded with values in $[0,1]$, quasi-continuous, and (since $u_j$ and $\psi_j$ are $\omega$-psh functions decreasing  to $u$ and $\psi$ respectively) they converge in capacity to  $\min(bu+(1-b)v -\psi, 1)$, which is quasi-continuous and bounded on $X$. It follows from Theorem \ref{thm: lsc of non pluripolar product} that after letting $j\to +\infty$ in the above equality we obtain 
$$
\int_X \min (bu+(1-b) v- \psi, 1) \theta_{\psi}^n\leq \liminf_{j} \int_X \min (bu_j+(1-b) v- \psi_j, 1) \theta_{\psi_j}^n=0. 
$$
This implies that $\theta_{\psi}^n$ vanishes in the set $\{\psi<bu+(1-b)v\}$.

Now we prove \eqref{eq: measure_ineq_b}. 
Let $\varphi:= \frac{1}{b} P_{\theta}(bu+(1-b)v) + \left(1- \frac{1}{b}\right) v$. Note that $\varphi\leq u$. By Lemma \ref{lem: concentration max} below we then have
$$\mathbbm{1}_{\{\varphi=u \}} \theta_{\varphi}^n \leq \mathbbm{1}_{\{\varphi=u\}} \theta_{u}^n \leq \theta_{u}^n.$$
Moreover, by the above we know that $\theta_{P_{\theta}(bu+(1-b)v)}^n$ is supported on  $\{\varphi=u \}$, 
hence 
$$\frac{1}{b^{n}}\theta_{P_{\theta}(bu+(1-b)v)}^n= \frac{1}{b^{n}}\mathbbm{1}_{\{\varphi=u \}}\theta_{P_{\theta}(bu+(1-b)v)}^n\leq \mathbbm{1}_{\{\varphi=u \}}\theta_{\varphi}^n \leq \theta_{u}^n.$$
\end{proof}

\begin{lemma}\label{lem: concentration max} Let $\varphi,\psi\in \psh(X,\theta)$. Then
\begin{equation}\label{eq: Demailly_est}
\theta_{\max(\varphi, \psi)}^n \geq \mathbbm{1}_{\{\psi \leq \varphi\}}\theta_\varphi^n + \mathbbm{1}_{\{\varphi <\psi\}}\theta_\psi^n.
\end{equation}
In particular, if $\varphi \leq \psi$ then $\mathbbm{1}_{\{\varphi=\psi\}} \theta_\varphi^n \leq \mathbbm{1}_{\{\varphi=\psi\}} \theta_\psi^n.$
\end{lemma}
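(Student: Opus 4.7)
The plan is to split $X$ according to the sign of $\varphi-\psi$. On the plurifine open set $\{\varphi<\psi\}$ we have $\max(\varphi,\psi)=\psi$, so by Lemma \ref{lem: plurifine} (plurifine locality)
$$\mathbbm{1}_{\{\varphi<\psi\}}\theta_{\max(\varphi,\psi)}^n = \mathbbm{1}_{\{\varphi<\psi\}}\theta_\psi^n.$$
This delivers the second term of \eqref{eq: Demailly_est}. It therefore suffices to establish
$$\mathbbm{1}_{\{\psi\leq\varphi\}}\theta_{\max(\varphi,\psi)}^n \geq \mathbbm{1}_{\{\psi\leq\varphi\}}\theta_\varphi^n,$$
since plurifine locality gives equality on the subset $\{\psi<\varphi\}$; the genuinely non-trivial case is the contact set $\{\varphi=\psi\}$.

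For this I will use an $\varepsilon$-shift. Set $v_\varepsilon:=\max(\varphi+\varepsilon,\psi)$ for $\varepsilon>0$. The set $\{\psi<\varphi+\varepsilon\}$ is plurifine open and contains $\{\psi\leq\varphi\}$; since $v_\varepsilon=\varphi+\varepsilon$ there, plurifine locality yields
$$\mathbbm{1}_{\{\psi\leq\varphi\}}\theta_{v_\varepsilon}^n = \mathbbm{1}_{\{\psi\leq\varphi\}}\theta_\varphi^n,$$
and in particular, as positive Radon measures, $\theta_{v_\varepsilon}^n \geq \mathbbm{1}_{\{\psi\leq\varphi\}}\theta_\varphi^n$.

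The concluding step is to pass to the limit as $\varepsilon\to 0^+$. Since $v_\varepsilon\searrow \max(\varphi,\psi)$ uniformly (with error at most $\varepsilon$), convergence holds in capacity, and the potentials all share the singularity type of $\max(\varphi,\psi)$; by the monotonicity result of \cite{WN19} their total non-pluripolar masses agree, $\int_X \theta_{v_\varepsilon}^n = \int_X \theta_{\max(\varphi,\psi)}^n$. Thus Theorem \ref{thm: lsc of non pluripolar product} provides weak convergence $\theta_{v_\varepsilon}^n \to \theta_{\max(\varphi,\psi)}^n$. Testing the pointwise inequality against any continuous non-negative $\chi$ on $X$ and noting that its right hand side is independent of $\varepsilon$ yields $\int_X \chi \, \theta_{\max(\varphi,\psi)}^n \geq \int_X \chi\, \mathbbm{1}_{\{\psi\leq\varphi\}}\theta_\varphi^n$, whence
$$\theta_{\max(\varphi,\psi)}^n \geq \mathbbm{1}_{\{\psi\leq\varphi\}}\theta_\varphi^n$$
by Riesz representation. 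Adding this to the plurifine-local identity on $\{\varphi<\psi\}$ gives \eqref{eq: Demailly_est}. The stated particular case follows immediately: when $\varphi\leq\psi$ one has $\max(\varphi,\psi)=\psi$ and $\{\varphi=\psi\}\subset\{\psi\leq\varphi\}$, so $\mathbbm{1}_{\{\varphi=\psi\}}\theta_\psi^n = \mathbbm{1}_{\{\varphi=\psi\}}\theta_{\max(\varphi,\psi)}^n \geq \mathbbm{1}_{\{\varphi=\psi\}}\theta_\varphi^n$. The main technical point is the limit at the contact set, where the indicator fails to be continuous; the workaround is precisely the equality of total non-pluripolar masses, which upgrades the one-sided inequality of Theorem \ref{thm: lsc of non pluripolar product} into genuine weak convergence.
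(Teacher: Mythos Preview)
Your argument is correct, and it takes a genuinely different route from the paper's proof. The paper reduces to the bounded case by setting $\varphi_k=\max(\varphi,V_\theta-k)$, $\psi_k=\max(\psi,V_\theta-k)$, invokes the classical inequality \cite[Theorem~2.2.10]{bl} for potentials with minimal singularities, then multiplies by $\mathbbm{1}_{\{\varphi>V_\theta-k\}\cap\{\psi>V_\theta-k\}}$ and uses plurifine locality before letting $k\to\infty$. Your approach bypasses the external bounded-case result entirely: plurifine locality alone handles the strict-inequality sets, and the $\varepsilon$-shift $v_\varepsilon=\max(\varphi+\varepsilon,\psi)$ reduces the contact set to a weak-convergence statement, which you justify via Theorem~\ref{thm: lsc of non pluripolar product} together with equality of total masses (same singularity type). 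The paper's route is shorter but leans on an outside reference; yours is self-contained within the paper's own toolkit, at the cost of invoking the more sophisticated Theorem~\ref{thm: lsc of non pluripolar product}. One small presentational point: your final sentence ``adding this to the plurifine-local identity'' is slightly loose---what you actually need is to multiply your global inequality $\theta_{\max(\varphi,\psi)}^n\geq\mathbbm{1}_{\{\psi\leq\varphi\}}\theta_\varphi^n$ by $\mathbbm{1}_{\{\psi\leq\varphi\}}$ first (which preserves the inequality of positive measures) and then add the two disjoint pieces; you had already stated this correctly earlier in the proof, so this is cosmetic.
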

\begin{proof} Let $\psi_k := \max(\psi, V_\theta -k)$ and $\varphi_k := \max(\varphi, V_\theta -k)$.
It follows from \cite[Theorem 2.2.10]{bl} that 
$$\theta_{\max(\varphi_k, \psi_k)}^n \geq \mathbbm{1}_{\{\psi_k \leq \varphi_k\}}\theta_{\varphi_k}^n + \mathbbm{1}_{\{\varphi_k <\psi_k\}}\theta_{\psi_k}^n.$$
Multiplying with $\mathbbm{1}_{\{\varphi > V_\theta -k \} \cap  \{ \psi > V_\theta -k \}}$, and using plurifine locality (Lemma \ref{lem: plurifine})  we arrive at
$$\mathbbm{1}_{\{\varphi > V_\theta -k \} \cap  \{ \psi > V_\theta -k \}}\theta_{\max(\varphi, \psi)}^n \geq \mathbbm{1}_{\{\varphi > V_\theta -k \} \cap  \{ \psi > V_\theta -k \}\cap \{\psi \leq \varphi\}}\theta_{\varphi}^n + \mathbbm{1}_{\{\varphi > V_\theta -k \} \cap  \{ \psi > V_\theta -k \}\cap \{\varphi <\psi\}}\theta_{\psi}^n.$$
Letting $k \to \infty$, \eqref{eq: Demailly_est} follows. 
\end{proof}

Next we prove that along a decreasing sequence of fixed points of $\mathcal C$  the total masses converge.

\begin{prop}\label{prop: MA dec maxi} Let $u_j,u \in \textup{PSH}(X,\theta)$ be such that $\sup_X u_j=\sup_X u=0$, and $u_j$ converges in capacity to  $u$. Then
$$
\limsup_{j\rightarrow +\infty} \int_{\{u_j =0\}} \theta_{u_j}^n \leq \int_{\{u=0\}} \theta_u^n.
$$
If additionally $u_j=\mathcal{C}(u_j)$ then  
$
\lim_{j\to +\infty}\int_X \theta_{u_j}^n = \lim_{j\to +\infty}\int_{\{u_j =0\}} \theta_{u_j}^n =\int_{\{u=0\}} \theta_{u}^n  =\int_X \theta_{u}^n.
$
\end{prop}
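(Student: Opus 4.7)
The plan is to bound $\int_{\{u_j=0\}} \theta_{u_j}^n$ via the measures $\theta_{w_j}^n$ associated to the auxiliary potentials $w_j := \max(u_j, u) \in \PSH(X,\theta)$, then pass to the limit using that $\theta_{w_j}^n \to \theta_u^n$ weakly. Since $u \leq 0 = u_j$ on $\{u_j=0\}$, Lemma \ref{lem: concentration max} applied with $\varphi = u_j$, $\psi = u$ gives $\mathbbm{1}_{\{u_j=0\}} \theta_{u_j}^n \leq \theta_{w_j}^n$. For $\delta>0$, fix a continuous cutoff $\chi_\delta:\mathbb R \to [0,1]$ with $\chi_\delta \equiv 1$ on $[0,\infty)$ and $\chi_\delta \equiv 0$ on $(-\infty,-\delta]$. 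Noting that $w_j = 0$ on $\{u_j=0\}$ (since $u \leq 0$), we have $\chi_\delta(w_j) \equiv 1$ there, so multiplying the pointwise measure inequality by $\chi_\delta(w_j)$ and integrating yields
\begin{equation}\label{eq: proposal_main}
\int_{\{u_j=0\}} \theta_{u_j}^n \leq \int_X \chi_\delta(w_j)\, \theta_{w_j}^n.
\end{equation}

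The central technical point is establishing the mass convergence $\int_X \theta_{w_j}^n \to \int_X \theta_u^n$. The lower bound $\liminf_j \int_X \theta_{w_j}^n \geq \int_X \theta_u^n$ is immediate from Theorem \ref{thm: lsc of non pluripolar product}, since $w_j \to u$ in capacity. For the matching upper bound, introduce the decreasing sequence $V_N := (\sup_{k \geq N} w_k)^* \in \PSH(X,\theta)$, whose pointwise limit $V_\infty \in \PSH(X,\theta)$ satisfies $V_\infty \geq u$ (from $w_k \geq u$). Capacity convergence together with the domination $u \leq w_j \leq 0$ and $u \in L^1(X,\omega^n)$ yields $w_j \to u$ in $L^1(X,\omega^n)$; Hartogs' lemma for $\theta$-psh functions then forces $V_\infty \leq u$, hence $V_\infty = u$. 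The standard monotone decreasing continuity of non-pluripolar Monge--Amp\`ere products now gives $\int_X \theta_{V_N}^n \to \int_X \theta_u^n$, and since $w_j \leq V_j$ the mass monotonicity \cite{WN19} yields $\limsup_j \int_X \theta_{w_j}^n \leq \int_X \theta_u^n$.

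With mass convergence, Theorem \ref{thm: lsc of non pluripolar product} upgrades to weak convergence $\theta_{w_j}^n \to \theta_u^n$ against uniformly bounded quasi-continuous test functions converging in capacity. Applied to $\chi_\delta(w_j) \to \chi_\delta(u)$ in capacity (by continuity of $\chi_\delta$), this gives $\int_X \chi_\delta(w_j)\, \theta_{w_j}^n \to \int_X \chi_\delta(u)\, \theta_u^n$. Taking $\limsup_j$ in \eqref{eq: proposal_main} and then $\delta \to 0^+$ (dominated convergence, noting that $\chi_\delta(u) \searrow \mathbbm{1}_{\{u=0\}}$ as $u \leq 0$) yields the first inequality of the proposition.

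For the second claim, assume additionally $u_j = \mathcal{C}(u_j)$. If $\int_X \theta_{u_j}^n > 0$ then Proposition \ref{prop: C_operator_properties}(i) gives $u_j = P[u_j]$, whence $\theta_{u_j}^n \leq \mathbbm{1}_{\{u_j=0\}}\theta^n$ by the support property of $\theta_{P[\psi]}^n$ recorded before Theorem \ref{thm: DDL2_E_char}. In particular $\int_X \theta_{u_j}^n = \int_{\{u_j=0\}} \theta_{u_j}^n$ (the case of zero mass is trivial), and the first inequality combined with Theorem \ref{thm: lsc of non pluripolar product} sandwiches
$$
\int_X \theta_u^n \leq \liminf_j \int_X \theta_{u_j}^n \leq \limsup_j \int_X \theta_{u_j}^n = \limsup_j \int_{\{u_j=0\}} \theta_{u_j}^n \leq \int_{\{u=0\}} \theta_u^n \leq \int_X \theta_u^n,
$$
forcing equality throughout and yielding the stated chain of identities. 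The principal obstacle in the whole argument is the mass convergence step for $w_j$: the two directions are classical in spirit, but combining the Hartogs-type argument with the decreasing continuity of non-pluripolar Monge--Amp\`ere products is the delicate pluripotential-theoretic core of the proof.
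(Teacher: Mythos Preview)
There is a genuine gap in your mass convergence step for $w_j$. The claim that ``standard monotone decreasing continuity of non-pluripolar Monge--Amp\`ere products'' yields $\int_X \theta_{V_N}^n \to \int_X \theta_u^n$ for $V_N \searrow u$ is false without further hypotheses: non-pluripolar mass is only lower semicontinuous along decreasing sequences, not continuous. Concretely, take $X = \mathbb{CP}^1$, $\theta = \omega_{FS}$, $u(z) = \log|z|^2 - \log(1+|z|^2)$, and $u_j = \max(u, -j)$. Then $\sup_X u_j = \sup_X u = 0$ and $u_j \to u$ in capacity; moreover $w_j = \max(u_j, u) = u_j$ and, since $\{u_k\}_k$ is decreasing, $V_N = u_N$. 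But $\int_X \theta_{V_N} = \int_X \omega_{FS} = 1$ for every $N$, while the non-pluripolar mass $\int_X \theta_u = 0$. Thus both $\int_X \theta_{V_N}^n \to \int_X \theta_u^n$ and your intermediate goal $\int_X \theta_{w_j}^n \to \int_X \theta_u^n$ fail, so the weak convergence $\theta_{w_j}^n \to \theta_u^n$ you need in order to pass to the limit in your displayed inequality is unavailable. (The proposition itself is of course true in this example: $\{u_j = 0\} = \{u = 0\}$ is a single point carrying no $\theta_{u_j}$-mass.)

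The paper avoids this obstacle by never invoking mass convergence of the untruncated potentials. Instead it truncates to minimal singularities, $u_j^C := \max(u_j, V_\theta - C)$, so that classical Bedford--Taylor continuity applies in the ample locus; since $\{u_j = 0\} \subset \{V_\theta = 0\}$ one estimates
\[
\int_{\{u_j=0\}} \theta_{u_j}^n = \int_{\{u_j=0\}} \theta_{u_j^C}^n \leq \int_{\{V_\theta=0\}} e^{\beta u_j}\, \theta_{u_j^C}^n \longrightarrow \int_{\{V_\theta=0\}} e^{\beta u}\, \theta_{u^C}^n
\]
by weak convergence on the compact set $\{V_\theta = 0\}$, then lets $\beta \to +\infty$ and uses plurifine locality to recover $\int_{\{u=0\}} \theta_u^n$. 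Your sandwich argument for the second claim is correct and coincides with the paper's.
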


\begin{proof}
For each $C>0$ and each $v\in \psh(X,\theta)$ we set $v^C:=\max(v,V_\theta-C)$. Since $u_j \leq V_{\theta}$, we have that $\{u_j=0\} \subset \{V_{\theta} =0\}$. 
For each $\beta>0$, using plurifine locality (Lemma \ref{lem: plurifine}) we can write
\begin{eqnarray*}
\limsup_{j\to +\infty} \int_{\{u_j=0\}} \theta_{u_j}^n =  \limsup_{j\to +\infty}\int_{\{u_j=0\}} \theta_{u_j^C}^n  \leq  \limsup_{j\to +\infty}\int_{\{V_{\theta}=0\}} e^{\beta u_j}\theta_{u_j^C}^n\leq \int_{\{V_{\theta}=0\}} e^{\beta u}  \theta_{u^C}^n,\end{eqnarray*}
where the last inequality follows from \cite[Theorem 4.26]{GZ17}. Indeed,   since $e^{\beta u_j}$ is a sequence of bounded quasi-continuous functions converging in capacity to $e^{\beta u}$ and $u_j^C$ converges in capacity to $u^C$ all having minimal singularities $V_{\theta} -C \leq u_j^C\leq 0$, by \cite[Theorem 4.26]{GZ17} we have that $e^{\beta u_j} \theta_{u_j^C}^n$ converges weakly to $e^{\beta u} \theta_{u^C}^n$ in $\Omega$, the ample locus of $\{\theta\}$. Since $\{V_{\theta}=0\}$ is a compact subset of $\Omega$, the last inequality follows. 

Letting $\beta \to +\infty$ and noting that $e^{\beta u}$ decreases to $\mathbbm{1}_{\{u=0\}}$ we arrive at 
$$
\limsup_{j\to +\infty} \int_{\{u_j=0\}} \theta_{u_j}^n \leq \int_{\{u=0\}} \theta_{u^C}^n =\int_{\{u=0\}} \theta_{u}^n,
$$
where we used again Lemma \ref{lem: plurifine}. This finishes the proof of the first part. 

Assume now that $u_j = \mathcal{C}(u_j)$. In case $\int_X \theta_{u_j}^n > 0$ then we have $\mathcal C(u_j)=P[u_j]$, and by \cite[Theorem 3.8]{DDL2} we know that $\theta_{u_j}^n$ is supported on the contact set $\{u_j=0\}$. In case $\int_X \theta_{u_j}^n = 0$ this same fact is trivially true.

Observe that the inequality $\liminf_j \int_X \theta_{u_j}^n \geq\int_X \theta_{u}^n$ follows from \cite[Theorem 2.3]{DDL2}. From this and the previous step we obtain 

\begin{equation} \label{eq: MA dec maxi}
\int_X \theta_{u}^n \leq \liminf_j \int_X \theta_{u_j}^n \leq \limsup_j \int_X \theta_{u_j}^n = \limsup_j \int_{\{u_j=0\}} \theta_{u_j}^n \leq  \int_{\{u=0\}} \theta_u^n \leq \int_X \theta_{u}^n.
\end{equation}
The conclusion follows.
\end{proof}

\begin{coro}\label{coro: C stable increasing}
Assume that $u_j \in \PSH(X,\theta)$ and $u_j =\mathcal{C}(u_j)$. If  $u_j \searrow u$, then $u= \mathcal{C}(u)$. If $u_j$ converges in capacity to $u$ and $\int_X \theta_u^n>0$ then $u= \mathcal{C}(u)$. 
\end{coro}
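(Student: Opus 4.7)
The corollary consists of two claims. For the first, assuming $u_j \searrow u$ with $u_j = \mathcal{C}(u_j)$, I would simply invoke the monotonicity of the ceiling operator (Lemma \ref{lem: ceiling as limit}). Since $u \leq u_j$, monotonicity gives $\mathcal{C}(u) \leq \mathcal{C}(u_j) = u_j$; passing to the limit yields $\mathcal{C}(u) \leq u$. Combined with the trivial bound $\mathcal{C}(u) \geq u$ (valid since $u \leq 0$ places $u$ in the defining family $\mathcal{F}_u$), this gives $u = \mathcal{C}(u)$.

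For the second claim, suppose $u_j \to u$ in capacity with $\int_X \theta_u^n > 0$. Observe first that $u_j = \mathcal{C}(u_j)$ forces $\sup_X u_j = 0$ (since for large $j$ the mass $\int_X \theta_{u_j}^n$ is positive by lower semicontinuity, so $\mathcal{C}(u_j) = P[u_j]$ has non-pluripolar support on $\{u_j = 0\}$), and $\sup_X u = 0$ as well (were $u \leq -c$ globally, capacity convergence would force $\theta^n(\{u_j = 0\}) \to 0$, contradicting the uniform lower bound on $\int_X \theta_{u_j}^n$). Proposition \ref{prop: MA dec maxi} then gives that $\theta_u^n$ is concentrated on $\{u = 0\}$. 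Since $\int_X \theta_u^n > 0$, Proposition \ref{prop: C_operator_properties}(i) identifies $\mathcal{C}(u)$ with $P[u]$, so the goal becomes $u = P[u]$.

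The heart of the argument is to apply a relative form of the domination principle (as in \cite{DDL1, DDL2}) to the pair $u \leq P[u]$, which lie in the same relative full mass class $\mathcal{E}(X, \theta, P[u])$ by Theorem \ref{thm: DDL2_E_char}. For this, one verifies $\theta_{P[u]}^n(\{u < P[u]\}) = 0$. By plurifine locality on $\{u = 0\} \subseteq \{V_\theta = 0\}$, the concentration of $\theta_u^n$ on $\{u = 0\}$ gives $\int_{\{u = 0\}} \theta_{V_\theta}^n = \int_X \theta_u^n$. Since $P[u]$ is model, its non-pluripolar product satisfies $\theta_{P[u]}^n = \mathbbm{1}_{\{P[u] = 0\}} \theta_{V_\theta}^n$, whose total mass equals $\int_X \theta_{P[u]}^n = \int_X \theta_u^n$. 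Combining,
\[
\int_{\{P[u] = 0\}} \theta_{V_\theta}^n = \int_{\{u = 0\}} \theta_{V_\theta}^n.
\]
Since $\{u = 0\} \subseteq \{P[u] = 0\}$ (from $u \leq P[u] \leq 0$), this forces $\theta_{V_\theta}^n(\{P[u] = 0\} \setminus \{u = 0\}) = 0$. As $\{u < P[u]\} \cap \{P[u] = 0\} \subseteq \{P[u] = 0\} \setminus \{u = 0\}$, we conclude $\theta_{P[u]}^n(\{u < P[u]\}) = 0$, and the relative domination principle yields $u = P[u] = \mathcal{C}(u)$.

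The main obstacle is pinning down the precise form of the relative domination principle needed (for potentials in a common relative full mass class, without a finite energy hypothesis) and making the justification of $\sup_X u = 0$ fully rigorous via capacity estimates; both should be drawn from the earlier works \cite{DDL1, DDL2}.
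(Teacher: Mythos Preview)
Your argument for the decreasing case matches the paper exactly.

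For the capacity-convergence case, you correctly identify the key input from Proposition~\ref{prop: MA dec maxi} (that $\theta_u^n$ is concentrated on $\{u=0\}$) and correctly reduce the problem to showing $u = P[u]$. But from that point you take an unnecessary detour. You try to show $\theta_{P[u]}^n(\{u < P[u]\}) = 0$ by passing through $\theta_{V_\theta}^n$, invoking the identity $\theta_{P[u]}^n = \mathbbm{1}_{\{P[u]=0\}}\theta_{V_\theta}^n$ and the equality $\int_{\{u=0\}}\theta_{V_\theta}^n = \int_X \theta_u^n$. Neither of these is available as stated: the paper only records $\theta_{P[u]}^n \leq \mathbbm{1}_{\{P[u]=0\}}\theta^n$, and your ``plurifine locality'' justification for the second equality fails because $\{u=0\}$ is not plurifine open. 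These gaps are not fatal, but filling them is work you do not need to do.

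The paper's route is much shorter: since $\theta_u^n$ is supported on $\{u=0\}$ and $u \leq P[u] \leq 0$ forces $\{u=0\} = \{u = P[u] = 0\}$, one has $u \geq P[u]$ $\theta_u^n$-a.e. immediately. The domination principle \cite[Proposition 3.11]{DDL2} (applied with respect to $\theta_u^n$, not $\theta_{P[u]}^n$) then gives $u \geq P[u]$, hence $u = P[u] = \mathcal{C}(u)$. No comparison with $\theta_{V_\theta}^n$ is needed. Your care about verifying $\sup_X u_j = \sup_X u = 0$ before invoking Proposition~\ref{prop: MA dec maxi} is well placed, though; the paper passes over this silently.
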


\begin{proof}
If $u_j \searrow u$ then $u \leq \mathcal C(u)\leq \mathcal C(u_j)=u_j$, hence $u= \mathcal{C}(u)$. Assume now that  $u_j$ converges in capacity to $u$ and $\int_X \theta_u^n>0$. It follows from \eqref{eq: MA dec maxi} that $\int_{\{u=0\}} \theta_u^n =\int_X \theta_u^n$. Hence $\theta_u^n$ is supported on $\{u=0\}=\{u=P[u]=0\}$, where we used that $u \leq P[u] \leq 0$. In particular, $u \geq P[u]$ a.e. with respect to $\theta_u^n$.
Since $\int_X \theta_{P[u]}^n=\int_X \theta_u^n >0$ we can use the domination principle, \cite[Proposition 3.11]{DDL2}, and Proposition \ref{prop: C_operator_properties}(i) to conclude that $u=P[u]=\mathcal{C}(u)$. 
\end{proof}

In the presence of non-vanishing mass,  we can also show the convergence of the mixed masses of decreasing model potentials:
\begin{prop}\label{prop: mixed MA dec maxi} Let $\delta>0$, \ $u_j,u \in \textup{PSH}(X,\theta)$ such that $\mathcal C(u_j)=u_j$ and $u_j\searrow u$. If $\int_X \theta_{u_j}^n\geq \delta$ then
\[
\lim_{j\to +\infty}\int_X \theta_{u_j}^m \wedge \theta_{V_\theta}^{n-m} = \int_X \theta_{u}^m\wedge  \theta_{V_\theta}^{n-m}, \  \ m\in \{0,...,n\}. 
\]
\end{prop}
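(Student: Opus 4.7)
My approach is a polynomial (Bernstein) reduction to the mass-convergence case $m=n$ already handled in Proposition \ref{prop: MA dec maxi}. For $\lambda\in[0,1]$, set $w_j^\lambda:=(1-\lambda)u_j+\lambda V_\theta$ and $w^\lambda:=(1-\lambda)u+\lambda V_\theta$, and define
\[
h_j(\lambda):=\int_X\theta_{w_j^\lambda}^n,\qquad h(\lambda):=\int_X\theta_{w^\lambda}^n.
\]
Because $\theta_{w_j^\lambda}=(1-\lambda)\theta_{u_j}+\lambda\theta_{V_\theta}$ is a convex combination of positive currents, the multilinearity of the non-pluripolar product yields
\[
h_j(\lambda)=\sum_{m=0}^n\binom{n}{m}(1-\lambda)^m\lambda^{n-m}\int_X\theta_{u_j}^m\wedge\theta_{V_\theta}^{n-m},
\]
a polynomial in $\lambda$ of degree $\leq n$ whose Bernstein coefficients are exactly the mixed masses that we want to control. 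Since the Bernstein basis is a basis for polynomials of degree $\leq n$, it suffices to show $h_j(\lambda)\to h(\lambda)$ at $n+1$ distinct values of $\lambda$: the associated Vandermonde-type linear system then inverts to give $\int_X\theta_{u_j}^m\wedge\theta_{V_\theta}^{n-m}\to\int_X\theta_u^m\wedge\theta_{V_\theta}^{n-m}$ for every $m$. Two such values are free: $\lambda=0$ is Proposition \ref{prop: MA dec maxi}, and $\lambda=1$ is trivial.

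Fix $\lambda\in(0,1)$ and introduce $\phi_j:=\mathcal C(w_j^\lambda)$. Since $\int_X\theta_{w_j^\lambda}^n\geq\lambda^n\int_X\theta_{V_\theta}^n>0$, Proposition \ref{prop: C_operator_properties}(i) gives $\phi_j=P[w_j^\lambda]$, so each $\phi_j$ is model with $\int_X\theta_{\phi_j}^n=h_j(\lambda)$. As $w_j^\lambda\searrow w^\lambda$, monotonicity of $\mathcal C$ (Lemma \ref{lem: ceiling as limit}) makes $\phi_j$ decreasing in $j$; let $\tilde v:=\lim_j\phi_j$. Corollary \ref{coro: C stable increasing} identifies $\tilde v$ as model, and then Proposition \ref{prop: MA dec maxi} applied to $\phi_j\searrow\tilde v$ delivers $h_j(\lambda)\to\int_X\theta_{\tilde v}^n$.

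It then remains to identify $\tilde v=\mathcal C(w^\lambda)$, for $\int_X\theta_{\mathcal C(w^\lambda)}^n=h(\lambda)$. One inequality, $\tilde v\geq\mathcal C(w^\lambda)$, is immediate from monotonicity of $\mathcal C$. Since both potentials are model with positive mass, the reverse would follow from the singularity comparison $[\tilde v]\leq[w^\lambda]$, for this would force $\tilde v=P[\tilde v]\leq P[w^\lambda]=\mathcal C(w^\lambda)$. This comparison is the main obstacle I anticipate, as it is a continuity statement for the envelope $\mathcal C(\cdot)$ under decreasing sequences of potentials with uniform positive mass. To attack it, I would exploit the non-collapsing bound $\int_X\theta_{w^\lambda}^n\geq\lambda^n\delta>0$ to fix $b>1$ with $b^n\int_X\theta_{\mathcal C(w^\lambda)}^n>(b^n-1)\int_X\theta_{\tilde v}^n$, invoke the subextension Lemma \ref{lem: subextension} to obtain $\xi_b:=P(b\,\mathcal C(w^\lambda)+(1-b)\tilde v)\in\PSH(X,\theta)$, observe that $\xi_b\leq\mathcal C(w^\lambda)$ (since $1-b<0$ and $\tilde v\geq\mathcal C(w^\lambda)$) and that $\theta_{\xi_b}^n\leq b^n\theta_{\mathcal C(w^\lambda)}^n$ by Lemma \ref{lem: basic MA inequality}, and combine the bound on the singularity type of $\xi_b$ (as revealed by the proof of Lemma \ref{lem: subextension}, in which the analogous envelope arises as a decreasing limit of potentials with the singularity of the upper argument) with a careful $b\to 1^+$ limit to pin down $[\tilde v]\leq[w^\lambda]$.

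Granting this identification, $h_j(\lambda)\to h(\lambda)$ for every $\lambda\in(0,1)$; choosing any $n-1$ additional distinct values in $(0,1)$ and inverting the resulting $(n+1)\times(n+1)$ Bernstein evaluation matrix yields convergence of each mixed non-pluripolar mass, concluding the proof.
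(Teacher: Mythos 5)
Your Bernstein reduction is an attractive idea, but the step you yourself flag as the ``main obstacle'' is in fact a genuine gap, and the attack plan you sketch does not close it. Let me explain why, and then contrast with the paper's route.

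The crux is to show $\tilde v=\mathcal{C}(w^\lambda)$, where $\tilde v=\lim_j\mathcal{C}(w_j^\lambda)$ and $w^\lambda=(1-\lambda)u+\lambda V_\theta$. You propose to fix $b>1$ with $b^n\int_X\theta_{\mathcal{C}(w^\lambda)}^n>(b^n-1)\int_X\theta_{\tilde v}^n$, apply Lemma~\ref{lem: subextension} to $\mathcal{C}(w^\lambda)\le\tilde v$, and push $b$ somewhere. Write $m:=\int_X\theta_{\mathcal{C}(w^\lambda)}^n$ and $M:=\int_X\theta_{\tilde v}^n$, so $0<m\le M$ and the admissibility constraint reads $b^n<M/(M-m)$ (when $m<M$); in particular $b$ is \emph{bounded}. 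The best one can extract from $\xi_b\le b\,\mathcal{C}(w^\lambda)+(1-b)\tilde v$ together with multilinearity is $\tfrac1b\xi_b+(1-\tfrac1b)\tilde v\le\mathcal{C}(w^\lambda)$, hence $m\ge(1-\tfrac1b)^nM$, i.e.\ $r:=m/M\ge(1-\tfrac1b)^n$. Optimizing $b$ up to the constraint gives only $r\ge\bigl(1-(1-r)^{1/n}\bigr)^n$, and since $(1-r)^{1/n}>1-r$ for $r\in(0,1)$ this inequality is \emph{automatically true} for every $r\in(0,1]$ and yields no contradiction when $m<M$. Similarly, $\theta_{\xi_b}^n\le b^n\theta_{\mathcal{C}(w^\lambda)}^n$ from Lemma~\ref{lem: basic MA inequality} gives $\int_X\theta_{\xi_b}^n\le b^nm$, which again cannot be converted into $M\le m$ without knowing something about $[\xi_b]$; and the claim that $\xi_b$ inherits the singularity of the upper argument $\tilde v$ is unwarranted (in Lemma~\ref{lem: subextension}'s proof the approximants $\varphi_j$ have singularity type $[v]$, but their decreasing limit $P(bu+(1-b)v)$ is generally strictly more singular). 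So the identification $\tilde v=\mathcal{C}(w^\lambda)$ --- equivalently, continuity of $\mathcal{C}$ (or $P[\cdot]$) along decreasing sequences with uniform positive mass, for potentials $w_j^\lambda$ that are \emph{not} model --- remains unproved. This is not a technicality: by multilinearity, $h_j(\lambda)\to h(\lambda)$ at $n+1$ nodes is precisely a Bernstein-basis repackaging of the full statement, so the difficulty concentrates exactly here.

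The structural reason your plan stalls is instructive. In the paper's proof the subextension lemma is applied directly to $u\le u_j$, and the constant is allowed to diverge: Proposition~\ref{prop: MA dec maxi} (the $m=n$ case, for model $u_j$) gives $\int_X\theta_{u_j}^n\to\int_X\theta_u^n\ge\delta$, so one may choose $b_j\to\infty$ with $b_j^n\int_X\theta_u^n\ge(b_j^n-1)\int_X\theta_{u_j}^n$. Lemma~\ref{lem: subextension} then furnishes $v_j:=P(b_ju+(1-b_j)u_j)\in\PSH(X,\theta)$, and from $\tfrac1{b_j}v_j+(1-\tfrac1{b_j})u_j\le u$ one gets, by mixed-mass monotonicity and multilinearity,
\[
\int_X\theta_u^m\wedge\theta_{V_\theta}^{n-m}\ \ge\ \bigl(1-\tfrac1{b_j}\bigr)^m\int_X\theta_{u_j}^m\wedge\theta_{V_\theta}^{n-m},
\]
and letting $b_j\to\infty$ closes the loop since the reverse inequality is automatic from $u_j\ge u$. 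In your framework the two competitors $\mathcal{C}(w^\lambda)$ and $\tilde v$ are fixed, so $b$ is capped and the same squeeze is unavailable. If you want to save the Bernstein strategy, you would need an independent proof that $\int_X\theta_{w_j^\lambda}^n\to\int_X\theta_{w^\lambda}^n$ for the non-model potentials $w_j^\lambda$; but this is exactly as hard as the original claim, so you should instead run the paper's $b_j\to\infty$ subextension argument directly on $u\le u_j$.
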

\begin{proof} Let $\{b_j\}_j$, $b_j>1$, be an increasing sequence with $b_j \to +\infty$ such that
$b^n_j \int_X \theta_{u}^n \geq (b^n_j - 1)\int_X 
\theta_{u_j}^n.$ 
Such a sequence exists due to the fact that $\int_X \theta_{u_j}^n \to \int_X \theta_{u}^n \geq \delta$ (Proposition \ref{prop: MA dec maxi}). Lemma \ref{lem: subextension} gives that $v_j: = P(b_ju+(1-b_j)u_j) \in \textup{PSH}(X,\theta)$. Since $\frac{1}{b_j} v_j + \big(1 - \frac{1}{b_j}\big) u_j \leq u$ using \cite[Theorem 1.1]{DDL2} we obtain that 
$$\int_X \theta_{u_j}^m \wedge \theta_{V_\theta}^{n-m} \geq \int_X \theta_u^m \wedge \theta_{V_\theta}^{n-m} \geq \bigg(1 - \frac{1}{b_j}\bigg)^{m}\int_X \theta_{u_j}^m \wedge \theta_{V_\theta}^{n-m}.$$
Letting $j \to +\infty$ the result follows.
\end{proof}

Finally, we summarize the above findings in our main theorem:
\begin{theorem} Fix $\delta>0$. The pseudo metric space  $(\mathcal S_\delta(X,\theta),d_\mathcal S)$ is complete.
\end{theorem}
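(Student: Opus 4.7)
The plan is to take a $d_\mathcal{S}$-Cauchy sequence $\{[u_j]\} \subset \mathcal{S}_\delta(X,\theta)$, normalize representatives so that $u_j \leq 0$, and then appeal to Proposition~\ref{prop: Cauchy_monotone} to pass to an equivalent sequence of representatives $v_j$ that is pointwise decreasing. The workhorse will then be the ceiling operator applied to the $v_j$: set $w_j := \mathcal{C}(v_j)$. By Proposition~\ref{prop: C_operator_properties}(iii) each $w_j$ is a fixed point of $\mathcal{C}$, and by the monotonicity statement in Lemma~\ref{lem: ceiling as limit} the sequence $\{w_j\}$ inherits monotonicity from $\{v_j\}$, so $w_j$ decreases pointwise in $j$. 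Since $w_j \in \mathcal{F}_{v_j}$ we have $\int_X \theta_{w_j}^n = \int_X \theta_{v_j}^n$, and combining with $d_\mathcal{S}([u_j],[v_j]) \to 0$ and the $d_\mathcal{S}$-continuity of the total mass (Lemma~\ref{lem: mixed_MA_dC_conv}) gives $\int_X \theta_{w_j}^n \geq \delta/2$ for all large $j$.

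Next I would argue that $w_j \searrow w \in \mathrm{PSH}(X,\theta)$. Here the positive mass is essential: because $\int_X \theta_{w_j}^n > 0$, Proposition~\ref{prop: C_operator_properties}(i) gives $w_j = P[v_j]$, and the property $\theta_{P[v_j]}^n \leq \mathbbm{1}_{\{P[v_j]=0\}} \theta^n$ recalled after the definition of $P[\psi]$ forces the contact set $\{w_j = 0\}$ to carry positive mass, so $\sup_X w_j = 0$. With a uniform sup-bound, the standard $L^1$-compactness for $\theta$-psh functions prevents the decreasing limit from being identically $-\infty$, producing $w \in \mathrm{PSH}(X,\theta)$.

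With the limit $w$ in hand, I would apply Proposition~\ref{prop: mixed MA dec maxi} to the decreasing sequence of model potentials $w_j$ (which satisfies $\mathcal{C}(w_j) = w_j$ and $\int_X \theta_{w_j}^n \geq \delta/2$) to conclude that all the mixed masses $\int_X \theta_{w_j}^m \wedge \theta_{V_\theta}^{n-m}$ converge to $\int_X \theta_w^m \wedge \theta_{V_\theta}^{n-m}$ for each $m$. Since $[w] \leq [w_j]$, Lemma~\ref{lem: monotone_lim_S} then yields $d_\mathcal{S}([w_j],[w]) \to 0$. Because $\mathcal{C}(v_j) = w_j = \mathcal{C}(w_j)$, Theorem~\ref{criteria d_1 vanishing} gives $d_\mathcal{S}([v_j],[w_j]) = 0$, so the triangle inequality combined with $d_\mathcal{S}([u_j],[v_j]) \to 0$ produces $d_\mathcal{S}([u_j],[w]) \to 0$. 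Finally the mass continuity of Lemma~\ref{lem: mixed_MA_dC_conv} upgrades $\int_X \theta_{u_j}^n \geq \delta$ to $\int_X \theta_w^n \geq \delta$, placing $[w]$ inside $\mathcal{S}_\delta(X,\theta)$.

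The principal obstacle is step two: ruling out that $w_j$ collapses to $-\infty$. A decreasing $d_\mathcal{S}$-Cauchy sequence can in general drift off to mass zero (this is the mechanism behind the incompleteness example of Section~\ref{sect: incompleteness}), and the only leverage that prevents this here is the uniform lower bound $\delta/2$ on the total masses combined with the fact that the model representatives $w_j = P[v_j]$ attain the value $0$. Once this sup-normalization is secured, the rest of the argument is essentially a direct consequence of the monotone convergence results already developed for mixed masses of model potentials with non-vanishing top mass.
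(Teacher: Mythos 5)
Your proposal is correct and follows the paper's own route: extract a decreasing equivalent sequence via Proposition~\ref{prop: Cauchy_monotone}, pass to $\mathcal{C}(v_j)$, invoke Proposition~\ref{prop: mixed MA dec maxi} together with Lemma~\ref{lem: monotone_lim_S} to identify the $d_\mathcal{S}$-limit, and use mass continuity to keep the limit inside $\mathcal{S}_\delta$. You go slightly beyond the paper in explicitly justifying that the decreasing limit $w$ of the model potentials is a genuine $\theta$-psh function (via $\sup_X w_j = 0$ from the contact-set property of $\theta_{P[\psi]}^n$ together with $L^1$-compactness), a step the paper leaves implicit when applying Proposition~\ref{prop: mixed MA dec maxi}; your $\delta/2$ bound is harmless since $v_j \geq u_j$ would even give the full $\delta$.
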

\begin{proof}Let $[u_j] \in \mathcal S_\delta(X,\theta)$ be a $d_\mathcal S$-Cauchy sequence. By \cite[Theorem 1.1]{DDL2}, the decreasing $d_\mathcal S$-Cauchy sequence $[v_j]$ that is equivalent to $[u_j]$   (constructed in Proposition \ref{prop: Cauchy_monotone}) also satisfies $v_j\geq u_j$ and $\int_X \theta^n_{v_j} \geq \delta$, hence $[v_j] \in \mathcal S_\delta(X,\theta)$. Since $\dS([v_j],[\mathcal{C}(v_j)]) =0$, we can assume that $v_j=\mathcal C(v_j)$. Set $v = \lim_j \mathcal C(v_j)$. Proposition \ref{prop: mixed MA dec maxi} together with Lemma \ref{lem: monotone_lim_S} imply that $d_\mathcal S([v_j],[v]) \to 0$. By Proposition \ref{prop: MA dec maxi}, $[v] \in \mathcal S_\delta(X,\theta)$, finishing the proof.
\end{proof}

\subsection{Incompleteness of $\mathcal S(X,\theta)$} \label{sect: incompleteness}

In this short section we show that $(\mathcal S(X,\omega),d_\mathcal S)$ is in general not complete. In particular, completeness fails even in the K\"ahler case. We start with the following general lemma:

\begin{lemma}\label{lem: P(tu)_model} Let $h \in \textup{PSH}(X,\omega)$ with $h = \mathcal C(h)$. If for $t > 1$ we have that $P_\omega(th) \in \textup{PSH}(X,\omega)$ then $P_\omega(th)=\mathcal C(P_\omega(th))$.
\end{lemma}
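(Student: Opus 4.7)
The plan is to show that $\varphi := P_\omega(th)$ satisfies $\varphi = P[\varphi]$, which via Proposition \ref{prop: C_operator_properties}(i) will give $\mathcal C(\varphi) = \varphi$ in the non-collapsing regime. The key is to force the non-pluripolar Monge--Amp\`ere mass $\omega^n_\varphi$ to be supported on $\{\varphi = 0\}$, exploiting that $h$ is model.

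First, I would apply Lemma \ref{lem: basic MA inequality} (proved later in the paper) with $b = t$ and, in the notation of that lemma, $u = h$ and $v = 0$. The hypothesis $P_\omega(th) \in \textup{PSH}(X,\omega)$ ensures its applicability, and it yields both the pointwise bound $\omega^n_\varphi \le t^n \omega^n_h$ and the support statement $\omega^n_\varphi\bigl(\{\varphi < th\}\bigr) = 0$. Next, $h = \mathcal C(h)$ together with Proposition \ref{prop: C_operator_properties}(i) gives $h = P[h]$ whenever $\int_X \omega^n_h > 0$, and the characterization of envelopes of singularity type recalled in the excerpt (following \cite[Theorem 3.8]{DDL2}) then forces $\omega^n_h \le \mathbbm{1}_{\{h = 0\}} \omega^n$; the case $\int_X \omega^n_h = 0$ is automatic. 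Combining, $\omega^n_\varphi(\{h < 0\}) \le t^n \omega^n_h(\{h < 0\}) = 0$, so $\omega^n_\varphi$ concentrates on $\{\varphi = th\} \cap \{h = 0\}$; on this set $th = 0$, hence $\varphi = 0$, and thus $\omega^n_\varphi$ is supported on $\{\varphi = 0\}$.

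Assuming $\int_X \omega^n_\varphi > 0$, the proof concludes via the domination principle. Set $w := P[\varphi] \ge \varphi$; then $w \le 0$ and $\int_X \omega^n_w = \int_X \omega^n_\varphi > 0$ by \cite[Remark 2.5]{DDL2}. On the support $\{\varphi = 0\}$ of $\omega^n_\varphi$ one has $w \le 0 = \varphi$, so $w \le \varphi$ almost everywhere with respect to $\omega^n_\varphi$; the domination principle \cite[Proposition 3.11]{DDL2} promotes this to $w \le \varphi$ on all of $X$, giving $P[\varphi] = \varphi$, and Proposition \ref{prop: C_operator_properties}(i) yields the claim.

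The main obstacle I anticipate is the degenerate case $\int_X \omega^n_\varphi = 0$, where the domination principle is not directly applicable. My proposal is monotone approximation: for a sequence $s_j \nearrow t$ with $s_j \in (1, t)$ and $\int_X \omega^n_{P_\omega(s_j h)} > 0$, the positive-mass argument above applied with $s_j$ in place of $t$ yields $P_\omega(s_j h) = \mathcal C(P_\omega(s_j h))$; since $s \mapsto P_\omega(sh)$ is non-increasing and its decreasing limit as $s \nearrow t$ is $\omega$-psh and bounded above by $th$, hence equal to $P_\omega(th)$, Corollary \ref{coro: C stable increasing} transfers the model property to the limit. The edge case where $\int_X \omega^n_{P_\omega(sh)}$ already vanishes on some subinterval $[s_0, t]$ of $(1, t]$ would require iterating this scheme starting from the (zero-mass) model potential $P_\omega(s_0 h)$, or a direct analysis inside the relative full mass class $\mathcal F_\varphi$.
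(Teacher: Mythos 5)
Your proposal is correct but takes a genuinely different route from the paper's own proof, and the two are worth comparing. The paper argues directly at the level of envelopes: for $s\in(1,t)$, if $v\le 0$ is $\omega$-psh and more singular than $P_\omega(sh)$, then (using that $\omega$ is K\"ahler and $s>1$) $s^{-1}v$ is again $\omega$-psh, more singular than $h$, hence $s^{-1}v\le P[h]=h$ (via Proposition \ref{prop: ray_ceiling_prop}(iv)), so $v\le sh$ and therefore $v\le P_\omega(sh)$; this yields $P[P_\omega(sh)]=P_\omega(sh)$ with essentially no pluripotential machinery. Positivity of $\int_X\omega_{P_\omega(sh)}^n$ then follows from $\frac{s}{t}P_\omega(th)\le P_\omega(sh)$, and Proposition \ref{prop: C_operator_properties}(i) and Corollary \ref{coro: C stable increasing} finish as in your last step. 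Your route instead pushes everything through the Monge--Amp\`ere measure of $\varphi=P_\omega(th)$: Lemma \ref{lem: basic MA inequality} plus $\omega_h^n\le\mathbbm{1}_{\{h=0\}}\omega^n$ show $\omega_\varphi^n$ is carried by $\{\varphi=0\}$, and the domination principle upgrades this to $P[\varphi]=\varphi$. This is heavier (it leans on Lemma \ref{lem: basic MA inequality}, the structure of $\omega_{P[\cdot]}^n$, and the domination principle) but perfectly valid, and the positive-mass case is complete as written. Note also that $h=P[h]$ already follows unconditionally from Proposition \ref{prop: ray_ceiling_prop}(iv), so your detour through Proposition \ref{prop: C_operator_properties}(i) plus the zero-mass dichotomy for $h$ is unnecessary.

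The one genuine gap is in your handling of the degenerate case $\int_X\omega_\varphi^n=0$: you posit a sequence $s_j\nearrow t$ with $\int_X\omega_{P_\omega(s_jh)}^n>0$ but leave open the ``edge case'' that this mass might already vanish on a subinterval. This worry is in fact unfounded, and you should close it: for any $s\in(1,t)$ one has $\frac{s}{t}P_\omega(th)\le sh$ and $\frac{s}{t}P_\omega(th)\in\PSH(X,\omega)$, so $\frac{s}{t}P_\omega(th)\le P_\omega(sh)$; since $\omega$ is K\"ahler, $\omega_{\frac{s}{t}P_\omega(th)}\ge(1-\frac{s}{t})\omega$ gives $\int_X\omega_{\frac{s}{t}P_\omega(th)}^n>0$, and monotonicity of masses \cite{WN19} forces $\int_X\omega_{P_\omega(sh)}^n>0$ for \emph{every} $s\in(1,t)$. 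With this one-line observation inserted, your approximation argument is airtight and the proof is complete. (A cosmetic point: Lemma \ref{lem: basic MA inequality} appears earlier in the paper, not later, so invoking it introduces no circularity.)
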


\begin{proof} Fix  $1 <s < t$ and assume that $P(th) \in \PSH(X,\omega)$. Let $v\leq 0$ be a $\omega$-psh function more singular than $P(sh)$. Then $s^{-1} v$ is more singular than $h$. Since $s^{-1} v$ is $\omega$-psh and $h =\mathcal C(h)= P [h]$, it follows that $s^{-1}v \leq h$, hence $v \leq sh$. Now, since $v$ is $\omega$-psh it follows that $v \leq P(sh)$. Hence $P[P(sh)]= P(sh)$. Since $\frac{s}{t}P(th) \leq P(sh)$ and  $s \in (1,t)$, the mass of $P(sh)$ is positive. 
It follows that $P(sh)$ is a model potential (Proposition \ref{prop: C_operator_properties}). 

Lastly, $P(t h)$ is the decreasing limit of the model potentials $P(su),  s \in (1,t)$. 
It then follows from Corollary \ref{coro: C stable increasing} that $P(th)$ is also a model potential, i.e. $\mathcal C(P(t h)) = P(t h)$.
\end{proof}

For the rest of this subsection assume that $X$ is equal to the complex surface described in \cite[Example 1.7]{DPS94}: $X := \Bbb P(E)$ where $E$ is a rank $2$  vector bundle over an elliptic curve $\Gamma = \Bbb C / \Bbb Z + \tau \Bbb Z, \textup{Im } \tau>0$. By \cite{DPS94} the line bundle $N:=\mathcal O_E(1)$ is nef and the only positive current of $N$ is a current of integration along a curve $C$, i.e. $ c_1(N)=\{[C]\}$.

Naturally we have a projection map $\pi: \Bbb P(E) \to \Gamma$. Since elliptic curves are projective, there exists an ample line bundle $M\to \Gamma$. The bundle $L:=\pi^* M^k \otimes \mathcal O_E(1)$ is ample over $X=\Bbb P(E)$ for big enough $k>0$ \cite[Proposition 7.10, page 161]{Har77}.

Let $\eta>0$ be the curvature form of $L$ and $\theta$  be the curvature of $N$. By possibly replacing $L$ with its high powers we can assume that $\eta + \theta$ is a K\"ahler form. Then  $\omega := 2 \eta + \theta$ is a K\"ahler form as well on $X$. Due to existence of sections for high powers of $L$, there exists $u \in \PSH(X,\eta)$, $\sup_X u=0$, such that $\eta_u:=\eta +i \ddbar u = [D]$ for some smooth curve $D \subset X$. In particular $\int_X \eta_u\wedge \alpha=0$, for any K\"ahler form $\alpha$.

Set  $\phi := \mathcal C_\omega(u) \in \PSH(X,\omega)\leq 0$ and note that 
$$
(\omega + i \ddbar u)^2 =(\eta+\theta+\eta_u)^2= (\eta + \theta)^2.
$$
It then follows from \cite[Theorem 3.3]{DDL4} that $u-\phi$ is bounded.

We now set $\phi_t := P_{\omega}(t\phi)$, for $t\in [1,2]$. Clearly, $2u + V_{\theta}$ is $\omega$-psh and it is smaller than $2\phi$, hence $\phi_t \in \PSH(X,\omega)$  and it is  a model potential thanks to Lemma \ref{lem: P(tu)_model}.

We estimate the mixed mass of $\phi_t$ for each $t\in [1,2)$.   Let $\varphi_t$ be a smooth negative $((2-t)\eta + \theta)$-psh function. Then $\phi_t$ is less singular than $tu+ \varphi_t \in \PSH(X,\omega)$,  hence 
\begin{eqnarray*}
\int_X (\omega +i\ddbar \phi_t ) \wedge \omega &\geq  &\int_X (2\eta + \theta +i\ddbar  (tu+\varphi_t)  ) \wedge \omega\\
&\geq  & \int_X ((2-t)\eta + \theta ) \wedge  (\eta  + (\eta + \theta)) 
 \geq   \{\theta\}. \{\eta\}=\int_C \eta>0,
\end{eqnarray*}
where in the third inequality above we used the fact that $\{\eta +\theta\}$ is a K\"ahler class. 

Also, $\phi_2 = P_\omega(2\phi)$ is more singular than $2u$. 
The potential $P_\omega(2\phi) -2u$ is then bounded from above and it satisfies:
$$
\theta + i \ddbar (P_\omega(2\phi) -2u) = \theta +2\eta + i \ddbar P_\omega(2\phi) \geq 0 \textup{ on } X \setminus D,
$$
since $\eta + i\ddbar u =0$ in $X\setminus D$. Therefore $\phi_2-2u$ extends over $X$ as a $\theta$-psh function. Thus $\phi_2 -2u = V_{\theta} + C_1$, for some constant $C_1$. 
Since $\theta +i\partial \overline{\partial}  V_{\theta}=[C]$ is a current of integration, the following holds for the mixed mass of $\phi_2$:
$$
\int_X (\omega+i \ddbar  \phi_2) \wedge \omega= \int_X (2\eta_u+ \theta_{V_\theta})\wedge (2\eta+\theta)=0.   
$$
Hence $\phi_s \searrow \phi_2$ but  due to Lemma \ref{lem: monotone_lim_S} we have that $d_{\mathcal{S}}([\phi_s],[\phi_2]) \not \rightarrow 0$ as $s\to 2$.

Due to Lemma \ref{lem: d_s_monotone} and the fact that $\phi_s$ has positive mass for any $s\in[1,2)$, we have that $\{[\phi_s]\}_{s \in [1,2)}$ does form a $d_\mathcal S$-Cauchy sequence. By Theorem \ref{thm: R_complete} there exists $\{r_t\}_t \in \mathcal R(X,\theta)$ such that $d_1^c(\{r[\phi_s]_t\}_t,\{r_t\}_t) \to 0$. The construction of the geodesic ray $\{r_t\}$ is explicit: for each $t>0$, $r_t$ is the limit in $(\mathcal{E}^1,d_1)$ of $r[\phi_s]_t$ as $s\to 2$. In this case since $\{r[\phi_s]_t\}_t$ is $s$-decreasing we have that $r[\phi_s]_t$ decreases to $r_t$ as $s\to 2$.  If $\mathcal S(X,\theta)$ is indeed complete, then $r_t = r[\psi]_t$ for some $[\psi] \in \mathcal S(X,\theta)$. But then we must have that  $r[\phi_s]_t \geq r_t = r[\psi]_t, \ t \geq 0$.  Letting $t \to \infty$, since $\mathcal C(\phi_s)=\phi_s$ (Lemma \ref{lem: P(tu)_model}), Proposition \ref{prop: ray_ceiling_prop}(i) implies that $\phi_s \geq \mathcal C(\psi) \geq \psi$ for any $s \in [1,2)$. In particular $\phi_s \geq \phi_2 \geq \mathcal C(\psi) \geq \psi$. As a result, Lemma \ref{lem: d_s_monotone} gives that $d_\mathcal S([\phi_s],[\phi_2]) \leq d_\mathcal S([\phi_s],[\psi]) \to 0$ as $s \to 2$, a contradiction with our above findings, hence $\mathcal S(X,\theta)$ is incomplete.

\section{The volume diamond inequality}\label{sect: volume diamond}

\begin{lemma}\label{lem: P(u,v)_mass_exist}
Assume that $u,v,w \in \textup{PSH}(X,\theta)$ are  such that $\int_X \theta_u^n +\int_X \theta_v^n>\int_X \theta_w^n$ and $\max(u,v) \leq w$. Then $P(u,v)\in \psh(X,\theta)$.
\end{lemma}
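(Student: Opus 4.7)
The approach will be to truncate $u$ and $v$ by $w-k$ to obtain a decreasing family of $\theta$-psh envelopes $\varphi_k := P(u_k,v_k)$ that all share the singularity type $[w]$, and then to derive a contradiction from the mass non-collapsing hypothesis if their pointwise limit were identically $-\infty$.

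First, I would make some reductions. The hypothesis forces $\int_X \theta_w^n>0$: since $u,v \preceq w$, the monotonicity of \cite{WN19} gives $\int_X \theta_u^n, \int_X \theta_v^n \leq \int_X \theta_w^n$, so a vanishing right-hand side would also force both left-hand masses to vanish, contradicting the strict sum inequality. I would next replace $w$ by $\mathcal{C}(w)=P[w]$ (this equality uses $\int_X\theta_w^n>0$), which dominates $w$, has the same mass (so the hypothesis persists), and is a model potential by Proposition \ref{prop: C_operator_properties}. A constant shift then places $u,v,w\leq 0$ with $\sup_X w=0$.

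Now set $u_k=\max(u,w-k)$, $v_k=\max(v,w-k)$, $\varphi_k:=P(u_k,v_k)$. Since $w-k$ is $\theta$-psh and bounded above by $\min(u_k,v_k)$, we obtain $w-k\leq \varphi_k\leq w$, hence $\varphi_k\in \textup{PSH}(X,\theta)$ with $[\varphi_k]=[w]$, and \cite{WN19} yields $\int_X\theta_{\varphi_k}^n=\int_X\theta_w^n>0$ for every $k$. The sequence $\varphi_k$ is decreasing; to finish, suppose for contradiction that $s_k:=\sup_X\varphi_k\to -\infty$ (equivalently $\varphi_k\searrow -\infty$). The model-type of $w$ together with Lemma \ref{lem: compactness and model type} give $\sup_X(\varphi_k-w)=s_k$, hence $\varphi_k\leq w+s_k$ pointwise. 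By balayage (as in the proof of Lemma \ref{lem: basic MA inequality}) the measure $\theta_{\varphi_k}^n$ is concentrated on $\{\varphi_k=u_k\}\cup\{\varphi_k=v_k\}$, and Lemma \ref{lem: concentration max} then yields
\[
\int_X\theta_{\varphi_k}^n \;\leq\; \int_{\{u_k\leq w+s_k\}}\theta_{u_k}^n+\int_{\{v_k\leq w+s_k\}}\theta_{v_k}^n.
\]
Using plurifine locality and the identity $u_k=\max(u,w-k)$, each integrand splits into $\theta_u^n$-mass on $\{w-k<u\leq w+s_k\}$ and $\theta_w^n$-mass on $\{u<w-k\}$. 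As $k\to\infty$ with $s_k\to -\infty$, both sets decrease to $\{u-w=-\infty\}\subseteq\{u=-\infty\}$, which is pluripolar and hence not charged by any non-pluripolar product; the symmetric argument applies to $v$. Consequently $\int_X\theta_{\varphi_k}^n\to 0$, contradicting the constancy $\int_X\theta_{\varphi_k}^n=\int_X\theta_w^n>0$.

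The main technical hurdle I expect is the careful handling of the plurifine boundary $\{u=w-k\}$ (and analogously for $v$), where the plurifine locality lemma does not directly pin down $\theta_{u_k}^n$; resolving this requires either a small perturbation of the truncation parameter (replacing $k$ by $k+\varepsilon$ for generic $\varepsilon$) or a Sard-type observation that the mass on these level sets is negligible for a dense set of parameters. The concentration of $\theta_{P(u_k,v_k)}^n$ on the contact set in this two-variable rooftop setting also needs to be reproduced, but this follows the same balayage scheme as Lemma \ref{lem: basic MA inequality}.
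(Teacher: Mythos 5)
Your overall strategy is the same as the paper's: truncate $u,v$ by $w-k$, form the rooftop envelopes $\varphi_k = P(u_k,v_k)$ with $[\varphi_k]=[w]$, invoke the concentration of $\theta_{\varphi_k}^n$ on the contact set (as in \cite[Lemma 3.7]{DDL2}), exploit the model property of $w$ via Lemma \ref{lem: compactness and model type}, and derive a contradiction by comparing total masses. However, there is a genuine error in the last step, and the ``Sard-type'' fix you propose for it does not work.

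You claim $\int_{\{u_k\le w+s_k\}}\theta_{u_k}^n\to 0$ by splitting into the $\theta_u^n$-mass on the annulus $\{w-k<u\le w+s_k\}$ and the $\theta_w^n$-mass on $\{u<w-k\}$, both of which vanish. That decomposition silently discards the mass of $\theta_{u_k}^n$ on the level set $\{u=w-k\}$, which is \emph{not} negligible. Indeed, by total mass conservation ($[u_k]=[w]$ and \cite{WN19}) and plurifine locality, $\int_{\{u_k\le w+s_k\}}\theta_{u_k}^n = \int_X\theta_w^n - \int_{\{u>w+s_k\}}\theta_u^n$, and since $\theta_u^n$ charges no pluripolar set, this converges to $\int_X\theta_w^n-\int_X\theta_u^n$, not to $0$. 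The mass on $\{u=w-k\}$ is the Bedford--Taylor mass of the cutoff concentrating onto the truncation level as $k\to\infty$ (think of $u=\log|z|^2$, $w=V_\theta$ on $\mathbb{P}^1$, where all of $\theta_{u_k}$ lives on the circle $\{u=-k\}$); perturbing $k$ to a generic value cannot remove it. A useful sanity check: your proposed limit $\int_X\theta_{\varphi_k}^n\to 0$ would make the hypothesis $\int_X\theta_u^n+\int_X\theta_v^n>\int_X\theta_w^n$ completely unused, which should have raised suspicion.

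The correct limit of your upper bound is $2\int_X\theta_w^n-\int_X\theta_u^n-\int_X\theta_v^n$, which by the hypothesis is strictly less than $\int_X\theta_w^n$; comparing this with $\int_X\theta_{\varphi_k}^n\equiv\int_X\theta_w^n$ already yields the contradiction. This is essentially what the paper does, except that it freezes the sublevel threshold at a fixed $s$ (chosen using the hypothesis so that $\int_{\{u>w-s\}}\theta_u^n+\int_{\{v>w-s\}}\theta_v^n>\int_X\theta_w^n$) and observes that for $j$ large enough, $\{h_j\le w-s\}=X$, rather than letting the threshold $s_k$ float to $-\infty$. With that repair the argument goes through and coincides with the paper's.
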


\begin{proof}   
We can assume without loss of generality that $u,v,w \leq 0$. Replacing $w$ with $P[\varepsilon V_\theta + (1-\varepsilon)w]$ for small enough $\varepsilon>0$ we can also assume that $\int_X \theta_w^n>0$ and $w= \mathcal{C}(w)$.

For $j \geq 0$ we set $u_j:=\max(u,w-j), v_j:=\max(v,w-j)$, $h_j:=P(u_j,v_j)$. Observe that $u_j, v_j, h_j$ have the same singularity type as $w$. We fix $s>0$ big enough, such that for all $j>s$, we have 
\[
\int_{\{u>w-s\}} \theta_{u_j}^n +\int_{\{v>w-s\}} \theta_{v_j}^n  = \int_{\{u>w-s\}} \theta_{u}^n +\int_{\{v>w-s\}} \theta_{v}^n >\int_X \theta_w^n, 
\]
where in the equality above we used Lemma \ref{lem: plurifine}.

It follows from  \cite[Lemma 3.7]{DDL2} and the above estimate that for $j>s$,
\begin{flalign*}
\int_{\{h_j\leq w-s\}} \theta_{h_j}^n \leq  \int_{\{u_j\leq w-s\}} \theta_{u_j}^n+\int_{\{v_j\leq w-s\}} \theta_{v_j}^n = 2 \int_X \theta_w^n -\int_{\{u>w-s\}} \theta_{u}^n -\int_{\{v>w-s\}} \theta_{v}^n 
<  \int_X \theta_w^n,
\end{flalign*}
where in the identity above we used the fact that $\{u_j\leq w-s\}= \{u\leq w-s\}$. Since $u_j,v_j$ decrease to $u,v$ respectively, it follows that $h_j \searrow P(u,v)$. We now rule out the possibility that $P(u,v)\equiv -\infty$. Indeed, suppose  $\sup_X h_j$ decreases to $-\infty$. From Lemma \ref{lem: compactness and model type} we obtain that  $\sup_X h_j = \sup_X (h_j - w) \searrow -\infty$. But then, for $j$ large enough the set $\{h_j\leq w-s\}$ coincides with $X$, contradicting our last integral estimate, since each $h_j$  has the same singularity type as $w$. 
\end{proof}

Plainly speaking, by the next lemma, the fixed point set of the map $\psi \to P[\psi]$ is stable under the operation $(\psi,\phi) \to P(\psi,\phi)$.

\begin{lemma}\label{lem: P_stab_fixed_point} Suppose $u_0,u_1 \in \textup{PSH}(X,\theta)$ are such that $P(u_0,u_1) \in \textup{PSH}(X,\theta)$, and $P[u_0]=u_0$ and $P[u_1]=u_1$. Then $P[P(u_0,u_1)]=P(u_0,u_1).$
\end{lemma}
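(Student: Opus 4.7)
The plan is to prove both inequalities $P[v] \geq v$ and $P[v] \leq v$ for $v := P(u_0,u_1)$, exploiting the monotonicity of the operator $P[\cdot]$ with respect to singularity type. First I would note the WLOG normalization $u_0, u_1 \leq 0 = V_\theta$, which follows since $u_i = P[u_i] \leq V_\theta$ by definition of the envelope. In particular $v = P(u_0,u_1) \leq \min(u_0,u_1) \leq V_\theta$.

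For the lower bound $v \leq P[v]$, I would observe that $v$ itself is a candidate in the family defining $P(v+C, V_\theta)$ for every $C \geq 0$: indeed $v$ is $\theta$-psh, $v \leq v + C$, and $v \leq V_\theta$. Hence $v \leq P_\theta(v+C, V_\theta)$; letting $C \to \infty$ gives $v \leq P[v]$ (the usc regularization does not affect the inequality since $v$ is already usc).

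For the upper bound $P[v] \leq v$, the key is the monotonicity principle: whenever $[\psi] \leq [\phi]$ one has $P[\psi] \leq P[\phi]$. This is immediate from the definition, since $\psi \leq \phi + C_0$ implies $P_\theta(\psi+C, V_\theta) \leq P_\theta(\phi + C + C_0, V_\theta)$, and one passes to the limit. Applying this with $\phi = u_i$, together with the assumption $P[u_i] = u_i$, yields $P[v] \leq u_i$ for $i = 0,1$, because $v \leq u_i$ gives $[v] \leq [u_i]$. Thus $P[v] \leq \min(u_0, u_1)$, and since $P[v]$ is $\theta$-psh, the defining property of the rooftop envelope forces $P[v] \leq P(u_0, u_1) = v$.

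I do not expect a serious obstacle here; the argument is essentially a one-line consequence of the monotonicity of $P[\cdot]$ under the order $[\,\cdot\,] \leq [\,\cdot\,]$ and the definition of the rooftop envelope $P(\cdot,\cdot)$. The only slightly delicate point to verify cleanly is that $v \leq P[v]$, where one must be careful that $v \leq V_\theta$ so that $v$ is a legitimate candidate for the envelopes $P_\theta(v+C, V_\theta)$ whose limit defines $P[v]$; this however is granted by the assumption $P[u_i]=u_i \leq V_\theta$.
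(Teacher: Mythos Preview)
Your proof is correct and follows essentially the same approach as the paper: both establish the chain $P(u_0,u_1) \leq P[P(u_0,u_1)] \leq P(P[u_0],P[u_1]) = P(u_0,u_1)$ via monotonicity of $P[\cdot]$ and the defining property of the rooftop envelope. One minor quibble: $V_\theta$ need not equal $0$ in the big (non-K\"ahler) setting, but the inequality you actually use, $u_i = P[u_i] \leq V_\theta$, is correct and suffices.
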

\begin{proof} As $P(u_0,u_1)\leq \min(u_0,u_1) \leq 0$ and $P[P(u_0,u_1)] \leq P[u_0], P[u_1]$, it follows that
$$P(u_0,u_1) \leq P[P(u_0,u_1)] \leq P(P[u_0],P[u_1])=P(u_0,u_1).
$$
This shows that all the inequalities above are in fact equalities.
\end{proof}

\begin{prop}\label{prop: envelope mixed} Let $\phi,\psi \in \textup{PSH}(X,\theta)$ be such that $\phi = P[\phi]$, $\psi = P[\psi]$, and $P(\phi,\psi) \in \PSH(X,\theta)$. If $u \in \mathcal E(X,\theta,\phi)$, $v \in \mathcal E(X,\theta,\psi)$ and $\int_X \theta_{P(\phi,\psi)}^n>0$ then $P(u,v) \in \mathcal E(X,\theta,P(\phi,\psi))$. 
\end{prop}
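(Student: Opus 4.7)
By Theorem \ref{thm: DDL2_E_char} (applied with base potential $P(\phi,\psi)$, which has positive mass by assumption) together with Lemma \ref{lem: P_stab_fixed_point} (giving $P[P(\phi,\psi)] = P(\phi,\psi)$), showing $P(u,v) \in \mathcal{E}(X,\theta, P(\phi,\psi))$ reduces to verifying three things: (a) $P(u,v) \in \textup{PSH}(X,\theta)$, (b) $P(u,v) \preceq P(\phi,\psi)$, and (c) $\int_X \theta_{P(u,v)}^n = \int_X \theta_{P(\phi,\psi)}^n$. The bound (b) will be an easy consequence of $u \leq \phi + C_u$ and $v \leq \psi + C_v$, since these will give $\min(u,v) \leq \min(\phi,\psi) + \max(C_u,C_v)$ and hence $P(u,v) \leq P(\phi,\psi) + \max(C_u,C_v)$.

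The heart of the argument is an approximation from above by potentials sharing the singularity types of $\phi,\psi$. Set $u_j := \max(u,\phi - j)$ and $v_j := \max(v,\psi - j)$. Then $u \preceq \phi$ forces $\phi - j \leq u_j \leq \phi + C_u$, so $[u_j] = [\phi]$ uniformly in $j$, and $u_j \searrow u$ (outside a pluripolar set); analogously for $v_j$. The key claim is that for each $j$, $P(u_j,v_j) \in \mathcal{E}(X,\theta, P(\phi,\psi))$. Indeed, $\min(u_j,v_j) \geq \min(\phi,\psi) - j \geq P(\phi,\psi) - j$, so $P(u_j,v_j) \geq P(\phi,\psi) - j$ is $\theta$-psh, and the uniform upper bound argued above for $(u,v)$ applies equally to $(u_j,v_j)$ giving $P(u_j,v_j) \leq P(\phi,\psi) + \max(C_u,C_v)$. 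Together these show $[P(u_j,v_j)] = [P(\phi,\psi)]$, so Witt-Nystr\"om monotonicity yields $\int_X \theta_{P(u_j,v_j)}^n = \int_X \theta_{P(\phi,\psi)}^n > 0$.

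The plan is then to take the decreasing limit $w := \lim_j P(u_j,v_j)$ and identify it with $P(u,v)$. The standard envelope comparison (using that $\min(u_j,v_j) \searrow \min(u,v)$ off a pluripolar set) shows $w = P(u,v)$ provided $w \not\equiv -\infty$. Once this is in hand, (a) follows, and the mass equality (c) will follow from a monotone convergence argument for the non-pluripolar Monge-Amp\`ere along the sequence of potentials $P(u_j,v_j) \in \mathcal{E}(X,\theta,P(\phi,\psi))$ with constant mass; concretely, I would apply Proposition \ref{prop: MA dec maxi} (or its mixed-mass variant Proposition \ref{prop: mixed MA dec maxi}) after observing that the ceiling $\mathcal{C}(P(u_j,v_j)) = P(\phi,\psi)$ is constant in $j$ (this identity follows from $[P(u_j,v_j)] = [P(\phi,\psi)]$ via Proposition \ref{prop: C_operator_properties}(i) and $P[P(\phi,\psi)] = P(\phi,\psi)$).

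\textbf{The main obstacle} is showing that the decreasing limit $w$ is not identically $-\infty$. The upper bound $P(u_j,v_j) \leq P(\phi,\psi) + \max(C_u,C_v)$ is uniform in $j$, but the lower bound $P(u_j,v_j) \geq P(\phi,\psi) - j$ degenerates. To control $\sup_X P(u_j,v_j)$ from below, I would exploit that $P(u_j,v_j) \in \textup{PSH}(X,\theta, P(\phi,\psi))$ together with Lemma \ref{lem: compactness and model type} applied to the fixed point $P(\phi,\psi) = P[P(\phi,\psi)]$: this gives $\sup_X(P(u_j,v_j) - P(\phi,\psi)) = \sup_X P(u_j,v_j)$, reducing the question to a mass-vs-sup estimate. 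Since the Monge-Amp\`ere masses $\int_X \theta_{P(u_j,v_j)}^n$ remain equal to the positive constant $\int_X \theta_{P(\phi,\psi)}^n$, a collapse $\sup_X P(u_j,v_j) \to -\infty$ would contradict lower semicontinuity of the non-pluripolar product (Theorem \ref{thm: lsc of non pluripolar product}) after normalization by the sup. Working out this compactness step carefully is what I expect to be the hardest part of the argument.
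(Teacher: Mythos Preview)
Your approximation $u_j=\max(u,\phi-j)$, $v_j=\max(v,\psi-j)$ is natural, but both the non--collapse step and step (c) contain genuine gaps as written, and the paper proceeds differently precisely to avoid them.

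\emph{Non--collapse.} Having $\int_X\theta_{P(u_j,v_j)}^n$ equal to a fixed positive constant does \emph{not}, by itself, rule out $\sup_X P(u_j,v_j)\to-\infty$: after normalising by the sup, Theorem~\ref{thm: lsc of non pluripolar product} only gives $\liminf_j\int_X\theta_{\tilde h_j}^n\geq\int_X\theta_{\tilde h}^n$, which is the wrong direction for a contradiction (and the $L^1$--subsequential limit coming from Lemma~\ref{lem: compactness and model type} does not give convergence in capacity anyway). A correct argument here must instead follow the mass--on--sublevel--set estimate of Lemma~\ref{lem: P(u,v)_mass_exist}: from $\theta_{P(u_j,v_j)}^n\leq\id_{\{\cdot=u_j\}}\theta_{u_j}^n+\id_{\{\cdot=v_j\}}\theta_{v_j}^n$ and plurifine locality one deduces $\int_{\{P(u_j,v_j)\leq P(\phi,\psi)-s\}}\theta_{P(u_j,v_j)}^n\leq R(s)$ with $R(s)\to0$ (using that $u,v$ have full relative mass), and \emph{that} contradicts collapse.

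\emph{Step (c).} Proposition~\ref{prop: MA dec maxi} (and Proposition~\ref{prop: mixed MA dec maxi}) require the decreasing sequence itself to consist of model potentials, i.e.\ $P(u_j,v_j)=\mathcal C(P(u_j,v_j))$. Your sequence only satisfies $\mathcal C(P(u_j,v_j))=P(\phi,\psi)$, which is not the same thing, so the proposition does not apply. Mass can genuinely drop along a decreasing sequence of non--model potentials with constant mass: $\max(w,V_\theta-j)\searrow w$ with $\int_X\theta^n_{\max(w,V_\theta-j)}\equiv\int_X\theta_{V_\theta}^n>\int_X\theta_w^n$ is the prototype. So your plan does not close the inequality $\int_X\theta_{P(u,v)}^n\geq\int_X\theta_{P(\phi,\psi)}^n$.

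The paper circumvents both issues via the subextension Lemma~\ref{lem: subextension}: from $u\in\mathcal E(X,\theta,\phi)$ one gets $u_b:=P_\theta(bu-(b-1)\phi)\in\PSH(X,\theta)$ for every $b>1$, and then the elementary bound $u\geq b^{-1}u_b+(1-b^{-1})\phi$ propagates through the rooftop to give $P(u,\psi)\geq b^{-1}P(u_b,\psi)+(1-b^{-1})P(\phi,\psi)$. Witt--Nystr\"om monotonicity and $b\to\infty$ then yield the mass lower bound \emph{directly}, with no limit of masses to control. The full statement is obtained by running this trick twice (first for $P(u,\psi)$, then for $P(u,v)$), with Lemma~\ref{lem: P(u,v)_mass_exist} supplying the intermediate existence of the envelopes.
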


\begin{proof} We can assume that $u\leq \phi$ and $v\leq \psi$. 

{\bf Step 1.}
We first prove that $P(u,\psi) \in \mathcal{E}(X,\theta,P(\phi,\psi))$. 
By assumption we  have $$\int_X \theta_u^n +\int_X \theta_{P(\phi,\psi)}^n >\int_X \theta_{\phi}^n,$$ and Lemma \ref{lem: P(u,v)_mass_exist} gives $P(u,\psi) = P(u,P(\phi,\psi)) \in \PSH(X,\theta)$. 
Fixing $b>1$, it follows from Lemma \ref{lem: subextension} that $u_b := P_{\theta}(bu -(b-1) \phi) \in \PSH(X,\theta)$. For $1<b<t$ we have 
$$
\phi\geq u_b \geq bt^{-1}u_t + (1-bt^{-1}) \phi. 
$$
Comparing the total mass via \cite{WN19} and letting $t\to +\infty$ we see that $u_b \in \mathcal E(X,\theta,\phi)$. The previous argument then gives $P(u_b,\psi) \in \PSH(X,\theta)$. On the other hand we also have 
$$
u \geq b^{-1} u_b + (1-b^{-1})\phi, 
$$
therefore $P(u,\psi) \geq b^{-1} P(u_b,\psi) + (1-b^{-1})P(\phi,\psi)$. Comparing the total mass via \cite{WN19} and letting $b \to +\infty$ we arrive at $\int_X \theta_{P(u,\psi)}^n \geq \int_X \theta_{P(\phi,\psi)}^n$, hence the conclusion.

{\bf Step 2. } We prove that $P(u,v) \in \PSH(X,\theta)$. 
It follows from \cite{WN19}, the assumption $v \in \mathcal{E}(X,\theta,\psi)$, and the first step that 
$$
\int_X \theta_{P(u,\psi)}^n + \int_X \theta_v^n= \int_X \theta_{P(\phi,\psi)}^n + \int_X \theta_{\psi}^n > \int_X \theta_{\psi}^n.
$$ 
Since $\max(P(u,\psi),v) \leq \psi$,  Lemma \ref{lem: P(u,v)_mass_exist} can be applied giving $P(u,v) = P(P(u,\psi),v) \in \PSH(X,\theta)$. 

{\bf Step 3.} We conclude the proof. Fixing $b>1$, it follows from Lemma \ref{lem: subextension} that  $v_b:= P_{\theta}(bv -(b-1)\psi) \in \PSH(X,\theta)$. For $1<b<t$ we have 
$$
\psi\geq v_b \geq bt^{-1}v_t + (1-bt^{-1}) \psi. 
$$
Comparing the total mass via \cite{WN19} and letting $t\to +\infty$ we see that $v_b \in \mathcal E(X,\theta,\psi)$. By the second step we have that $P(u,v_b) \in \PSH(X,\theta)$.  On the other hand we also have 
$$
v \geq b^{-1} v_b + (1-b^{-1})\psi,
$$
therefore $P(u,v) \geq b^{-1} P(u,v_b) + (1-b^{-1})P(u,\psi)$. Comparing the total mass via \cite{WN19} and letting $b \to +\infty$ we arrive at $\int_X \theta_{P(u,v)}^n \geq \int_X \theta_{P(u,\psi)}^n$. Combining this and the first step we arrive at the conclusion.
\end{proof}

Finally, we prove the main result of this section:

\begin{theorem}\label{thm: max and envelope}
Let $u,v\in \textup{PSH}(X,\theta)$ and assume that $P(u,v) \in \PSH(X,\theta)$. Then
\begin{equation}\label{eq: volume_diamond_ineq}
\int_X \theta_u^n + \int_X \theta_v^n \leq \int_X \theta_{\max(u,v)}^n +\int_X \theta^n_{P(u,v)}.
\end{equation} 
\end{theorem}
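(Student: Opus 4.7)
The plan is to pursue a quantitative refinement of the construction used in the proof of Lemma \ref{lem: P(u,v)_mass_exist}.

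First I would dispose of the trivial case: if $\int_X \theta_u^n + \int_X \theta_v^n \leq \int_X \theta_{\max(u,v)}^n$, then the conclusion holds automatically since $\int_X \theta_{P(u,v)}^n \geq 0$. So I may assume the strict reverse inequality, which is exactly the hypothesis that caused Lemma \ref{lem: P(u,v)_mass_exist} (applied with $w = \max(u,v)$) to produce $P(u,v) \in \textup{PSH}(X,\theta)$ in the first place, consistent with our running assumption.

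Next, I would follow the setup of the proof of Lemma \ref{lem: P(u,v)_mass_exist}. Replace $\max(u,v)$ by $w := \mathcal C(\max(u,v))$, which by Proposition \ref{prop: C_operator_properties} is a model potential carrying the same non-pluripolar mass as $\max(u,v)$. Define the cut-offs $u_j := \max(u, w-j)$, $v_j := \max(v, w-j)$ and their joint envelope $h_j := P(u_j, v_j)$; these all inherit the singularity type of $w$. Using the key inequality
\begin{equation*}
\int_{\{h_j \leq w-s\}} \theta_{h_j}^n \leq \int_{\{u_j \leq w-s\}} \theta_{u_j}^n + \int_{\{v_j \leq w-s\}} \theta_{v_j}^n
\end{equation*}
from \cite[Lemma 3.7]{DDL2}, combined with plurifine locality and the identity $\int_X \theta_{u_j}^n = \int_X \theta_{v_j}^n = \int_X \theta_w^n$, I would derive, for all $j > s$ sufficiently large, the refined mass estimate
\begin{equation*}
\int_X \theta_{h_j}^n - \int_{\{h_j \leq w-s\}} \theta_{h_j}^n \geq \int_{\{u>w-s\}} \theta_u^n + \int_{\{v>w-s\}} \theta_v^n - \int_X \theta_w^n.
\end{equation*}

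Finally---and this is the crucial step and main obstacle---I would pass to the limit $j \to \infty$ and $s \to \infty$ so that the left-hand side is identified with $\int_X \theta_{P(u,v)}^n$. Since $h_j \searrow P(u,v)$, the truncations $\max(h_j, w-s)$ decrease to $\max(P(u,v), w-s)$, a potential of minimal singularity, so by Bedford--Taylor continuity of the non-pluripolar product along decreasing sequences of minimal singularity and by plurifine locality on the set $\{h_j > w - s\} = \{\max(h_j, w-s) > w-s\}$, the limit can be computed. To extract $\int_X \theta_{P(u,v)}^n$ rather than the (potentially larger) mass $\int_X \theta_w^n$ carried by the $h_j$'s, one needs to bring in Proposition \ref{prop: envelope mixed}: applied with $\phi = P[u]$, $\psi = P[v]$ (when the $P(u,v)$-mass is positive), it furnishes the identification $\int_X \theta_{P(u,v)}^n = \int_X \theta_{P(\phi,\psi)}^n$ inside the relative full mass framework, which one can combine with the preceding estimate. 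The degenerate case $\int_X \theta_{P(u,v)}^n = 0$ can then be recovered by the perturbation $u \mapsto (1-\varepsilon)u + \varepsilon V_\theta$, $v \mapsto (1-\varepsilon)v + \varepsilon V_\theta$, which have strictly positive masses for $\varepsilon > 0$, followed by $\varepsilon \to 0$ using multilinearity of the non-pluripolar product. The hard part throughout is precisely the semi-continuous passage in the limit: naively the estimate above delivers only the weaker bound $\int_X \theta_u^n + \int_X \theta_v^n \leq 2\int_X \theta_{\max(u,v)}^n$, and the sharper bound requires carefully exploiting the mass-preserving envelope operations to replace $\int_X \theta_w^n$ by $\int_X \theta_{P(u,v)}^n$ in the limit.
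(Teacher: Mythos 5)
Your refined mass estimate is correct and extends the setup of Lemma~\ref{lem: P(u,v)_mass_exist} as intended, but the proof has a genuine gap at precisely the place you flag as ``the crucial step and main obstacle.'' The quantity you want to pass to the limit, $\int_{\{h_j>w-s\}}\theta_{h_j}^n$, has total mass $\int_X\theta_w^n$ for every $j$ (since $[h_j]=[w]$), and as $j\to\infty$ there is no mechanism in your argument forcing this mass to drain off $\{h_j>w-s\}$ down to $\int_X\theta_{P(u,v)}^n$. Bedford--Taylor convergence of $\theta_{\max(h_j,w-s)}^n$ along a decreasing sequence, combined with plurifine locality, gives weak convergence of measures, but you are integrating a $j$-dependent measure against a $j$-dependent plurifine indicator $\mathbbm{1}_{\{h_j>w-s\}}$, and no limsup-type inequality in the paper (in particular, not Theorem~\ref{thm: lsc of non pluripolar product}, which goes the wrong way) controls that double dependence. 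Your appeal to Proposition~\ref{prop: envelope mixed} does not fill this hole: that proposition identifies $\int_X\theta_{P(u,v)}^n$ with $\int_X\theta_{P(P[u],P[v])}^n$, which is useful for a reduction step, but it does not produce a semicontinuity bound on the specific integral $\int_{\{h_j>w-s\}}\theta_{h_j}^n$.

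The paper's argument is organized differently and avoids this difficulty. It first reduces to the case $u=P[u]$, $v=P[v]$; this is where Proposition~\ref{prop: envelope mixed} (and \cite{WN19}) actually enter, to justify that the four masses in \eqref{eq: volume_diamond_ineq} are unchanged under $u\mapsto P[u]$, $v\mapsto P[v]$. The payoff of that reduction is that, by \cite[Theorem~3.8]{DDL2}, $\theta_u^n$ and $\theta_v^n$ are supported on the contact sets $\{u=0\}$ and $\{v=0\}$. This lets the paper write, via Lemma~\ref{lem: plurifine}, the exact decomposition $\theta_{u_t}^n=\theta_u^n+\mathbbm{1}_{\{u\le w-t\}}\theta_{u_t}^n$ (and similarly for $v_t$), and then combine it with \cite[Lemma~3.7]{DDL2} to locate $\theta_{P(u_t,v_t)}^n$ on the disjoint union of $A_t:=\{u\le w-t\}\cup\{v\le w-t\}$ and the contact set $\{P(u_t,v_t)=0\}$. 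This produces the bound $\int_{\{P(u_t,v_t)=0\}}\theta_{P(u_t,v_t)}^n\ge \int_X\theta_u^n+\int_X\theta_v^n-\int_X\theta_w^n$ for every $t$, with the right-hand side independent of $t$. The passage to the limit is then handled by Proposition~\ref{prop: MA dec maxi}: since $P(u_t,v_t)\searrow P(u,v)$ with $\sup_X P(u_t,v_t)=\sup_X P(u,v)=0$, one gets $\limsup_t\int_{\{P(u_t,v_t)=0\}}\theta_{P(u_t,v_t)}^n\le\int_{\{P(u,v)=0\}}\theta_{P(u,v)}^n\le\int_X\theta_{P(u,v)}^n$. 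It is exactly this $\limsup$ bound, tied to the contact set $\{\cdot=0\}$ rather than to a superlevel set $\{\cdot>w-s\}$, that your argument is missing; the reduction to model potentials is what makes it applicable.
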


\begin{proof} 
It is enough to check \eqref{eq: volume_diamond_ineq} only in the case when $u = P[\phi]$ and $v = P[\psi]$ for some $\phi,\psi \in \textup{PSH}(X,\theta)$. Indeed, we first note that, for each $t>0$,  $\max(P(u+t,0),P(v+t,0))$ and  $\max(u,v)$ have the same singularity type. Since  $\max(P(u+t,0),P(v+t,0))$ increases a.e. to $\max(P[u],P[v])$, 
a direct application of \cite[Theorem 2.3, Remark 2.5]{DDL2} gives that 
$\int_X \theta_u^n = \int_X \theta_{P[u]}^n, \ \int_X \theta_v^n = \int_X \theta_{P[v]}^n \ \textup{ and } \ \int_X \theta_{\max(u,v)}^n = \int_X \theta_{\max(P[u],P[v])}^n.$ 
If $\theta_{P(P[u],P[v])}^n>0$ then  Proposition \ref{prop: envelope mixed} above ensures that  $\int_X \theta_{P(u,v)}^n = \int_X \theta_{P(P[u],P[v])}^n$, while in the zero mass case, the equality   follows from \cite{WN19}. 

For the rest of the argument we assume that $u=P[\phi]$ and $v = P[\psi]$, and for convenience we introduce $w := \max(u,v) \leq 0$. 
For $t>0$ we set  $u_t := \max(u,w-t)$ and $v_t := \max(v,w-t)$. Observe that, by \cite[Theorem 3.8]{DDL2},  $\theta_u^n$ is supported on $\{u=0\}=\{u_t=0\}$ which is contained in $\{u>w-t\}$, for $t>0$. From this  and   plurifine locality, Lemma \ref{lem: plurifine} we have  
\begin{equation}
    \label{eq: diamond c1}
\theta_{u_t}^n = \mathbbm{1}_{\{u>w-t\}} \theta_u^n + \mathbbm{1}_{\{u\leq w-t\}} \theta_{u_t}^n =  \mathbbm{1}_{\{u_t=0\}} \theta_u^n + \mathbbm{1}_{\{u\leq w-t\}} \theta_{u_t}^n= \theta_u^n +  \mathbbm{1}_{\{u\leq w-t\}} \theta_{u_t}^n.
\end{equation}
By the same argument applied for $v_t$ we have
\begin{equation}
    \label{eq: diamond c2}
\theta_{v_t}^n = \mathbbm{1}_{\{v>w-t\}} \theta_v^n + \mathbbm{1}_{\{v\leq w-t\}} \theta_{v_t}^n =  \mathbbm{1}_{\{v_t=0\}} \theta_v^n + \mathbbm{1}_{\{v\leq w-t\}} \theta_{v_t}^n= \theta_v^n +  \mathbbm{1}_{\{v\leq w-t\}} \theta_{v_t}^n. 
\end{equation}
Integrating over $X$ and noting that, by \cite{WN19}, $\int_X \theta_{u_t}^n =\int_X \theta_{v_t}^n=\int_X \theta_w^n$, we obtain 
\begin{equation}
    \label{eq: diamond c3}
    \int_X \theta_w^n -\int_X \theta_u^n = \int_{\{u\leq w-t\}} \theta_{u_t}^n, \ \ \int_X \theta_w^n -\int_X \theta_v^n = \int_{\{v\leq w-t\}} \theta_{v_t}^n, \ t>0. 
\end{equation}
Building on \eqref{eq: diamond c1}, \eqref{eq: diamond c2},  an application of  \cite[Lemma 3.7]{DDL2} gives 
\begin{flalign}
\theta_{P(u_t,v_t)}^n &\leq \mathbbm{1}_{\{P(u_t,v_t)=u_t\}}\theta_{u_t}^n + \mathbbm{1}_{\{P(u_t,v_t)=v_t\}}\theta_{v_t}^n \nonumber \\
&\leq \big(\mathbbm{1}_{\{P(u_t,v_t)=u_t=0\}} \theta_u^n+\mathbbm{1}_{\{P(u_t,v_t)=v_t=0\}} \theta_v^n\big)  + \mathbbm{1}_{\{u \leq w-t\}}\theta_{u_t}^n + \mathbbm{1}_{\{v \leq w-t\}}\theta_{v_t}^n. \label{eq: P_meas_est}
\end{flalign}
In particular, $\theta_{P(u_t,v_t)}^n$ is supported on the union of the disjoint sets  $A_t := \{u \leq w-t\} \cup \{v \leq w-t\}$ and $\{P(u_t,v_t)=0 \}$.  From here, since $P(u_t,v_t)$ has the same singularity type as $w$, an integration allows to conclude that:
\begin{flalign*}
\int_{\{P(u_t,v_t)=0\}} \theta_{P(u_t,v_t)}^n =  \int_X \theta_w ^n - \int_{A_t} \theta_{P(u_t,v_t)}^n 
\geq \int_X \theta_w ^n - \int_{\{u \leq w-t\}} \theta_{u_t}^n - \int_{\{v \leq w-t\}} \theta_{v_t}^n.
\end{flalign*}
where in the inequality  we have used  \eqref{eq: P_meas_est}. 
Now, using the above inequality, \eqref{eq: diamond c3}, and Proposition \ref{prop: MA dec maxi}  we let $t\to +\infty$ to get
$$\int_X \theta_{P(u,v)}^n \geq \int_{\{P(u,v)=0\}} \theta_{P(u,v)}^n\geq -\int_X \theta_w ^n + \int_X \theta_u^n +\int_X \theta_v^n,$$
finishing the proof. 
\end{proof}

\begin{remark}\label{rem: no equality in diamond}
If $\textup{dim }X=1$ then we have actually equality in \eqref{eq: volume_diamond_ineq}. Indeed, since $
\frac{\max(u,v)+P(u,v)}{2} \leq \frac{u+v}{2}$, an application of the main result of \cite{WN19}, yields the equality in \eqref{eq: volume_diamond_ineq}.
On the other hand, equality can not hold in general. Consider $X=\mathbb{C}\mathbb{P}^2$ with $\omega:=\omega_{FS}$ the Fubini Study metric and we view $(z_1,z_2)\in\mathbb{C}^2$ as a chart of $\mathbb{C}\mathbb{P}^2$. Let $\rho$ be the local potential of $\omega_{FS}$.  Set 
$$
u(z_1,z_2) : = \log (|z_1|^2+ |z_2|^2) - \rho \ ; \ v(z_1,z_2): = \log (|z_1|^2 +|z_2-1|^2) -\rho, \ w(z_1,z_2) := \log |z_1|^2 -\rho.
$$
Then  $w\leq P(u,v)$, hence $ P(u,v)$ is a $\omega_{FS}$-psh function and $\int_X \omega_u^2 =\int_X \omega_v^2=\int_X \omega_{P(u,v)}^2 =0$. On the other side, $\max(u,v)$ 
is bounded, hence $\int_X \omega_{\max(u,v)}^2 = \int_X \omega^2>0$. 
\end{remark}

As a consequence of \eqref{eq: volume_diamond_ineq} we show that every $d_\mathcal S$-convergent sequence in $\mathcal S_\delta(X,\theta)$ has a subsequence that can be sandwiched between an increasing and a decreasing $d_\mathcal S$-convergent sequence:

\begin{theorem}\label{thm: conv_subs_monotone} Let $\delta >0$, and suppose that $[u_j],[u] \in \mathcal S_\delta(X,\theta)$ satisfies  $d_\mathcal S([u_j],[u]) \to 0$ and $u_j = P[u_j]$, $u = P[u]$. Then there exist a subsequence $u_{j_k}$ and  decreasing/increasing sequences $v_{j_k},w_{j_k} \in \textup{PSH}(X,\theta)$ such that $u_{j_k} \leq v_{j_k} \searrow u$, $u_{j_k} \geq w_{j_k} \nearrow u$, and  $d_\mathcal S([v_{j_k}],[u]) \to 0$, $d_\mathcal S([w_{j_k}],[u]) \to 0$. 
\end{theorem}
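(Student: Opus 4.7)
The two sandwich sequences will be constructed separately. For the decreasing sequence I mimic the supremum construction of Proposition~\ref{prop: Cauchy_monotone} applied to the potentials $\max(u_{j_l},u)$. For the increasing sequence I use an infinitely iterated rooftop envelope $P_\theta(u,u_{j_k},u_{j_{k+1}},\ldots)$, whose non-triviality is secured by iterating the volume diamond inequality (Theorem~\ref{thm: max and envelope}), and whose identification with $u$ uses the mass-rigidity Theorem~\ref{thm: DDL2_E_char}. Passing to a subsequence, I assume $d_\mathcal S([u_{j_k}],[u]) \leq C^{-2k}$, where $C$ is the constant of Proposition~\ref{prop: poor_Pythagorean_S}; by that proposition the same geometric rate (up to a multiplicative constant) also holds for $\max(u_{j_k},u)$.

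\textbf{Decreasing sequence.} Set $\tilde u_l := \max(u_{j_l},u)$ and $v_{j_k} := \textup{usc}(\sup_{l \geq k}\tilde u_l)$. Then $v_{j_k}$ is decreasing in $k$ and satisfies $v_{j_k} \geq u_{j_k}, u$. The same telescoping estimate as in the proof of Proposition~\ref{prop: Cauchy_monotone}, carried out for the finite envelopes $v_{j_k}^{(m)} := \sup_{k \leq l \leq k+m}\tilde u_l$, yields a geometric bound $d_\mathcal S([\tilde u_k],[v_{j_k}^{(m)}]) \leq A_1 C^{-k}$ uniformly in $m$. Sending $m \to \infty$ via Lemma~\ref{lem: mon_limit_complete} delivers $d_\mathcal S([v_{j_k}],[u]) \to 0$. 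The decreasing pointwise limit $v_\infty := \lim_k v_{j_k} \geq u$ satisfies $d_\mathcal S([v_\infty],[u])=0$ by Lemma~\ref{lem: monotone_lim_S}, and Theorem~\ref{criteria d_1 vanishing} together with Proposition~\ref{prop: C_operator_properties}(i) give $\mathcal C(v_\infty)=\mathcal C(u)=u$. Since $v_\infty \leq 0$ lies in the defining family $\mathcal F_{v_\infty}$ of $\mathcal C(v_\infty)$, we conclude $v_\infty \leq u$, and therefore $v_\infty = u$.

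\textbf{Increasing sequence.} Inductively define $p_k^{(0)} := u$ and $p_k^{(m)} := P_\theta(p_k^{(m-1)},u_{j_{k+m-1}})$ for $m \geq 1$. For $k$ large, each application of Lemma~\ref{lem: P(u,v)_mass_exist} is justified: the required top-mass strict inequality follows from Lemma~\ref{lem: mixed_MA_dC_conv} and the bound $\max(p_k^{(m-1)},u_{j_{k+m-1}}) \leq \max(u,u_{j_{k+m-1}})$, together with $\int_X \theta_u^n \geq \delta > 0$. Lemma~\ref{lem: P_stab_fixed_point} makes each $p_k^{(m)}$ a model potential, and iterating the volume diamond yields the uniform bound
\[
\int_X \theta_{p_k^{(m)}}^n \geq \int_X \theta_u^n - A_2 C^{-2k}
\]
independent of $m$, with $A_2$ depending only on $n$ and $C$. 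Since each $p_k^{(m)}$ is a model potential with positive mass, \cite[Theorem~3.8]{DDL2} forces $\sup_X p_k^{(m)} = 0$; this uniform normalization gives uniform $L^1$-boundedness and guarantees that the pointwise decreasing limit $w_{j_k} := \lim_m p_k^{(m)}$ is a genuine $\theta$-psh function, moreover model by Corollary~\ref{coro: C stable increasing}. The inequality $p_k^{(m+1)} \leq p_{k+1}^{(m)}$ (the former carries one extra constraint) passes to the limit $m \to \infty$ and gives $w_{j_k} \leq w_{j_{k+1}}$; also $w_{j_k} \leq p_k^{(1)} \leq u_{j_k}$.

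\textbf{Identification of $w_\infty$ and main obstacle.} Let $w_\infty := \lim_k w_{j_k} \leq u$. Corollary~\ref{coro: C stable increasing} shows that $w_\infty$ is model, and the mass monotonicity of \cite[Theorem 1.2, Remark 2.5]{DDL2} combined with the uniform bound above yields $\int_X \theta_{w_\infty}^n = \lim_k \int_X \theta_{w_{j_k}}^n = \int_X \theta_u^n$. Thus $w_\infty \in \mathcal E(X,\theta,u)$, and Theorem~\ref{thm: DDL2_E_char} gives $P[w_\infty] = P[u] = u$; since $w_\infty = P[w_\infty]$ (model), we obtain $w_\infty = u$. Finally $d_\mathcal S([w_{j_k}],[u]) \to 0$ follows from Lemma~\ref{lem: mon_limit_complete}. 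The main technical challenge of the argument is the non-collapse of the infinitely iterated rooftop $\lim_m p_k^{(m)}$: once the model property of each $p_k^{(m)}$ combined with \cite[Theorem~3.8]{DDL2} supplies the uniform normalization $\sup_X p_k^{(m)} = 0$, the remainder of the proof reduces to repeated use of the volume diamond and of the rigidity Theorem~\ref{thm: DDL2_E_char}.
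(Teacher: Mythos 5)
Your proof is correct and follows essentially the same route as the paper's: pass to a geometrically $d_\mathcal S$-Cauchy subsequence, construct a decreasing sandwich via a supremum, construct an increasing sandwich via an iterated rooftop envelope controlled by the volume diamond inequality, and identify both limits with $u$ through mass rigidity of model potentials. The minor variation you introduce — including $u$ itself in both the supremum $\textup{usc}(\sup_{l\geq k}\max(u_{j_l},u))$ and the envelope $P(u,u_{j_k},u_{j_{k+1}},\ldots)$ — makes $u\leq v_{j_k}$ and $w_{j_k}\leq u$ automatic, which streamlines the identification step somewhat (the paper instead closes the sandwich only at the very end, after producing both sequences). Two small points to tighten: (1) passing the uniform lower mass bound from $p_k^{(m)}$ to its decreasing-in-$m$ limit $w_{j_k}$ requires mass convergence for decreasing model potentials, i.e.\ Proposition~\ref{prop: MA dec maxi}, which you use implicitly but should cite; (2) the assertion $d_\mathcal S([v_\infty],[u])=0$ follows from the sandwich $u\le v_\infty\le v_{j_k}$, mass monotonicity (\cite[Theorem 1.1]{DDL2}), Lemma~\ref{lem: mixed_MA_dC_conv}, and then Lemma~\ref{lem: d_s_monotone} — citing Lemma~\ref{lem: monotone_lim_S} alone slightly obscures the squeeze. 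Neither affects the validity of the argument.
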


As we will see below, for the appropriate subsequence $u_{j_k}$, the potentials $v_{j_k}$ and $w_{j_k}$ can be chosen as follows:

\begin{equation}\label{eq: v_w_j_k_formula}
w_{j_k}:= P(u_{j_k},u_{j_{k+1}},\ldots) \ \textup{ and } \ v_{j_k}:= \textup{usc} \Big(\sup_{l \geq k} u_{j_l}\Big).
\end{equation}

\begin{proof} We will pass to subsequences multiple times during the proof. Doing this, we can begin to assume that $d_\mathcal S([u_j],[u_{j+1}]) \leq \frac{1}{C^{2j}}$, where $C>1$ is the constant from Proposition \ref{prop: poor_Pythagorean_S}. To start, we introduce the following decreasing sequence:
$$v_j:= \textup{usc}\Big( \sup_{k \geq j} u_k\Big).$$ 
Trivially, $u_k \leq v_j$ for all $k \geq j$, and it follows from Proposition \ref{prop: Cauchy_monotone} and its proof (see \eqref{eq: v_j_u_j_C}) that $d_\mathcal S([u],[v_j]) \to 0$. 

Now we construct the sequence $w_j$. After possibly taking another subsequence, we can assume that $d_\mathcal S([u],[u_{j}])\leq \frac{1}{(n+1)C2^j}$ and $ d_\mathcal S([u],[v_{j}]) \leq \frac{1}{(n+1)C2^j}$. Lemma \ref{lem: mixed_MA_dC_conv} then implies
\begin{equation*}
\bigg|\int_X \theta_{u_j}^n - \int_X \theta_{u}^n \bigg| \leq \frac{1}{2^j} \ \ \textup{ and } \ \ \bigg|\int_X \theta_{v_j}^n - \int_X \theta_{u}^n \bigg| \leq \frac{1}{2^j},
\end{equation*}
hence
\begin{equation} \label{eq: sandwich 1}
    \bigg|\int_X \theta_{u_{j+k}}^n - \int_X \theta_{v_{j+k-1}}^n \bigg| \leq \frac{1}{2^{j+k}}+  \frac{1}{2^{j+k-1} }< \frac{1}{2^{j+k-2}}. 
\end{equation}

Fix $j_0>0$ large enough so that $2^{-j_0+3} < \delta$. We claim that, for all $j>j_0, k \geq 0$, we have 
$
P(u_j,...,u_{j+k}) \in \PSH(X,\theta),
$
and 
\begin{equation}\label{eq: P_estij}
\int_X \theta_{u_j}^n - \sum_{l=0}^k\frac{1}{2^{j+l-2}}\leq \int_X \theta^n_{P(u_j,\ldots, u_{j+k})} \leq \int_X \theta_{u_j}^n.
\end{equation}
We argue inductively assuming the claim for $k-1$, as the case $k=0$ is obvious. From \eqref{eq: sandwich 1},\eqref{eq: P_estij} we have 
\begin{eqnarray}\label{masses induction}
 \int_X \theta^n_{P(u_j,u_{j+1}, \ldots, u_{j+k-1})} +\int_X \theta_{u_{j+k}}^n   &>& \int_X \theta_{u_j}^n - \sum_{l=0}^{k-1}\frac{1}{2^{j+l-2}} + \int_X \theta_{v_{j+k-1}}^n - \frac{1}{2^{j+k-2}}\\
\nonumber &> & \int_X \theta_{u_j}^n - 2^{-j+3} +\int_X \theta_{v_{j+k-1}}^n > \int_X \theta_{v_{j+k-1}}^n,
\end{eqnarray}
where the last inequality follows from the choice of $j_0$. Since $P(u_j,u_{j+1}, \ldots u_{j+k-1}),  {u_{j+k}} \leq v_{j+k-1}$, it then follows from Lemma \ref{lem: P(u,v)_mass_exist} that $P(P(u_j,...,u_{j+k-1}), u_{j+k})=P(u_j,...,u_{j+k}) \in \PSH(X,\theta)$.  
We next apply Theorem \ref{thm: max and envelope} to obtain
\begin{flalign*}
\int_X \theta_{u_{j+k}}^n + \int_X \theta^n_{P(u_j,u_{j+1}, \ldots, u_{j+k-1})} & \leq \int_X \theta_{\max(P(u_j, \ldots, u_{j+k-1}), u_{j+k})}^n + \int_X \theta^n_{P(u_j,u_{j+1}, \ldots, u_{j+k})} \\
& \leq \int_X \theta_{v_{j+k-1}}^n + \int_X \theta^n_{P(u_j,u_{j+1}, \ldots, u_{j+k})}, 
\end{flalign*}
where  in the second inequality we used \cite[Theorem 1.1]{DDL2}. The claim follows from \eqref{masses induction} and the above inequality. 

Set $w_j^k:= P(u_j,\ldots, u_{j+k})$.  It follows from Lemma \ref{lem: P_stab_fixed_point} that $w_j^k=P[w_j^k]$, hence $\sup_X w_j^k=0$. Therefore, the decreasing limit $\lim_k w_j^k$ is a $\theta$-psh function (it is not identically $-\infty$). 
 Proposition \ref{prop: MA dec maxi} now gives that $\int_X \theta_{w_j^k}^n \to \int_X \theta_{w_j}^n$. Putting this together with \eqref{eq: P_estij} we obtain that  
$$0<\int_X \theta_{u_j}^n -\frac{1}{2^{j-3}}\leq \int_X \theta^n_{w_j} \leq \int_X \theta_{u_j}^n, \ j >j_0.$$
Moreover from Corollary \ref{coro: C stable increasing} together with Proposition \ref{prop: C_operator_properties}(i) we know that $\mathcal{C}(w_j)=P[w_j]= w_j$. 
Let $w \in \textup{PSH}(X,\theta)$ be the increasing limit of $\{w_j\}_j$, and  $v$ be the decreasing limit of $\mathcal{C}(v_j)=P[v_j]$. It follows from Corollary \ref{coro: C stable increasing} that $w=\mathcal{C}(w)$ and $v= \mathcal{C}(v)$.  Since $d_{\mathcal{S}}([v_j],[u]) \to 0$ and $d_{\mathcal{S}}([v_j],[v]) \to 0$ (where the last assertion follows from Proposition \ref{prop: mixed MA dec maxi}) we have that $d_{\mathcal{S}}([u],[v])=0$, hence $u=v$ since they are both model potentials (Theorem \ref{criteria d_1 vanishing}(iii)). 

 By monotonicity of $\{w_j\}_j$ and $\{v_j\}_j$ we see that $w_j \leq w \leq P[v_j], \ j \geq 0$. From here and the above integral estimate we have that $\int_X \theta_w^n = \int_X \theta_u^n$ and $w\leq u$, hence $u=w$ since they are both model potentials.  
 
 Finally, according to Lemma \ref{lem: mon_limit_complete},  $\lim_j d_\mathcal S([w_j],[u])=\lim_j d_\mathcal S([w_j],[w])=0$, finishing the proof.\end{proof}

\section{Semicontinuity of multiplier ideal sheaves} \label{sect: ideal sheaves}

For $[u] \in \mathcal S(X,\theta)$ we denote by $\mathcal J[u]$  the multiplier ideal sheaf associated to the singularity type $[u]$. Recall that $\mathcal J[u]$ is the sheaf of germs of holomorphic
functions $f$ such that $|f|^2 e^{-u}$ is locally integrable on $X$. We now give a version of a theorem of Guan and Zhou \cite{GZh15,GZh16} adapted to our context:

\begin{theorem}\label{thm: mult_ideal_semicont} Let $[u],[u_j] \in \mathcal S(X,\theta)$ be such that $d_\mathcal S([u_j],[u]) \to 0$.  Then there exists $j_0 \geq 0$ such that $\mathcal J[u] \subseteq \mathcal J[u_j]$ for all $j \geq j_0$.
\end{theorem}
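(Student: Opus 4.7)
The plan is to combine the sandwich theorem (Theorem~\ref{thm: conv_subs_monotone}) with the local strong openness theorem of Guan--Zhou \cite{GZh15,GZh16}. Since both $\mathcal J[\cdot]$ and $d_\mathcal S$ depend only on singularity type, and since $[P[v]]=[v]$ for any $v\in \textup{PSH}(X,\theta)$, I would first replace $u, u_j$ by their model envelopes $P[u], P[u_j]$, so we may assume $u=P[u]$ and $u_j=P[u_j]$.

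For the main case $\int_X \theta_u^n>0$, Lemma~\ref{lem: mixed_MA_dC_conv} gives $\int_X \theta_{u_j}^n \to \int_X \theta_u^n >0$, so $[u],[u_j]\in \mathcal S_\delta(X,\theta)$ for some $\delta>0$ and all $j$ sufficiently large. I would argue by contradiction: if the conclusion fails, then along some subsequence $\mathcal J[u] \not\subseteq \mathcal J[u_{j_k}]$. Applying Theorem~\ref{thm: conv_subs_monotone} to this subsequence extracts a further subsequence (still denoted $u_{j_k}$) and $\theta$-psh potentials $w_{j_k}\leq u_{j_k}$ with $w_{j_k}\nearrow u$ pointwise. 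The inequality $w_{j_k}\leq u_{j_k}$ yields $\mathcal J[w_{j_k}]\subseteq \mathcal J[u_{j_k}]$. On the other hand, in any local chart where $\theta = i\partial\bar\partial \rho$ for a smooth $\rho$, the sequence $\{w_{j_k}+\rho\}_k$ is an increasing sequence of psh functions converging pointwise to $u+\rho$; the local Guan--Zhou theorem then gives $\mathcal J[w_{j_k}]=\mathcal J[u]$ on every relatively compact open subset for $k$ large. By compactness of $X$ a single index $k_0$ works, so $\mathcal J[u]=\mathcal J[w_{j_k}]\subseteq \mathcal J[u_{j_k}]$, contradicting the assumption.

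The main obstacle is the degenerate case $\int_X \theta_u^n=0$, where Theorem~\ref{thm: conv_subs_monotone} does not apply. I would handle this by perturbation: set $\tilde u=(1-\varepsilon)u+\varepsilon V_\theta$ and $\tilde u_j=(1-\varepsilon)u_j+\varepsilon V_\theta$, which carry positive mass $\geq \varepsilon^n\int_X \theta_{V_\theta}^n>0$. Multilinearity of non-pluripolar products together with Lemma~\ref{lem: mixed_MA_dC_conv} should give $d_\mathcal S([\tilde u_j],[\tilde u])\to 0$, to which the previous case applies: $\mathcal J[\tilde u]\subseteq \mathcal J[\tilde u_j]$ for $j$ large. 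Restricted to the ample locus of $\{\theta\}$, $V_\theta$ is locally bounded, so $\mathcal J[\tilde u]$ coincides with $\mathcal J[(1-\varepsilon)u]$ there, and strong openness gives $\mathcal J[(1-\varepsilon)u]=\mathcal J[u]$ once $\varepsilon$ is below a uniform threshold; similarly for the $u_j$. Coherence of multiplier ideal sheaves (Nadel) then extends the inclusion across the non-ample analytic subset. Verifying the $d_\mathcal S$-convergence under the perturbation and the uniformity of the strong openness threshold along the sequence is the most technical aspect.
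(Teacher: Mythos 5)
Your proposal matches the paper exactly in the positive-mass case and in the preliminary reduction to model potentials $u=P[u]$, $u_j=P[u_j]$: argue by contradiction, extract the increasing sandwich $w_{j_k}\nearrow u$ from Theorem~\ref{thm: conv_subs_monotone}, and invoke Guan--Zhou locally. That part is correct.

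The degenerate case $\int_X\theta_u^n=0$, however, has a genuine gap. Your perturbation $\tilde u_j=(1-\varepsilon)u_j+\varepsilon V_\theta$ satisfies $\tilde u_j\ge u_j$ (since $V_\theta-u_j\ge 0$), so it is \emph{less} singular than $u_j$, and the free inclusion is $\mathcal J[u_j]\subseteq\mathcal J[\tilde u_j]$ --- the wrong direction. After establishing $\mathcal J[\tilde u]\subseteq\mathcal J[\tilde u_j]$ via the first case, you would need $\mathcal J[\tilde u_j]\subseteq\mathcal J[u_j]$, i.e.\ $\mathcal J[(1-\varepsilon)u_j]\subseteq\mathcal J[u_j]$, for all large $j$ with a \emph{single} $\varepsilon$. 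Strong openness gives such a threshold $\varepsilon_j$ for each fixed $u_j$, but nothing forces $\inf_j\varepsilon_j>0$; the thresholds can shrink to $0$ as the $u_j$ vary, and the convergence $d_\mathcal S([u_j],[u])\to 0$ gives no lower bound on them. Your phrase ``similarly for the $u_j$'' is precisely where the argument breaks, and this is not merely technical. (The passage from the ample locus back to all of $X$ is also an extra step your route must justify; the paper avoids it.)

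The paper instead perturbs in the more singular direction: it considers $u+\varepsilon V_\theta$ and $u_j+\varepsilon V_\theta$ as $(1+\varepsilon)\theta$-psh functions. Since $V_\theta\le 0$, one has $u_j+\varepsilon V_\theta\le u_j$, so $\mathcal J[u_j+\varepsilon V_\theta]\subseteq\mathcal J[u_j]$ automatically, with no openness needed on the varying side. Strong openness (via \cite[Theorem 0.8]{Dem15}) is applied \emph{only} to the single fixed potential $u$ to secure $\mathcal J[u]=\mathcal J[u+\varepsilon V_\theta]$ for $\varepsilon$ small. The positivity $\int_X(\theta_u+\varepsilon\theta_{V_\theta})^n\ge\varepsilon^n\int_X\theta_{V_\theta}^n>0$ gets you into the non-degenerate regime, and Lemma~\ref{lem: from omega to two omega} controls $d_{\mathcal S,(1+\varepsilon)\theta}$ in terms of $d_{\mathcal S,\theta}$, so the first-step argument applies in the class $\{(1+\varepsilon)\theta\}$. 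Replacing your $(1-\varepsilon)u_j+\varepsilon V_\theta$ by $u_j+\varepsilon V_\theta$ (and working in $\mathcal S(X,(1+\varepsilon)\theta)$) repairs the proof.
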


\begin{proof} We first assume that there exists $\delta>0$ such that $u_j,u \in \mathcal{S}_{\delta}(X,\theta)$,  for all $j\geq 0$.

We note that we can assume that $P[u_j]=u_j$ and $P[u]=u$. Indeed, since $P[u_j]$ is the increasing limit of the potentials $P(0,u_j + c)$ and $[P(0,u_j + c)] = [u_j]$ for any $c \in \Bbb R$, it follows from \cite{GZh15,GZh16} (see \cite[Theorem 0.8]{Dem15} for a survey) that $\mathcal J[u_j] = \mathcal J[P[u_j]]$. Similarly, $\mathcal J[u]= \mathcal J[P[u]]$.

By contradiction let us assume that $\mathcal J[u]$ is not a subsheaf of $\mathcal{J}[u_j]$ for big enough $j$. Then there exists a subsequence of $[u_j]$, again denoted by $[u_j]$, such that 
\begin{equation}\label{eq: J_non_inclusion}
\mathcal J[u] \not \subseteq \mathcal J[u_j], \ \ j\geq 0.
\end{equation}

After possibly taking another subsequence, via Theorem \ref{thm: conv_subs_monotone}, we can further assume that  
there exists $\{w_j\}_j \subset \textup{PSH}(X,\theta)$ increasing such that $w_j \leq u_j$ and $w_j  \nearrow u$.  
Using \cite[Theorem 0.8]{Dem15} again, it follows that $\mathcal J[u] = \mathcal J[w_j] \subseteq \mathcal J[u_j]$ for all $j$ greater than some fixed $j_0$. But this is a contradiction with our assumption \eqref{eq: J_non_inclusion}.

We now treat the general case. Using \cite[Theorem 0.8]{Dem15} we can find $\varepsilon>0$ small enough such that $\mathcal{J}[u]=\mathcal{J}[u+ \varepsilon V_{\theta}]$.   By Lemma \ref{lem: from omega to two omega} below, $d_{\mathcal{S},(1+\varepsilon)\theta}([u_j+\varepsilon V_{\theta}],[u+\varepsilon V_{\theta}]) \to 0$. Thus, by the first step we have that $\mathcal{J}[u]=\mathcal{J}[u+\varepsilon V_{\theta}] \subset \mathcal{J}[u_j+\varepsilon V_{\theta}]$, for $j\geq j_0$, where $j_0$ depends on $\varepsilon$. But $\mathcal{J}[u_j+\varepsilon V_{\theta}] \subset \mathcal{J}[u_j]$, hence the conclusion. 
\end{proof}

\begin{lemma}\label{lem: from omega to two omega}
	For $\varepsilon \in [0,1]$ there exists $C=C(n)>1$ such that for all $u,v \in \PSH(X,\theta)$ we have 
	$$
	\frac{1}{C}d_{\mathcal{S}, \theta} ([u], [v]) \leq d_{\mathcal{S},(1+\varepsilon)\theta} ([u+\varepsilon V_{\theta}],[v+\varepsilon V_{\theta}]) \leq   Cd_{\mathcal{S},\theta} ([u], [v]).
	$$
\end{lemma}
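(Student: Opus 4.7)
The plan is to express both pseudo-distances in terms of the mixed masses $A_l(w):=\int_X\theta_{V_\theta}^l\wedge\theta_w^{n-l}$ via the decomposition inequality of Proposition \ref{prop: poor_Pythagorean_S} together with Lemma \ref{lem: d_s_monotone}, and then compare the resulting sums by an explicit binomial expansion. First I would record three elementary identities. The equality $V_{(1+\varepsilon)\theta}=(1+\varepsilon)V_\theta$ follows because any $h\in\PSH(X,(1+\varepsilon)\theta)$ with $h\leq 0$ satisfies $h/(1+\varepsilon)\in\PSH(X,\theta)$ with $h/(1+\varepsilon)\leq 0$, so $h/(1+\varepsilon)\leq V_\theta$; conversely $(1+\varepsilon)V_\theta$ is a non-positive candidate. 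Directly from the definition one then has $((1+\varepsilon)\theta)_{u+\varepsilon V_\theta}=\theta_u+\varepsilon\theta_{V_\theta}$ for every $u\in\PSH(X,\theta)$, and $\max(u+\varepsilon V_\theta,v+\varepsilon V_\theta)=\max(u,v)+\varepsilon V_\theta$. In particular, $[u]\mapsto[u+\varepsilon V_\theta]$ is a well-defined map $\mathcal S(X,\theta)\to\mathcal S(X,(1+\varepsilon)\theta)$.

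Next, by multilinearity of the non-pluripolar product, for any $w\in\PSH(X,\theta)$ and $j\in\{0,\dots,n\}$,
\begin{equation*}
\int_X\bigl((1+\varepsilon)\theta_{V_\theta}\bigr)^j\wedge\bigl(\theta_w+\varepsilon\theta_{V_\theta}\bigr)^{n-j}=\sum_{l=j}^n(1+\varepsilon)^j\binom{n-j}{l-j}\varepsilon^{l-j}A_l(w).
\end{equation*}
Set $D_l(u,v):=2A_l(\max(u,v))-A_l(u)-A_l(v)$, which is non-negative by \cite{WN19}. Applying Proposition \ref{prop: poor_Pythagorean_S} combined with Lemma \ref{lem: d_s_monotone} on both cohomology classes, $d_{\mathcal S,\theta}([u],[v])$ is comparable up to constants depending only on $n$ to $\sum_{l=0}^n D_l(u,v)$, while $d_{\mathcal S,(1+\varepsilon)\theta}([u+\varepsilon V_\theta],[v+\varepsilon V_\theta])$ is comparable, after swapping the order of summation in $j$ and $l$, to
\begin{equation*}
\sum_{l=0}^n c_l(\varepsilon)\,D_l(u,v),\qquad c_l(\varepsilon):=\sum_{j=0}^l(1+\varepsilon)^j\binom{n-j}{l-j}\varepsilon^{l-j}.
\end{equation*}

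To finish, I would observe that for $\varepsilon\in[0,1]$ the coefficients satisfy $1\leq c_l(\varepsilon)\leq (n+1)2^n$: the lower bound comes from the $j=l$ term $(1+\varepsilon)^l\geq 1$, and the upper bound from the crude estimate $(1+\varepsilon)^j\binom{n-j}{l-j}\varepsilon^{l-j}\leq 2^n$ on each summand. Since $D_l\geq 0$, this yields the two-sided comparison
\begin{equation*}
\sum_{l=0}^n D_l(u,v)\ \leq\ \sum_{l=0}^n c_l(\varepsilon)\,D_l(u,v)\ \leq\ (n+1)2^n\sum_{l=0}^n D_l(u,v),
\end{equation*}
and hence the lemma, after absorbing the constants from Proposition \ref{prop: poor_Pythagorean_S}. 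The only mildly delicate step is the multilinear expansion in the second paragraph, but this is standard via the approximation \eqref{eq: k_approx_measure} and the results in \cite{BEGZ10,DDL2} recalled in Section \ref{sect: preliminaries}; no step presents a genuine obstacle.
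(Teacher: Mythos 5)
Your proof is correct and follows essentially the same route as the paper's: reduce to comparing sums of mixed masses via Lemma~\ref{lem: d_s_monotone} and Proposition~\ref{prop: poor_Pythagorean_S}, then expand by multilinearity and bound the binomial coefficients. The only cosmetic difference is that the paper first reduces to the ordered case $u\leq v$ and then compares the two sums directly, whereas you keep the general case throughout and package everything into $D_l(u,v)$; and where the paper writes the coefficient bound informally as $0\leq c_j=O(\varepsilon)$, you make the needed two-sided bound $1\leq c_l(\varepsilon)\leq (n+1)2^n$ for $\varepsilon\in[0,1]$ explicit, which is the cleaner way to justify that the resulting constant depends only on $n$.
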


\begin{proof} Let us assume that $u \leq v$.   The general case reduces to this particular situation using  Proposition \ref{prop: poor_Pythagorean_S}. Set $u_{\varepsilon}:= u + \varepsilon V_{\theta}$, $v_{\varepsilon}:= v + \varepsilon V_{\theta}$.
Then Lemma \ref{lem: d_s_monotone} gives the following:
\begin{flalign} 
& d_{\mathcal{S}}([u], [v])= \frac{1}{(n+1)} \sum_{j=0}^n \left( \int_X \theta_{v}^j \wedge \theta_{V_{\theta}}^{n-j} - \int_X \theta_{u}^j \wedge \theta_{V_{\theta}}^{n-j}\right ),\label{eq: sheaf 1} \\
 d_{\mathcal{S},(1+\varepsilon)\theta}([u_{\varepsilon}], [v_{\varepsilon}])
& = \frac{1}{n+1} \sum_{j=0}^n \left( \int_X (\theta_{v} + \varepsilon \theta_{V_{\theta}})^j \wedge (1+\varepsilon)^{n-j}\theta_{V_{\theta}}^{n-j} -\int_X (\theta_{u} + \varepsilon \theta_{V_{\theta}})^j \wedge (1+\varepsilon)^{n-j}\theta_{V_{\theta}}^{n-j}\right ) \nonumber \\
& = \frac{1}{n+1} \sum_{j=0}^n  (1+c_j)\left( \int_X \theta_{v}^j \wedge \theta_{V_{\theta}}^{n-j} - \int_X \theta_{u}^j \wedge \theta_{V_{\theta}}^{n-j}\right )\label{eq: sheaf 2}
\end{flalign}
where $0\leq c_j = O(\varepsilon), j=0,...,n$ are positive constants depending only on $n,\varepsilon$.  From \eqref{eq: sheaf 1}, \eqref{eq: sheaf 2}, and the fact that $\int_X \theta_u^j \wedge \theta_{V_{\theta}}^{n-j} \leq \int_X \theta_v^j \wedge \theta_{V_{\theta}}^{n-j}$ (which follows from \cite[Theorem 2.4]{DDL2}) we obtain the desired estimate. 
\end{proof}

\section{Stability of solutions to CMAE with prescribed singularity type}\label{sect: stability}

In this section we show that solutions to a family of complex Monge-Amp\`ere equations with varying singularity type converge as governed by the $d_\mathcal S$-topology:

\begin{theorem} Given $\delta >0$ and $p >1$ suppose that:\\
$\circ$ $[\phi_j],[\phi] \in \mathcal S_\delta(X,\theta), \ j \geq 0$ satisfy $\phi_j = P[\phi_j]$,  $\phi = P[\phi]$ and $d_\mathcal S([\phi_j],[\phi]) \to 0$. \\
$\circ$ $f_j,f \geq 0$ are such that $\| f\|_{L^p},\| f_j\|_{L^p}$, $p>1$, are uniformly bounded  and $f_j \to_{L^1}f$.\\
$\circ$ $\psi_j,\psi \in \textup{PSH}(X,\theta), \ j \geq 0$ satisfy $\sup_X \psi_j=0$, $\sup_X \psi=0$ and 
$$
\begin{cases}
\theta_{\psi_j}^n = f_j \omega^n\\
[\psi_j]=[\phi_j] \  \ \ \ 
\end{cases}  
, \ \ \  
\begin{cases}
\theta_{\psi}^n = f \omega^n\\
[\psi]=[\phi]
\end{cases}.
$$
Then $\psi_j$ converges to $\psi$ in capacity, in particular $\| \psi - \psi_j\|_{L^1} \to 0$.
\end{theorem}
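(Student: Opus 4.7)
The plan is to argue by contradiction and use Theorem~\ref{thm: conv_subs_monotone} to reduce to a monotone-sandwiched convergence of singularity types, then identify the weak limit of a subsequence of $\{\psi_j\}$ via the unique solvability of the complex Monge--Amp\`ere equation with prescribed singularity in relative finite energy classes (\cite{DDL2}).

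Suppose for contradiction that $\psi_j \not\to \psi$ in capacity. Since $\sup_X \psi_j=0$ and the total masses $\int_X f_j\omega^n$ are uniformly bounded (by H\"older and the uniform $L^p$ control), the family $\{\psi_j\}$ is relatively compact in $L^1(X)$, so I extract a subsequence $\psi_{j_k}\to \psi^*$ in $L^1$ and a.e., with $\sup_X \psi^*=0$ and $\psi^*\neq\psi$. Applying Theorem~\ref{thm: conv_subs_monotone} to $\{[\phi_{j_k}]\}$ and passing to a further subsequence, I obtain $W_k,V_k\in\textup{PSH}(X,\theta)$ with $W_k\leq \phi_{j_k}\leq V_k$, $W_k\nearrow\phi$, $V_k\searrow\phi$, and $d_\mathcal{S}([W_k],[\phi]), d_\mathcal{S}([V_k],[\phi])\to 0$, with all types lying in $\mathcal S_{\delta/2}(X,\theta)$ for $k$ large. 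Because $\phi_{j_k}=P[\phi_{j_k}]$ is the maximal non-positive $\theta$-psh potential in its singularity class and $\sup_X\psi_{j_k}=0$, we have $\psi_{j_k}\leq \phi_{j_k}\leq V_k$, and taking the $L^1$-limit gives $\psi^*\leq \phi$.

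To identify $\psi^*$ with $\psi$ I would solve, for each large $k$, the auxiliary equations
\[
\theta_{u_k^{\pm}}^n = a_k^{\pm} f_{j_k}\,\omega^n,\quad [u_k^{+}]=[V_k],\ [u_k^{-}]=[W_k],\quad \sup_X u_k^{\pm}=0,
\]
where $a_k^{\pm}>0$ are the normalizing factors enforcing mass compatibility. By Lemma~\ref{lem: mixed_MA_dC_conv} together with the $L^1$-convergence $f_{j_k}\to f$, one checks $a_k^{\pm}\to 1$. The monotone stability theory developed in \cite{DDL3,DDL4}, applied to the monotone families of model singularity types $V_k\searrow\phi$ and $W_k\nearrow\phi$ with uniformly positive mass and densities converging in $L^1$ to $f$, yields $u_k^{\pm}\to \psi$ in capacity. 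A comparison argument based on the domination principle should then provide constants $\varepsilon_k\to 0$ with $u_k^{-}-\varepsilon_k\leq \psi_{j_k}\leq u_k^{+}+\varepsilon_k$, forcing $\psi^*=\psi$ in the $L^1$-limit and yielding the desired contradiction. The convergence $\|\psi_j-\psi\|_{L^1}\to 0$ is then automatic from capacity convergence plus uniform $L^1$-bounds.

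The main obstacle is the quantitative two-sided sandwich $u_k^{-}-\varepsilon_k\leq \psi_{j_k}\leq u_k^{+}+\varepsilon_k$ with $\varepsilon_k\to 0$. The three potentials $u_k^-, \psi_{j_k}, u_k^+$ satisfy CMAE with essentially the same density (up to the vanishing factors $a_k^{\pm}$) but with three distinct model singularity types $[W_k]\leq [\phi_{j_k}]\leq [V_k]$, and the mere ordering of singularities only yields comparison up to a bounded, not necessarily vanishing, constant. Obtaining a vanishing error likely requires a quantitative version of the domination principle in the relative full mass classes $\mathcal E(X,\theta,V_k)$ and $\mathcal E(X,\theta,W_k)$, with the discrepancy explicitly controlled by the $d_\mathcal{S}$-distances $d_\mathcal{S}([V_k],[\phi_{j_k}])$ and $d_\mathcal{S}([W_k],[\phi_{j_k}])$, both of which tend to zero by construction.
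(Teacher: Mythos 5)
Your overall strategy—sandwich $\psi_{j_k}$ between monotone families of solutions and pass to the limit—shares the paper's broad philosophy, but your realization of it contains a genuine gap at precisely the step you flag as ``the main obstacle.'' You propose to solve auxiliary equations $\theta_{u_k^\pm}^n = a_k^\pm f_{j_k}\omega^n$ with prescribed singularity types $[V_k],[W_k]$ and then deduce $u_k^- - \varepsilon_k \leq \psi_{j_k} \leq u_k^+ + \varepsilon_k$ with $\varepsilon_k\to 0$. Such a bound does not follow from the domination principle in its available form, nor from \cite[Theorem 4.7]{DDL4}, which gives a comparison constant $C$ depending only on $\delta$, $\|f\|_{L^p}$ and not shrinking as $d_\mathcal S([V_k],[\phi_{j_k}])\to 0$. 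A ``quantitative domination principle'' with discrepancy $\varepsilon_k$ controlled by the $d_\mathcal{S}$-distance is not in the literature, and establishing it would be essentially equivalent in difficulty to the theorem itself. So as written, your argument does not close.

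The paper avoids this issue entirely by \emph{not} solving auxiliary equations. Instead it builds the monotone sandwich directly from the $\psi_l$'s: a decreasing sequence $\gamma_j:=\mathrm{usc}(\sup_{k\geq j}\psi_k)\geq\psi_j$ and an increasing sequence $\chi_j:=\lim_k P(\psi_j,\ldots,\psi_{j+k})\leq\psi_j$. The crucial point is that one can estimate the Monge--Amp\`ere measures of $\gamma_j$ and $\chi_j$ from the fact that each $\psi_l$ solves its equation: \cite[Lemma 4.27]{DDL2} gives $\theta_{\gamma_j}^n\geq(\inf_{k\geq j}f_k)\omega^n$, and Lemma \ref{lem: supersolution} (a supersolution property of $P$-envelopes) gives $\theta_{\chi_j}^n \leq e^{\varepsilon\chi_j}(\sup_{l\geq j}h_l)\omega^n$ after rewriting the equations multiplicatively as $\theta_{\psi_l}^n=e^{\varepsilon\psi_l}h_l\omega^n$ with $h_l:=e^{-\varepsilon\psi_l}f_l$. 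The monotone sandwich $w_j-C\leq\chi_j\leq\psi_j\leq\gamma_j\leq v_j$ (with $w_j,v_j$ from Theorem \ref{thm: conv_subs_monotone} and $C$ from \cite[Theorem 4.7]{DDL4}) forces the limits $\chi:=\lim\chi_j$ and $\gamma:=\lim\gamma_j$ to have mass $\int_X\theta_\phi^n$, after which a combination of \cite[Theorem 2.3]{DDL2}, the domination principle, and uniqueness in $\mathcal E(X,\theta,\phi)$ identifies $\chi=\gamma=\psi$. Convergence in capacity of $\psi_j$ is then automatic since it is pinched between two monotone sequences converging to $\psi$. Notice that the constant $C$ in \cite[Theorem 4.7]{DDL4} is never required to vanish: it is only used to guarantee that the potentials $\chi_j^k$ are $\theta$-psh and to control masses, and the identification of limits is done by equations and uniqueness rather than by quantitative pointwise comparison. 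That is the structural move your proposal is missing.

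A secondary issue: you invoke a ``monotone stability theory'' from \cite{DDL3,DDL4} to get $u_k^\pm\to\psi$ in capacity for a monotone family of model singularity types with densities converging in $L^1$. Such a black-box statement is not available in those references in the generality you need (varying singularity type and varying density simultaneously); providing it would again require most of the work of the present theorem. Finally, you would need to replace $V_k$ by $P[V_k]$ for your auxiliary equations to even be solvable, since $V_k=\mathrm{usc}(\sup_{l\geq k}\phi_{j_l})$ need not satisfy $V_k=P[V_k]$; that is a small repair but worth noting.
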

\begin{proof} 
 First we claim that it is enough to show that any subsequence of $\psi_j$ contains a subsequence that converges in capacity to $\psi$.
Indeed, suppose that $\psi_j$ does not converge to $\psi$ in capacity. Then there exists $\varepsilon >0$ such that $\limsup_j \textup{Cap}_\omega(\{ |\psi_j - \psi|> \varepsilon\})> \delta$ for some $\delta >0$. In particular, there exists $j_k \to \infty$ such that $\textup{Cap}_\omega(\{ |\psi_{j_k} - \psi|> \varepsilon\})> \delta$ for all $j_k$. But then $\{\psi_{j_k}\}_k$ would contain a subsequence converging to $\psi$ in capacity, giving a contradiction.

We take a subsequence of $f_j$, again denoted by $f_j$, such that $\|f_j-f_{j+1} \|_{L^1} \leq \frac{1}{2^{j+1}}, \ j \geq 0$. By an elementary argument $g := f_0 + \sum_{j \geq 0} |f_{j+1} - f_j| \in L^1(\omega^n)$ and $f_j,f \leq g$ for all $j \geq 0$.

Now let us take a subsequence of $\phi_j$, again denoted by $\phi_j$, such that there exists $w_j,v_j \in \textup{PSH}(X,\theta)$ increasing/decreasing sequences with $w_j \leq \phi_j \leq v_j$ such that  $d_\mathcal S([w_j],[\phi]) \to 0$ and $d_\mathcal S([v_j],[\phi]) \to 0$. This is possible due to Theorem \ref{thm: conv_subs_monotone}. Moreover we recall that $v_j =\textup{usc}\big( \sup_{k \geq j} \phi_k\big)$, and $w_j$ arises as the decreasing limit $w_j := \lim_{k} w_j^k$, where $w_j^k := P(\phi_j,\phi_{j+1},\ldots,\phi_{j+k})$ (see \eqref{eq: v_w_j_k_formula}).

We consider $\gamma_j := \textup{usc}\big(\sup_{k \geq j} \psi_k \big)\geq\psi_j$. Observe that $\sup_X \gamma_j=0$, $j\geq 0$. For this sequence  \cite[Lemma 4.27]{DDL2} gives that $\theta_{\gamma_j}^n \geq \big(\inf_{k \geq j} f_k\big) \omega^n$. 

Since $[\phi_j],[\phi] \in \mathcal S_\delta(X,\theta)$,  \cite[Theorem 4.7]{DDL4} gives existence of $C>0$ such that
\begin{equation}\label{ineq sandwich}
\phi-C \leq \psi \leq \phi \ \ \textup{ and } \ \ \phi_j-C \leq \psi_j \leq \phi_j, \ \ j \geq 0.
\end{equation}
In particular, we have that $w_j - C \leq \gamma_j \leq v_j$ for all $j \geq 0$. Hence the  monotonicity of the sequences $w_j$ and $v_j$ implies that
$$w_j - C \leq \gamma:= \lim_k \gamma_k \leq v_j, \ j \geq 0.$$
Letting $j \to \infty$, we obtain that 
$$\int_X \theta_\phi^n= \lim_{j\to \infty} \int_X \theta_{w_j}^n \leq  \int_X \theta_\gamma^n \leq \lim_{j\to \infty} \int_X \theta_{v_j}^n=\int_X \theta_\phi^n.$$
Consequently,  the conditions of \cite[Theorem 2.3]{DDL2} hold for the decreasing sequence $\{ \gamma_j\}_j$, yielding the estimate $\theta_\gamma^n \geq f \omega^n$. By comparing total masses again, we conclude that in fact $\theta_\gamma^n = f \omega^n.$ By uniqueness of solutions in $\mathcal E(X,\theta,\phi)$ (\cite[Theorem 4.29]{DDL2}), and noting that $\sup_X \gamma =\sup_X \psi =0$, we obtain that $\gamma = \psi$.

This also shows that $\psi_j$ converges in $L^1$ (and a.e.) to $\psi$. Indeed since $\sup_X \psi_j =0$ we can assume that, up to extracting, $\psi_j$ converges to some $\psi_{\infty}$ in $L^1$ and a.e.. Then (by construction) $\gamma_j$ also does converge to $\psi_{\infty}$. But the limit of $\gamma_j$ is $\gamma=\psi$. 

We fix $r \in (1,p)$. By our assumptions on the $f_j,f$,  and the H\"older inequality we obtain that $f_j \to f$ in $L^r$. 
Let $s>1$ be the conjugate exponent of $r$, i.e. $1/s +1/r =1$.  Take $\varepsilon>0$ so small that $e^{-\varepsilon \psi_j} \to e^{-\varepsilon \psi}$ in $L^{s}(X,\omega^n)$ and consequently $\sup_j \|e^{-\varepsilon \psi_j}\|_{L^s(X,\omega^n)} <+\infty$. 
This is possible as we explain below. For $x= -\varepsilon\psi_j, y =-\varepsilon\psi$ we have that $x,y\geq 0$ and an elementary  argument gives 
$$
|e^x -e^y|^s \leq e^{s(x+y)} |x-y|^s. 
$$
Thus, after applying H\"older's inequality twice, we obtain
\begin{flalign*}
\int_X |e^{-\varepsilon\psi_j} -e^{-\varepsilon\psi}|^s \omega^n & \leq \varepsilon^s  \int_X e^{-s\varepsilon(\psi_j+\psi)} |\psi_j-\psi|^s \leq \varepsilon^s  \left(\int_X e^{-2s\varepsilon(\psi_j+\psi)}  \omega^n\right)^{1/2} \left(\int_X |\psi_j-\psi|^{2s} \omega^n\right)^{1/2} \\
&\leq  \varepsilon^s  \left(\int_X e^{-4s\varepsilon\psi_j}  \omega^n\right)^{1/4} \left(\int_X e^{-4s\varepsilon\psi}  \omega^n\right)^{1/4} \left(\int_X |\psi_j-\psi|^{2s} \omega^n\right)^{1/2}. 
\end{flalign*}
The convergence statement for $e^{-\varepsilon\psi_j}$ then follows because $\psi_j$ converges to $\psi$ in any $L^t$, $t>1$, while, since $\sup_X \psi_j= \sup_X \psi=0$, Skoda's uniform theorem (\cite{Zer01},\cite[Theorem 2.50]{GZ17}) ensures that both $e^{-4s \varepsilon \psi_j}$ and $e^{-4s \varepsilon \psi}$ are uniformly bounded in $L^1$ for $\varepsilon>0$ small enough.

Now set $h_j:= e^{-\varepsilon \psi_j} f_j, h:= e^{-\varepsilon \psi} f$. We have
$$
\int_X |h_j-h| \omega^n \leq \int_X e^{-\varepsilon \psi_j} |f_j-f| \omega^n +  \int_X |e^{-\varepsilon \psi_j}-e^{-\varepsilon \psi}| f \omega^n. 
$$
Applying H\"older's inequality with exponents $r$ and $s$ we conclude that $\|h_j-h\|_{L^{1}} \to 0$. 
Up to extracting again we can assume that $h_j,h \leq \tilde{g}$ where $\tilde{g}\in L^1(X,\omega^n)$ is constructed exactly as the function $g$ at the beginning of the proof. 

From \eqref{ineq sandwich} we have
$$w_j^k - C \leq \chi_j^k := P(\psi_j,\psi_{j+1}, \ldots, \psi_{j+k}) \leq w^k_j,$$
giving that $\chi_j^k$ is a  $\theta$-psh function. Observe then that the Monge-Amp\`ere equation for $\psi_j$ rewrites as
$\theta_{\psi_j}^n = e^{\varepsilon \psi_j} h_j \omega^n.
$ Thus, Lemma \ref{lem: supersolution} below gives
$$
\theta_{\chi_j^k}^n \leq e^{\varepsilon \chi_j^k}\big(\sup_{l \geq j} h_l\big)  \omega^n. 
$$
From the first statement of \cite[Theorem 2.3]{DDL2} we have 
$$\theta_{\chi_j}^n \leq \liminf_k \theta_{\chi_j^k}^n \leq  e^{\varepsilon \chi_j}  \big(\sup_{l \geq j} h_l\big)  \omega^n, $$
where $\chi_j := \lim_k \searrow \chi_j^k$. Also $w_j - C \leq \chi_j  \leq w_j$.
Now we argue that the increasing limit $\chi:= \lim_j \chi_j = \psi$. Indeed, we can apply \cite[Theorem 2.3]{DDL2} and the dominated convergence theorem to conclude that 
\begin{equation}\label{eq: supersolution stability}
\theta_\chi^n \leq e^{\varepsilon \chi} h \,\omega^n = e^{\varepsilon(\chi -\psi)} f\omega^n.
\end{equation}
On the other hand, \cite[Theorem 1.1 and Theorem 2.3]{DDL2} together with Lemma \ref{lem: mixed_MA_dC_conv} give that
$$\int_X \theta_\chi^n = \lim_j \int_X \theta_{\chi_j}^n = \lim_j \int_X \theta_{w_j}^n = \int_X \theta_\phi^n=\int_X f \omega^n,$$
hence $\chi \in \mathcal{E}(X,\theta, \phi)$. Recall that we also have $\psi \in \mathcal{E}(X,\theta,\phi)$. 
By the comparison principle, \cite[Corollary 3.16]{DDL2}, and \eqref{eq: supersolution stability}, we have 
$$
\int_{\{\chi<\psi\}} \theta_\psi^n \leq \int_{\{\chi<\psi\}} \theta_\chi^n \leq \int_{\{\chi<\psi\}}  e^{\varepsilon(\chi- \psi)} f\omega^n = \int_{\{\chi<\psi\}}  e^{\varepsilon(\chi- \psi)} \theta_{\psi}^n \leq \int_{\{\chi<\psi\}}  \theta_{\psi}^n.
$$
It then follows that  all the above inequalities become equalities,  and $\theta_{\psi}^n(\chi<\psi)=0$. Therefore, all  terms in the above are zero. In particular $\theta_{\chi}^n(\chi<\psi)=0$, and by the domination principle, \cite[Proposition 3.11]{DDL2}, we have that $\chi \geq \psi$.

On the other side, by construction of $\chi_j$ and $\gamma_j$ we have that $\chi_j \leq \gamma_j$, and so $\chi \leq \gamma =\psi$ finally giving $\chi =\psi$.

To summarize, we proved existence of two monotone sequences $\chi_j,\gamma_j$ such that $\chi_j \leq \psi_j \leq \gamma_j$ with $\gamma_j$ decreasing to $\psi$ and $\chi_j$ increasing to $\psi$. In particular $\chi_j$ and $\gamma_j$ converge in capacity to $\psi$ (\cite[Proposition 4.25]{GZ17}). This implies that $\psi_j$ converges to $\psi$ in capacity, finishing the proof.
\end{proof}

\begin{lemma}\label{lem: supersolution}
Assume that $u,v,P(u,v) \in \PSH(X,\theta)$, and $\mu$ is a positive non pluripolar measure, $\varepsilon>0$, $0\leq f,g \in L^1(\mu)$. If $\theta_u^n \leq e^{\varepsilon u} f\mu$, $\theta_v^n \leq e^{\varepsilon v} g\mu$, then 
$$
\theta_{P(u,v)}^n \leq e^{\varepsilon P(u,v)} \max(f,g) \mu. 
$$
\end{lemma}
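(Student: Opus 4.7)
The plan is to leverage two standard facts about the rooftop envelope $P(u,v)$: fact (a), that $\theta_{P(u,v)}^n$ is concentrated on the contact set $\{P(u,v)=u\}\cup\{P(u,v)=v\}$; and fact (b), that $\mathbbm{1}_{\{P(u,v)=u\}}\theta_{P(u,v)}^n\leq \mathbbm{1}_{\{P(u,v)=u\}}\theta_u^n$ together with its symmetric counterpart for $v$. Fact (b) is an immediate application of Lemma~\ref{lem: concentration max} to the inequality $P(u,v)\leq u$. Fact (a) is the big class analog of the classical balayage statement for obstacle problems; although $\min(u,v)$ is only quasi-psh (not $\theta$-psh), one establishes it along the lines of Lemma~\ref{lem: basic MA inequality}, by approximating $u,v$ from above by smooth $\omega$-psh functions (using \cite{BK07,Dem92}), performing classical balayage on the approximating Perron envelopes, and passing to the limit via Theorem~\ref{thm: lsc of non pluripolar product} and quasi-continuity of the test functions. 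This parallels the use of \cite[Lemma~3.7]{DDL2} in the proof of Theorem~\ref{thm: max and envelope}.

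Given (a) and (b), I would split the contact set into the \emph{disjoint} pieces $A:=\{P(u,v)=u\}$ and $B:=\{P(u,v)=v\}\setminus A$, which by (a) and (b) yields
\[
\theta_{P(u,v)}^n \;\leq\; \mathbbm{1}_A\,\theta_u^n \;+\; \mathbbm{1}_B\,\theta_v^n.
\]
On $A$ the identity $u=P(u,v)$ converts the hypothesis $\theta_u^n\leq e^{\varepsilon u}f\mu$ into
\[
\mathbbm{1}_A\,\theta_u^n \;\leq\; \mathbbm{1}_A\,e^{\varepsilon u}f\,\mu \;=\; \mathbbm{1}_A\,e^{\varepsilon P(u,v)}f\,\mu \;\leq\; \mathbbm{1}_A\,e^{\varepsilon P(u,v)}\max(f,g)\,\mu,
\]
and the parallel computation on $B$ gives $\mathbbm{1}_B\,\theta_v^n \leq \mathbbm{1}_B\,e^{\varepsilon P(u,v)}\max(f,g)\,\mu$. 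Summing these two bounds and using the disjointness $\mathbbm{1}_A+\mathbbm{1}_B\leq 1$ closes the argument.

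The one delicate point, and the step I expect to be the main obstacle in a rigorous write-up, is fact (a): besides the approximation scheme to even make sense of ``balayage'' in the big class, one must take care to obtain the \emph{disjoint} decomposition so as to avoid the factor $2$ overcount that would result from the naive inequality $\theta_{P(u,v)}^n \leq \mathbbm{1}_{\{P(u,v)=u\}}\theta_u^n + \mathbbm{1}_{\{P(u,v)=v\}}\theta_v^n$ on the overlap $\{u=v=P(u,v)\}$. Reassigning the overlap to a single indicator using fact (b) is the key observation; apart from this bookkeeping, the rest of the proof is a one-line computation.
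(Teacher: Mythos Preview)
Your proposal is correct and reaches the same conclusion, but the route differs from the paper's in exactly the place you flagged as delicate. The paper does \emph{not} attempt a disjoint decomposition of the contact set. Instead, it perturbs: for generic $r>0$ one has $\mu(\{u=v+r\})=0$ (monotone functions have at most countably many jumps), so setting $\varphi_r:=P(u,v+r)$ and applying \cite[Lemma~3.7]{DDL2} directly gives
\[
\theta_{\varphi_r}^n \leq \mathbbm{1}_{\{\varphi_r=u\}}\theta_u^n + \mathbbm{1}_{\{\varphi_r=v+r\}}\theta_v^n,
\]
and the overlap $\{\varphi_r=u\}\cap\{\varphi_r=v+r\}\subset\{u=v+r\}$ is $\mu$-null, so the two indicators sum to at most $1$ $\mu$-a.e. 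One then lets $r\searrow 0$ via Theorem~\ref{thm: lsc of non pluripolar product}. Your approach instead takes the non-disjoint estimate from \cite[Lemma~3.7]{DDL2} (your facts (a)+(b) combined), and refines it by writing $A=\{P(u,v)=u\}$, $B=\{P(u,v)=v\}\setminus A$ and applying Lemma~\ref{lem: concentration max} on each piece separately; this sidesteps the limiting argument entirely. Your route is more direct and avoids the $r\to 0$ passage; the paper's route has the virtue of using \cite[Lemma~3.7]{DDL2} as a black box without needing to revisit its ingredients. Note also that your fact~(a) does not need a fresh approximation argument: it is already contained in \cite[Lemma~3.7]{DDL2}, so you can simply cite that and skip the balayage sketch.
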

\begin{proof}

By replacing $\mu$ with $\id_{X\setminus P}\mu$, where $P:= \{u=v=-\infty\}$, we can assume that $\mu(P)=0$. Since $\mu(X)<+\infty$, the function $r \to  \mu(\{u \leq v+r \})$ is monotone increasing. Such functions have at most a countable number of discontinuities, hence for almost every $r \geq 0$ we have that $\mu(\{u=v+r\})=0$. 
For such $r$ we set $\varphi_r:=P_{\theta}(u,v+r)$, and note that $\varphi_r \searrow P_\theta(u,v)$ as $r\rightarrow 0$.  
It then follows from \cite[Lemma 3.7]{DDL2} that we can write 
\begin{flalign*}
	\theta_{\varphi_r}^n & \leq \mathbbm{1}_{\{\varphi_r=u\}} \theta_u^n+\mathbbm{1}_{\{\varphi_r=v+r\}}\theta_v^n  \leq  \mathbbm{1}_{\{\varphi_r=u\}} e^{\varepsilon u} f \mu  +\mathbbm{1}_{\{\varphi_r=v+r\}} e^{\varepsilon v} g \mu\\
	& \leq  \mathbbm{1}_{\{\varphi_r=u\}} e^{\varepsilon \varphi_r} \max(f,g) \mu  +\mathbbm{1}_{\{\varphi_r=v+r\}} e^{\varepsilon \varphi_r } \max(f,g) \mu \leq e^{\varepsilon \varphi_r} \max(f,g) \mu,
	\end{flalign*}
 where in the last inequality we used the fact that $\mu(\{u=v+r\})=0$. Letting $r\searrow 0$, we use \cite[Theorem 2.3]{DDL2} to arrive at the conclusion. 
\end{proof}

\let\omegaLDthebibliography\thebibliography 
\renewcommand\thebibliography[1]
{
  \omegaLDthebibliography{#1}
  \setlength{\parskip}{1pt}
  \setlength{\itemsep}{1pt plus 0.3ex}
}

\noindent{\sc University of Maryland}\\
{\tt tdarvas@math.umd.edu}\vspace{0.1in}\\
\noindent{\sc Sorbonne Universit\'e}\\
{\tt eleonora.dinezza@imj-prg.fr}\vspace{0.1in}\\
\noindent {\sc Universit\'e Paris-Sud}\\
{\tt hoang-chinh.lu@u-psud.fr}

\begin{thebibliography}{0}
\begin{footnotesize}

\bibitem[BT76]{BT76} E.~Bedford, B.A.~Taylor, The Dirichlet problem for a complex Monge-Amp\`ere equation,  Invent. Math.  37  (1976), no. 1, 1--44. 

\bibitem[BT82]{BT82} E.~Bedford, B.A.~Taylor, { A new capacity for plurisubharmonic functions},  Acta Math.  149  (1982), 1--40.
 
\bibitem[BT87]{BT87} E. Bedford,  B.A. Taylor, Fine topology, Silov boundary, and $(i\ddbar)^n$, J. Funct. Anal.  72  (1987),  no. 2, 225--251. 


\bibitem[Ber18]{Ber18} R.J. Berman, From Monge-Amp\`ere equations to envelopes and geodesic rays in the zero temperature limit,  Math. Z., 2019, 291, 1--2, 365--394. 

\bibitem[BBJ18]{BBJ18} R.J.~Berman, S.~Boucksom, M. Jonsson,  A variational approach to the Yau--Tian--Donaldson conjecture, arXiv:1509.04561.



\bibitem[BDL15]{BDL15} R.J. Berman, T. Darvas, C.H. Lu, 
Convexity of the extended K-energy and the long time behaviour of the Calabi flow,  Geometry \& Topology, 21 (2017) 2945--2988.

\bibitem[Bern15]{Bern15} B. Berndtsson, The openness conjecture and complex Brunn-Minkowski inequalities, Complex geometry and dynamics, 29--44, Abel Symp., 10, Springer, Cham, 2015.

\bibitem[Bl1]{bl} Z. Blocki, The complex Monge-Amp\`ere operator in pluripotential theory, lecture notes, \href{http://gamma.im.uj.edu.pl/~blocki/publ/ln/wykl.pdf}{http://gamma.im.uj.edu.pl/\~{}blocki/publ/ln/wykl.pdf}.


\bibitem[BK07]{BK07} Z. Blocki, S. Ko{\l}odziej, On regularization of plurisubharmonic functions on manifolds, Proc. Amer. Math. Soc., 2007, Vol 135, 7, 2089--2093. 
\bibitem[Bo02]{Bo02} S. Boucksom, On the volume of a line bundle. Internat. J. Math. 13 (2002), no. 10, 1043--1063.

\bibitem[Bo04]{Bo04} S. Boucksom, Divisorial Zariski decompositions on compact complex manifolds, Ann. Sci. \'Ecole Norm. Sup. (4) 37 (2004), no. 1, 45--76. 

\bibitem[BEGZ10]{BEGZ10} S.~Boucksom, P.~Eyssidieux, V.~Guedj, A.~Zeriahi, Monge-Amp\`ere equations in big cohomology classes, Acta Math. (2010), Volume 205, Issue 2, pp 199--262.

\bibitem[Ca14]{Ca14} J. Cao, Numerical dimension and a Kawamata-Viehweg-Nadel-type vanishing theorem on compact K\"ahler manifolds. Compos. Math. 150 (2014), no. 11, 1869--1902.


\bibitem[Dar13]{Dar13} T. Darvas, Weak geodesic rays in the space of K\"ahler potentials and the class $\mathcal E(X,\omega)$, J. Inst. Math. Jussieu 16 (2017), no. 4, 837--858.



\bibitem[Dar15]{Dar15} T. Darvas, The Mabuchi geometry of finite energy classes, Adv. Math. 285 (2015), 182--219. 

\bibitem[DDL1]{DDL1} T.~Darvas, E.~Di Nezza, C.H.~Lu, On the singularity type of full mass currents in big cohomology classes,  Compos. Math. 154 (2018), no. 2, 380--409.
\bibitem[DDL2]{DDL2} T.~Darvas, E.~Di Nezza, C.H.~Lu, Monotonicity of non-pluripolar products and complex Monge-Ampere equations with prescribed singularity, Analysis \& PDE, Vol. 11 (2018), No. 8, 2049--2087.

\bibitem[DDL3]{DDL3} T.~Darvas, E.~Di Nezza, C.H.~Lu, $L^1$ metric geometry of big cohomology classes, Annales de L'Institut Fourier, 68, no 7 (2018), p. 3053--3086.

\bibitem[DDL4]{DDL4} T.~Darvas, E.~Di Nezza, C.H.~Lu, Log-concavity of volume and complex Monge-Amp\`ere equations with prescribed singularity,  arXiv:1807.00276.

\bibitem[DL18]{DL18} T. Darvas, C.H. Lu, Geodesic stability, the space of rays, and uniform convexity in Mabuchi geometry,  arXiv:1810.04661.

\bibitem[DK01]{DK01} J.-P. Demailly, J. Kollar, Semi-continuity of complex singularity exponents and K\"ahler-Einstein metrics on Fano orbifolds, Ann. Sci. \'Ecole Norm . Sup.
34 (2001), 525--556.

\bibitem[Dem92]{Dem92} J.-P. Demailly, Regularization of closed positive currents and intersection theory, J. Algebraic Geom. 1 (1992) 361--409.

\bibitem[Dem15]{Dem15} J.-P. Demailly, On the cohomology of pseudoeffective line bundles. Complex geometry and dynamics, 51--99, Abel Symp., 10, Springer, Cham, 2015.

\bibitem[DP04]{DP04} J.-P. Demailly, M. Paun, Numerical characterization of the K\"ahler cone of a compact K\"ahler manifold. Ann. of Math. (2) 159 (2004), no. 3, 1247--1274.


\bibitem[DPS94]{DPS94} J.-P. Demailly, T.  Peternell, M. Schneider, Compact complex manifolds with numerically effective tangent bundles,  J. Algebraic Geom., 3 (1994), 295--345.

\bibitem[GZh15]{GZh15} Q. Guan, X. Zhou,  A proof of Demailly's strong openness conjecture, Ann. of Math. (2) 182 (2015), no. 2, 605--616.
\bibitem[GZh16]{GZh16} Q. Guan,  X. Zhou,  Effectiveness of Demailly's strong openness conjecture and related problems. Invent. Math. 202 (2015), no. 2, 635--676. 

\bibitem [GZ17] {GZ17} V.~Guedj, A.~Zeriahi, Degenerate Complex Monge-Amp\`ere Equations, EMS Tracts in Mathematics, vol. 26, (2017).

\bibitem[GLZ17]{GLZ17} V. Guedj, C.H. Lu, A. Zeriahi, {Plurisubharmonic envelopes and supersolutions}, arXiv:1703.05254. 



\bibitem[Har77]{Har77} R. Hartshorne, Algebraic Geometry, Springer Science and Business Media, (1977).

\bibitem[Hiep14]{Hiep14} P.H. Hiep, The weighted log canonical threshold. C. R. Math. Acad. Sci. Paris 352 (2014), no. 4, 283--288.

\bibitem[Le17]{Le17} L. Lempert, Modules of square integrable holomorphic germs, Analysis meets geometry, 311--333, Trends Math., Birkhäuser/Springer, Cham, 2017.

\bibitem[RWN14]{RWN14} J. Ross, D. Witt Nystr\"om, Analytic test configurations and geodesic rays, Journal of Symplectic Geometry Volume 12, Number 1 (2014), 125--169.

\bibitem[Xia19]{Xia19} M. Xia, Mabuchi geometry of big cohomology classes with prescribed singularities, 	arXiv:1907.07234.

\bibitem[WN19]{WN19} D. Witt-Nystr\"om, Monotonicity of non-pluripolar Monge-Amp\`ere measures, Indiana University Mathematics Journal, 2019, 68, 2, 579--591. 

\bibitem [Zer01]{Zer01} A.~Zeriahi, Volume and capacity of sublevel sets of a Lelong class of psh functions.  Indiana Univ. Math. J.   50  (2001),  no. 1, 671--703.



\end{footnotesize}
\end{thebibliography}
\end{document}